\pgfplotsset{compat=newest}
\newlist{condenum}{enumerate}{1} % 'condenum': a new, enumerate-like list env.
\setlist[condenum]{label=\bfseries Condition \arabic*.,
                   ref=\arabic*, wide}
\numberwithin{equation}{section}
\newtheorem{theorem}{Theorem}[section]
\theoremstyle{plain}
\newtheorem{corollary}[theorem]{Corollary}
\newtheorem{lemma}[theorem]{Lemma}
\newtheorem{proposition}[theorem]{Proposition}
\theoremstyle{definition}
\newtheorem{remark}[theorem]{Remark}
\newtheorem{assumption}[theorem]{Assumption}
\newcommand{\isset}[1]{\mathbb{#1}} % sets
\newcommand{\ismat}[1]{\mathbf{#1}} % matrices
\newcommand{\isvec}[1]{\mathbf{#1}} % vectors
\newcommand{\genf}[1]{\mathcal{#1}} % generating functions
\newcommand{\arrowset}{\isset{X}} % the arrow set (all words)
\newcommand{\objectset}{\isset{O}} % the object set (windows)
\newcommand{\groupoid}{\isset{G}} % the groupoid
\newcommand{\genset}[1]{\isset{A}_{#1}} % generating set
\newcommand{\source}{\ensuremath{\mathfrak{s}}} % groupoid source
\newcommand{\target}{\ensuremath{\mathfrak{t}}} % groupoid target
\newcommand{\matinds}[3]{{#1}_{#2, #3}} % M_{i, j} for matrix
\newcommand{\matindsbar}[3]{\overline{{#1}}_{#2, #3}} % \overline{M}_{i, j} for matrix
\newcommand{\vecinds}[2]{{#1}_#2} % v_{i} for vector
\newcommand{\vecindstilde}[2]{\tilde{{#1}}_#2} % \tilde{v}_{i} for vector
\newcommand{\metric}{L} % label on general metric, i.e. | |_{\metric}
\newcommand{\metrictwo}{F} % label on another metric, used in example section
\newcommand{\defn}{\coloneqq} % definition (note that \coloneqq aligns the := correctly)
\newcommand{\st}{\,|\,}  % such that
\newcommand{\indic}{\mathbb{I}} % indicator function
\newcommand{\transpose}{\ensuremath{^T}} % transpose
\newcommand{\idmat}{\ismat{I}} % identity matrix
\newcommand{\zeromat}{\ismat{0}} % zero matrix
\newcommand{\onevec}{\ensuremath{\isvec{1}}} % (1, 1, 1 ...)
\newcommand{\zerovec}{\ensuremath{\isvec{0}}} % (0, 0, 0 ...)
\let\oldleft\left
\let\oldright\right
\renewcommand{\left}{\mathopen{}\mathclose\bgroup\oldleft}
\renewcommand{\right}{\aftergroup\egroup\oldright}
\begin{document}

\title{On a random entanglement problem}

\author{Gage Bonner}
\email{gbonner@wisc.edu}

\author{Jean-Luc Thiffeault}
\email{jeanluc@math.wisc.edu}

\author{Benedek Valk{\'o}}
\email{valko@math.wisc.edu}

\address{Department of Mathematics, University of Wisconsin --
	Madison, WI 53706, USA}

%\affiliation{Department of Mathematics, University of Wisconsin-Madison, Madison, WI 53706, USA}

\date{}

\begin{abstract}
We study a model for the entanglement of a two-dimensional reflecting Brownian motion in a bounded region divided into two halves by a wall with three or more small windows. We map the Brownian motion into a Markov Chain on the fundamental groupoid of the region. We quantify entanglement of the path with the length of the appropriate element in this groupoid. Our main results are a law of large numbers and a central limit theorem for this quantity. The constants appearing in the limit theorems are expressed in terms of a coupled system of quadratic equations.
\end{abstract}

%\keywords{Brownian motion, fundamental groupoid, winding, entanglement, Markov chain}

\maketitle

% \tableofcontents

\section{Motivation}
\label{sec:motivation}

We consider a reflecting Brownian motion in a piecewise smooth bounded region of $\isset{R}^2$. This region is divided into a top and bottom half by a wall punctured by $N \geq 3$ small windows, as shown in Figure~\ref{fig:4_window_arena_diagram}.

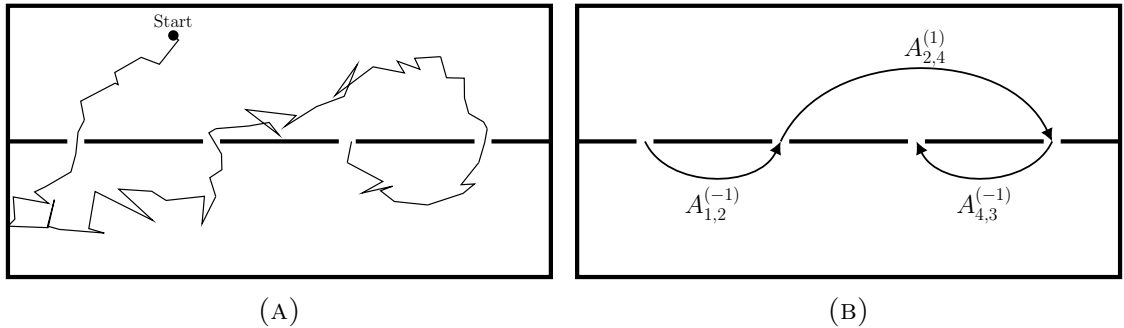
\begin{figure}[ht]
     \centering
     \begin{subfigure}[t]{0.45\linewidth}
        \centering
        \resizebox{\linewidth}{!}{
        \begin{tikzpicture}
        \pgfmathsetseed{3}

        % box
        \coordinate (TL) at (-64mm,32mm);
        \coordinate (TR) at (64mm,32mm);
        \coordinate (BL) at (-64mm,-32mm);
        \coordinate (BR) at (64mm,-32mm);

        \draw[black, line width=1mm] (BL) rectangle (TR);

        % windows
        \coordinate (ML) at (-64mm,0mm);
        \coordinate (MR) at (64mm,0mm);
        \coordinate (w1L) at (-50mm,0mm);
        \coordinate (w1R) at (-46mm,0mm);
        \coordinate (w2L) at (-18mm,0mm);
        \coordinate (w2R) at (-14mm,0mm);
        \coordinate (w3L) at (14mm,0mm);
        \coordinate (w3R) at (18mm,0mm);
        \coordinate (w4L) at (46mm,0mm);
        \coordinate (w4R) at (50mm,0mm);

        \draw[-, black, line width=1mm] (ML) to (w1L);
        \draw[-, black, line width=1mm] (w1R) to (w2L);
        \draw[-, black, line width=1mm] (w2R) to (w3L);
        \draw[-, black, line width=1mm] (w3R) to (w4L);
        \draw[-, black, line width=1mm] (w4R) to (MR);

        % particle path
        \node[] at (-25mm, 28.5mm) {Start};
        \node[circle, fill=black,inner sep=0pt,minimum size=7pt] at (-25mm, 25mm) {};

        \draw[-, black, thick, decoration={random steps, amplitude=4mm}, decorate] (-25mm, 25mm) to (-46mm, 5mm);
        \draw[-, black, thick, decoration={random steps, amplitude=2mm}, decorate]  (-46mm, 5mm) to (-48mm, -3mm);
        \draw[-, black, thick, decoration={random steps, amplitude=4mm}, decorate] (-48mm, -3mm) to (-64mm, -20mm);
        \draw[-, black, thick, decoration={random steps, amplitude=6mm}, decorate] (-64mm, -20mm) to (-15mm, -13mm);
        \draw[-, black, thick, decoration={random steps, amplitude=2mm}, decorate] (-15mm, -13mm) to (-15mm, 3mm);
        \draw[-, black, thick, decoration={random steps, amplitude=6mm}, decorate] (-15mm, 3mm) to (38mm, 20mm);
        \draw[-, black, thick, decoration={random steps, amplitude=2mm}, decorate] (38mm, 20mm) to (49mm, 3mm);
        \draw[-, black, thick, decoration={random steps, amplitude=2mm}, decorate] (49mm, 3mm) to (47mm, -5mm);
        \draw[-, black, thick, decoration={random steps, amplitude=4mm}, decorate] (47mm, -5mm) to (35mm, -15mm);
        \draw[-, black, thick, decoration={random steps, amplitude=6mm}, decorate] (35mm, -15mm) to (16mm, -5mm);
        \draw[-, black, thick, decoration={random steps, amplitude=4mm}, decorate] (16mm, -5mm) to (17mm, 0mm);
        \end{tikzpicture}
        }
        \caption{}
        \label{fig:path}
     \end{subfigure}
    %  \hfill
     \begin{subfigure}[t]{0.45\linewidth}
        \centering
        \resizebox{\linewidth}{!}{
        \begin{tikzpicture}
        \pgfmathsetseed{3}

        % box
        \coordinate (TL) at (-64mm,32mm);
        \coordinate (TR) at (64mm,32mm);
        \coordinate (BL) at (-64mm,-32mm);
        \coordinate (BR) at (64mm,-32mm);

        \draw[black, line width=1mm] (BL) rectangle (TR);

        % windows
        \coordinate (ML) at (-64mm,0mm);
        \coordinate (MR) at (64mm,0mm);
        \coordinate (w1L) at (-50mm,0mm);
        \coordinate (w1M) at (-48mm,0mm);
        \coordinate (w1R) at (-46mm,0mm);
        \coordinate (w2L) at (-18mm,0mm);
        \coordinate (w2M) at (-16mm,0mm);
        \coordinate (w2R) at (-14mm,0mm);
        \coordinate (w3L) at (14mm,0mm);
        \coordinate (w3M) at (16mm,0mm);
        \coordinate (w3R) at (18mm,0mm);
        \coordinate (w4L) at (46mm,0mm);
        \coordinate (w4M) at (48mm,0mm);
        \coordinate (w4R) at (50mm,0mm);

        \draw[-, black, line width=1mm] (ML) to (w1L);
        \draw[-, black, line width=1mm] (w1R) to (w2L);
        \draw[-, black, line width=1mm] (w2R) to (w3L);
        \draw[-, black, line width=1mm] (w3R) to (w4L);
        \draw[-, black, line width=1mm] (w4R) to (MR);

        % particle path
        % \node[circle, fill=black,inner sep=0pt,minimum size=7pt] at (-25mm, 25mm) {};

        \draw[-{Latex[length=3mm, width=3mm]}, black, very thick] (w1M) to[out=-65,in=-115] (w2M);
        \draw[-{Latex[length=3mm, width=3mm]}, black, very thick] (w2M) to[out=65,in=115] (w4M);
        \draw[-{Latex[length=3mm, width=3mm]}, black, very thick] (w4M) to[out=-115,in=-65] (w3M);

        \node[] at (-32mm, -15mm) {\Large $A_{1, 2}^{(-1)}$};
        \node[] at (18mm, 22mm) {\Large $A_{2, 4}^{(1)}$};
        \node[] at (32mm, -15mm) {\Large $A_{4, 3}^{(-1)}$};

        \end{tikzpicture}
        }
        \caption{}
        \label{fig:arcs}
     \end{subfigure}

    \caption{(A) A planar domain with $N= 4$ windows and a sample Brownian path.  (B) The same path expressed as arcs labeled with their associated generating elements. (See Section \ref{sec:math_setup}.) }
    \label{fig:4_window_arena_diagram}
\end{figure}

The Brownian path winds around the wall segments through the windows, and becomes progressively more entangled. The entanglement can be quantified by mapping the path at a given time to an element of the fundamental groupoid~\cite{brown2006topology} of the region, and considering the length of that element in the appropriate sense (see Section \ref{sec:math_setup}). Our goal is to study the asymptotic growth of this length as a function of time. The growth rate of words in the groupoid serves as an indication for the nature of the growth one would expect to see in the winding problem. Motivated by random walks on free groups~\cite{Guivarch1980, Sawyer1987, Lalley1993} one expects the length in the fundamental groupoid to grow linearly in time. In the present paper we identify formulas for both the growth rate and the limiting fluctuations around the mean, in the setting involving \emph{small windows}. Our main contribution (Theorem~\ref{thm:main}) is the proof of these limit theorems in a general setting.  The limits are described in terms of a set of coupled quadratic equations, which can be readily solved numerically.

One can follow the entanglement of the Brownian path by observing it at times when it visits a new window, and considering the length of the corresponding element in the fundamental groupoid.  This fundamental groupoid is generated by equivalence classes of oriented paths connecting two windows, with each such path lying in the upper or lower half of the plane. Between successive observations, the groupoid element corresponding to the path is appended by a random generating element whose distribution depends on the location inside the window. Motivated by the narrow escape problem~\cite{HolcmanSchuss}, as the windows shrink in size this location dependence disappears, and we arrive at a Markov chain on the fundamental groupoid. Our limit theorems are about the length of the groupoid element in this Markov chain.

Probabilistic winding problems on surfaces have a long history.  A classical example is the asymptotic behavior of the winding of a planar Brownian motion around a point.  Spitzer~\cite{Spitzer1958} showed that the  winding angle at time $t$, scaled by $\log t$, converges to a standard Cauchy distribution as time goes to infinity.  The fact that the limit distribution has no  moments  can be explained by  the large amount of winding that the Brownian path can pick up when it comes near the origin.  This model has been thoroughly investigated by many authors~\cite{Messulam1982, Pitman1984, Pitman1986, Pitman1989}.

When using Brownian motion to model, say, polymer entanglement~\cite{Grosberg2003}, it is more realistic to regularize the problem in some way.  This can be accomplished, for example, by replacing the punctual winding center by a finite topological disk~\cite{Grosberg2003, Geng2018_preprint}, by adding a persistence length to the motion~\cite{Tanaka2012}, or by considering a random walk instead~\cite{Rudnick1987, Rudnick1988, Berger1987, Berger1988, Belisle1989, Belisle1991}.  In the regularized problem the scaling limit for the winding angle becomes the
hyperbolic secant distribution, where all the moments exist.  Unsurprisingly, confinement to a finite region greatly increases the rate of winding, since the Brownian path returns near the winding center more frequently~\cite{Grosberg2003, Geng2018_preprint, Wen2019}.

A more challenging problem is the study of winding around multiple points or topological disks.  A natural first approach to this problem is the homological route, where  one examines the joint distribution of winding angles around each winding center. In the scaling limit these winding angles converge  to independent Cauchy distributions \cite{Pitman1984, Pitman1986}.  The homological route is inherently Abelian, in that the order of winding around the centers is lost.  Watanabe~\cite{Watanabe2000} studied winding on punctured surfaces of higher genus, and derived Gaussian limit distributions for the windings around each handle.

Another approach is via the fundamental group of the punctured surface, which is the group of deck transformations on its universal cover.  In that case, the non-Abelian aspect of the windings is captured, and we may regard distance in the universal cover as a measure of entanglement of the Brownian motion.  This approach was first introduced by It\^o and McKean \cite{ItoMcKean1965, McKean}
% page 279 and 108, respectively
who considered the twice-punctured plane.  (See also \cite{McKean1984, Lyons1984}.)  Gruet~\cite{Gruet1998} finds that the length of the word at time $t$ in the fundamental group of the thrice-punctured sphere grows at least like $t \log t$ as $t \to \infty$.  Desenonges \cite{Desenonges2019} considers a similar problem on a wider class of surfaces with $n$ punctures.  See also the book by Nechaev \cite{Nechaev} for winding in an infinite lattice of points. Note that the region in our Brownian entanglement problem is topologically equivalent to a sphere with $N$ holes, hence our result belongs to this class of non-Abelian problems. Our Markov chain can also be considered as a random walk on a regular language (see Remark~\ref{remark:gilch}).

Our paper is organized as follows. Section~\ref{sec:math_setup} and Section~\ref{sec:main} contain the precise setup of the problem and our main result. In Section~\ref{sec:examples} we give a number of applications of our main theorem. Section~\ref{sec:outline_of_proof} provides the key steps of the proof, which are proved in the rest of the paper (Section~\ref{sec:proofs} and Appendix~\ref{sec:appendix_proofs}).

%%%%%%%%%%%%%%%%%%%%%%%%%%%%%%%%%%%%%%%%%%%%%%%%%%%%%%%%%%%%%%%%%%%%%%%%%%%%%%%%%%
%%%%%%%%%%%%%%%%%%%%%%%%%%%%%%%%%%%%%%%%%%%%%%%%%%%%%%%%%%%%%%%%%%%%%%%%%%%%%%%%%%
%%%%%%%%%%%%%%%%%%%%%%%%%%%%%%%%%%%%%%%%%%%%%%%%%%%%%%%%%%%%%%%%%%%%%%%%%%%%%%%%%%
\section{Preliminaries}
\label{sec:math_setup}
%%%%%%%%%%%%%%%%%%%%%%%%%%%%%%%%%%%%%%%%%%%%%%%%%%%%%%%%%%%%%%%%%%%%%%%%%%%%%%%%%%
%%%%%%%%%%%%%%%%%%%%%%%%%%%%%%%%%%%%%%%%%%%%%%%%%%%%%%%%%%%%%%%%%%%%%%%%%%%%%%%%%%
%%%%%%%%%%%%%%%%%%%%%%%%%%%%%%%%%%%%%%%%%%%%%%%%%%%%%%%%%%%%%%%%%%%%%%%%%%%%%%%%%%

\subsection{The fundamental groupoid \texorpdfstring{$\groupoid_N$}{G(N)}}

We consider the groupoid representing the homotopy classes of continuous paths that start and end at the midpoints of the windows as in Figure~\ref{fig:4_window_arena_diagram}.

Recall that a groupoid is defined by a set of `objects' $\objectset$ and `arrows' $\arrowset$ and the following functions:
\begin{enumerate}[label=(G\arabic{*})]
    \item There are functions $\source$ (source) and $\target$ (target) from $\arrowset\to \objectset$. \label{eq:g_prop_source_and_target}
    \item There is a composition function $(f_1,f_2)\to f_1 f_2$ on a subset of $\arrowset\times \arrowset$ which is defined for $f_1, f_2$ if $\target(f_1) = \source(f_2)$, and in that case $\source(f_1 f_2) = \source(f_1)$, $\target(f_1 f_2) = \target(f_2)$. The composition function is associative. \label{eq:g_prop_composition}
    \item For each $i\in \objectset$ there is a unique unit element $e_i\in \arrowset$ with $\source(e_i) = \target(e_i) = i$ for which $e_i f = f$, $f e_i = f$ whenever these are defined.  \label{eq:g_prop_identities}
    \item There is an inverse for each element of $\arrowset$ satisfying $\source(f^{-1}) = \target(f)$, $\target(f^{-1}) = \source(f)$ and $f f^{-1}=e_{\source(f)}$, $f^{-1} f=e_{\target(f)}$. \label{eq:g_prop_inverses}
\end{enumerate}
We consider a groupoid~$\groupoid_N$ with object set $\objectset_N=\objectset=\{1,2,\dots,N\}$, and arrow set $\arrowset_N=\arrowset$ generated by the elements in
\begin{equation} \label{eq:genset_def}
    \genset{N} \defn \{A_{i, j}^{(k)} : i \neq j , \ \ 1 \leq i,  j \leq N, \ \ k \in \{-1, 1\} \},
\end{equation}
with
\begin{equation}
\source(A_{i, j}^{(k)})=i, \qquad \target(A_{i, j}^{(k)})=j.
\end{equation}
For convenience we define~$A_{i, i}^{(k)} \defn e_{i}$.
The set of generating relations for our groupoid is given by the relations
\begin{equation} \label{eq:groupoid_letter_relations}
 A_{i, j}^{(k)}A_{j, \ell}^{(k)}=A_{i, \ell}^{(k)} \qquad\qquad i, j, \ell \in \objectset,\,  k \in \{-1, 1\}.
\end{equation}
We call a composition of finitely many generating elements a word. We include the unit elements as words, and call them empty words.
Each arrow in $\groupoid_N$ corresponds to an equivalence class of words.  Two words are in the same equivalence class if they can be transformed into each other by repeated application of relations \eqref{eq:groupoid_letter_relations}. We say that a nonempty word is reduced if we cannot apply the relations \eqref{eq:groupoid_letter_relations} to reduce the number of generators used in the word. We consider the empty words reduced as well.

For $i\neq j$, the arrow $A_{i,j}^{(+1)}$ (respectively, $A_{i,j}^{(-1)}$) corresponds to a simple path in the upper (respectively, lower) part of the region connecting window $i$ to window $j$ as in Figure~\ref{fig:arcs}.  Figure \ref{fig:letters_schematic} shows a schematic of the groupoid structure for $N = 3$.

The arrows of the groupoid~$\groupoid_N$ can be represented as equivalence classes of paths in the directed multi-graph  which has vertices $\objectset=\{1,2,\dots, N\}$ and directed edges of the form $(i,j,k)$ with $i\neq j \in \objectset$, $k\in \{-1,1\}$. The generating elements $A_{i,j}^{(k)}$ correspond to the directed edges $(i,j,k)$; a composition of generating elements (a word) corresponds to a path in the multi-graph. The  starting and ending vertices of a path are the results of the source and target functions. Two paths are in the same equivalence class (and correspond to the same arrow in $\arrowset$) if they can be transformed into each other by the repeated use of the following operations and their inverses
\begin{enumerate}[label=(EC\arabic{*})]
    \item Deleting a backtracking step  $(i,j,k), (j,i,k)$ \label{eq:eclass_backtrack}
    \item Replacing two consecutive steps $(i,j,k), (j,\ell,k)$ with $(i,\ell,k)$ if $i,j,\ell$ are different. \label{eq:eclass_same_plane}
\end{enumerate}
These operations correspond to the generating relations \eqref{eq:groupoid_letter_relations}.
A path corresponds to a reduced word if if we cannot use either of the moves \ref{eq:eclass_backtrack} and \ref{eq:eclass_same_plane} on it.
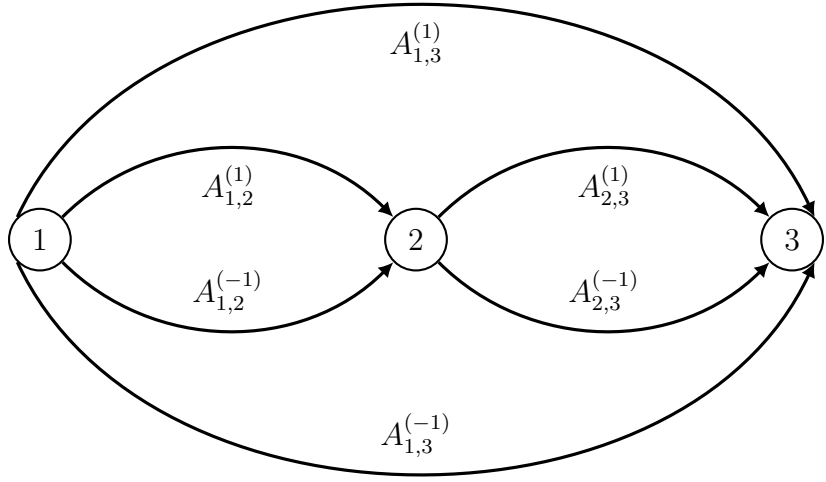
\begin{figure}[ht]
\centering
\begin{tikzpicture}

\coordinate (Ltr) at (-47mm,3mm);
\coordinate (Ltl) at (-53mm,3mm);

\coordinate (Lbr) at (-47mm,-3mm);
\coordinate (Lbl) at (-53mm,-3mm);

\coordinate (Mtl) at(-3mm,3mm);
\coordinate (Mtr) at(3mm,3mm);

\coordinate (Mbl) at(-3mm,-3mm);
\coordinate (Mbr) at(3mm,-3mm);

\coordinate (Rtl) at (47mm,3mm);
\coordinate (Rtr) at (53mm,3mm);

\coordinate (Rbl) at (47mm,-3mm);
\coordinate (Rbr) at (53mm,-3mm);

\node[] at (-50mm,0mm) {$1$};
\draw[black, thick] (-50mm,0mm) circle (4.1mm);

\node[] at (0mm,0mm) {$2$};
\draw[black, thick] (0mm,0mm) circle (4.1mm);

\node[] at (50mm,0mm) {$3$};
\draw[black, thick] (50mm,0mm) circle (4.1mm);

\draw[-{Latex[length=2mm, width=2mm]}, black, very thick] (Ltr) to[out=45,in=135] (Mtl);
\node[] at (-25mm,7mm) {$A_{1, 2}^{(1)}$};

\draw[-{Latex[length=2mm, width=2mm]}, black, very thick] (Ltl) to[out=65,in=115] (Rtr);
\node[] at (0mm,26mm) {$A_{1, 3}^{(1)}$};

\draw[-{Latex[length=2mm, width=2mm]}, black, very thick] (Mtr) to[out=45,in=135] (Rtl);
\node[] at (25mm,7mm) {$A_{2, 3}^{(1)}$};

\draw[-{Latex[length=2mm, width=2mm]}, black, very thick] (Mbr) to[out=-45,in=-135] (Rbl);
\node[] at (25mm,-7mm) {$A_{2, 3}^{(-1)}$};

\draw[-{Latex[length=2mm, width=2mm]}, black, very thick] (Lbr) to[out=-45,in=-135] (Mbl);
\node[] at (-25mm,-7mm) {$A_{1, 2}^{(-1)}$};

\draw[-{Latex[length=2mm, width=2mm]}, black, very thick] (Lbl) to[out=-65,in=-115] (Rbr);
\node[] at (0mm,-26mm) {$A_{1, 3}^{(-1)}$};

\end{tikzpicture}
\caption{The arrows represented by members $A_{i, j}^{(k)}$ of $\genset{3}$ with $i < j$. The objects in our groupoid $\{1, 2, 3 \}$ are represented by circles. Each arrow points from $\source(A_{i, j}^{(k)})$ to $\target(A_{i, j}^{(k)})$.}
\label{fig:letters_schematic}
\end{figure}
An important consequence of the properties of our groupoid is that each non-unit arrow can be uniquely represented as a product of elements of $\genset{N}$ which alternate between $\pm 1$ in the upper index $(k)$. More precisely, we have the following lemma which is proved in Appendix~\ref{sec:appendix_proofs}.
\begin{lemma} \label{lemma:plane_switching}
Each arrow $w \in \arrowset$ can be represented as a reduced word in a unique way. This reduced word is either an empty word or, for some $d \geq 1$, a product of the form
\begin{equation} \label{eq:generic_reduced_word}
    w = A_{i_1, i_2}^{(k)} A_{i_2, i_3}^{(-k)} A_{i_3, i_4}^{(k)} \cdots  A_{i_d, i_{d + 1}}^{((-1)^{d + 1} k)} \text{ such that } i_\ell \neq i_{\ell + 1} \text{ for all } 1 \leq \ell \leq d.
\end{equation}
\end{lemma}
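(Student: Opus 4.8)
The plan is to establish \eqref{eq:generic_reduced_word} in two steps --- existence of a reduced representative of the claimed shape, then uniqueness --- the second being where the real work lies.

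\emph{Existence.} Starting from any word representing $w$, repeatedly apply the moves \ref{eq:eclass_backtrack} and \ref{eq:eclass_same_plane}. Each application strictly decreases the number of generators in the word, so the procedure terminates. If a word $A_{i_1,i_2}^{(k_1)}\cdots A_{i_d,i_{d+1}}^{(k_d)}$ admits no further move, then it cannot happen that $k_\ell=k_{\ell+1}$ for some $\ell$: in that case $i_\ell\neq i_{\ell+1}\neq i_{\ell+2}$ already (these are generators), so move \ref{eq:eclass_backtrack} applies if $i_\ell=i_{\ell+2}$ and move \ref{eq:eclass_same_plane} applies otherwise. Hence a terminal word is either empty or has the strictly alternating form \eqref{eq:generic_reduced_word}, the condition $i_\ell\neq i_{\ell+1}$ being automatic.

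\emph{Uniqueness.} I would use the van der Waerden argument. Write $\mathcal{R}_i$ for the set of reduced words with source $i$, including the empty word $e_i$, and set $\mathcal{R}=\bigsqcup_{i\in\objectset}\mathcal{R}_i$. For each generator $a=A_{i,j}^{(k)}\in\genset{N}$ define a map $L_a\colon\mathcal{R}_j\to\mathcal{R}_i$ by the cases: $L_a(e_j)=a$; $L_a(w)=aw$ if $w$ begins with a letter whose upper index is $-k$; $L_a\bigl(A_{j,m}^{(k)}w'\bigr)=A_{i,m}^{(k)}w'$ if $m\neq i$; and $L_a\bigl(A_{j,i}^{(k)}w'\bigr)=w'$. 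In each case the output is readily checked to lie in $\mathcal{R}_i$. Put $L_{e_i}=\mathrm{id}_{\mathcal{R}_i}$. The crucial point is the identity
\[
L_{A_{i,j}^{(k)}}\circ L_{A_{j,\ell}^{(k)}}=L_{A_{i,\ell}^{(k)}}\qquad\text{for all admissible }i,j,\ell,k
\]
(with the convention $A_{m,m}^{(k)}=e_m$): it says precisely that the $L_a$ respect the defining relations \eqref{eq:groupoid_letter_relations}. Granting it, since concatenation of words corresponds to composition of these maps and the relations are thereby respected, the assignment $a\mapsto L_a$ descends to a well-defined assignment $w\mapsto L_w$ on $\arrowset$ with $L_{e_i}=\mathrm{id}$ and $L_{w_1w_2}=L_{w_1}\circ L_{w_2}$; in particular each $L_a$ is invertible, since taking $\ell=i$ in the identity gives $L_{A_{i,j}^{(k)}}\circ L_{A_{j,i}^{(k)}}=\mathrm{id}$. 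A one-line induction on length now shows that if $W=a_1\cdots a_d$ is \emph{itself} a reduced word then $L_W(e_{\target W})=W$, because each $L_{a_\ell}$ merely prepends $a_\ell$ to a word whose leading upper index is $-k_\ell$. Consequently, if $W$ and $W'$ are reduced words representing the same arrow $w$, then $W=L_W(e_{\target w})=L_w(e_{\target w})=L_{W'}(e_{\target w})=W'$, which is the asserted uniqueness.

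The single genuine obstacle is the displayed identity $L_{A_{i,j}^{(k)}}\circ L_{A_{j,\ell}^{(k)}}=L_{A_{i,\ell}^{(k)}}$: each individual case is immediate, but one must enumerate them with care --- according to whether the input word is empty or has leading upper index $+k$ or $-k$, crossed with the coincidences $\ell=i$, $\ell=j$, or $\ell$ distinct from both, and the sub-case where the leading target equals $i$. An essentially equivalent alternative is to observe that the rewriting system \ref{eq:eclass_backtrack}--\ref{eq:eclass_same_plane} is terminating (word length decreases) and then verify local confluence by Newman's lemma; the only overlaps arise from three consecutive letters of equal upper index, and both ways of resolving such a triple $A_{a,b}^{(k)}A_{b,c}^{(k)}A_{c,d}^{(k)}$ yield $A_{a,d}^{(k)}$ (with the degenerate coincidences among $a,b,c,d$ checked separately). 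Conceptually, $\groupoid_N$ is the coproduct, amalgamated over the discrete groupoid on $\objectset$, of two copies of the indiscrete groupoid on $\objectset$, and the lemma is the normal form theorem for this free product of groupoids.
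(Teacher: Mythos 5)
Your existence argument is essentially the same as the paper's (greedy reduction terminates; a terminal word must alternate in the upper index). For uniqueness, however, you take a genuinely different route. The paper argues syntactically: given any chain of relation-applications joining two reduced representatives, it reorders the chain so that all length-decreasing (``down'') moves precede all length-increasing (``up'') moves, by locally commuting an up/down pair; since neither reduced endpoint admits a down move applied forward nor an up move applied backward, the reordered chain must be trivial. This is a direct derivation-normalization argument, close in spirit to (but not literally invoking) Newman's lemma, which you correctly identify as a second available route. Your primary argument is instead the van der Waerden trick: construct the operators $L_a$ on the set of reduced words, verify they respect the single family of relations~\eqref{eq:groupoid_letter_relations}, and deduce that $L$ descends to the groupoid; evaluating at the appropriate identity then recovers the reduced word from the arrow, giving uniqueness. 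This is a semantic argument --- it exhibits an explicit invariant (a groupoid action) separating distinct reduced words --- and it sidesteps the rewriting-confluence bookkeeping entirely. What each buys: the paper's proof is self-contained and close to the combinatorial definitions, but the ``one can check the cases one-by-one'' step is where the real work sits and is left informal; your approach front-loads the case analysis into the single clean identity $L_{A_{i,j}^{(k)}}\circ L_{A_{j,\ell}^{(k)}}=L_{A_{i,\ell}^{(k)}}$, which (I checked) does hold in all the sub-cases you list, and which also immediately yields invertibility of the $L_a$. Both proofs leave a comparable amount of routine verification to the reader, so neither is clearly shorter; yours is arguably more conceptual, and your closing observation that $\groupoid_N$ is a pushout of two indiscrete groupoids over the discrete one makes the normal-form statement transparent. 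The argument is correct.
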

We say that $|\cdot|_{\metric}:\arrowset\to [0,\infty)$ is a metric on $\arrowset$ generated by $\genset{N}$ if $|e_i|_\metric = 0$ for all $i$ and for any nonempty $w\in \arrowset$ we have
\begin{equation} \label{eq:metric_condition}
    |w|_{\metric}  = \sum_{\ell = 1}^{d} \left|A_{i_\ell , j_\ell}^{(k_\ell)}\right|_{\metric},
\end{equation}
where $ \prod_{\ell = 1}^{d} A_{i_\ell , j_\ell}^{(k_\ell)}$ is the unique reduced representation of $w$ given by Lemma~\ref{lemma:plane_switching}. We reserve $|\cdot|$ to denote the number of generators in the reduced representation of  $w\in \arrowset$;
\begin{equation} \label{eq:word_length_metric_condition}
    |w|= \left|\prod_{\ell = 1}^{d} A_{i_\ell , j_\ell}^{(k_\ell)}\right| = d.
\end{equation}

\subsection{The Markov chain on \texorpdfstring{$\groupoid_N$}{G(N)}}
\label{sec:markov_chain}

We study discrete time Markov chains $\{W_{n}\}_{n \geq 0}$ on the arrow set $\arrowset$. We assume the following:
\begin{assumption}\label{ass:append_A_each_step}
In each step the value of $W_n$ changes by the right composition of a generating element $A_{i, j}^{(k)}$, i.e. $W_{n}^{-1} W_{n + 1} \in \genset{N}$ for all $n \geq 0$.
\end{assumption}
\begin{assumption} \label{ass:prob_form}
The conditional probability of the increments given the starting state depends on the starting state only through its target.
\end{assumption}
These two assumptions imply that for $x, y \in \arrowset$ the transition probability function is of the form
\begin{equation} \label{eq:full_transition_probs}
P\left( W_{n + 1} = y \st W_{n} = x \right) \defn
\begin{cases}
p_{i, j}^{(k)} & \text{if } x^{-1} y = A_{i , j}^{(k)} \text{ and } i \neq j  \\
0 & \text{otherwise}.
\end{cases}
\end{equation}
We further assume:
\begin{assumption} \label{ass:probs_positive}
The jump probabilities satisfy $p_{i, j}^{(k)} \in (0, 1) $ for each $(i, j, k)$.
\end{assumption}

We consider $N \geq 3$ since, if $N = 2$, then the process $\{W_{2n}\}_{n \geq 0}$ reduces to the Abelian case, namely a Markov chain on the free group of rank 1, namely a lazy random walk on $\mathbb{Z}$.

From Eq.~\eqref{eq:full_transition_probs}, we see that there are $2N(N - 1)$ transition probabilities and we must have $\sum_{j, k} p_{i, j}^{(k)} = 1$ for each $1 \leq i \leq N$. We call $W_n$ the ``word'' at time $n$. We use the notation $P_x(\cdot)$ and $E_x[\cdot]$ to indicate that the probabilities and expectations in question are calculated under the initial condition $W_0 = x$.

Define the sets
\begin{equation} \label{eq:irreducible_arrowset}
    \arrowset_i \defn \{w: \source(w) = i, w \in \arrowset \}, \quad 1 \leq i \leq N.
\end{equation}
Some important consequences of our assumptions are collected in the following lemma which is proved in Appendix~\ref{sec:appendix_proofs}.
\begin{lemma} \label{lemma:intermediate_words}
\hspace{20cm} % hack so that first item appears on its own line
\begin{enumerate}[label=(\roman*), leftmargin=* ]
    \item For any $x \in \arrowset$, under $P_x$ the process $\{x^{-1} W_n\}_{n\ge 0}$ has the same distribution as the process $\{ W_n\}_{n\ge 0}$ under $P_{e_{\target(x)}}$. \label{lemma:intermediate_words_markov}
    \item There is a positive probability path in the Markov chain $\{W_n\}_{n \geq 0}$ between two words $w_1 \neq w_2$ if and only if $w_1, w_2 \in \arrowset_i$ for some $1 \leq i \leq N$. \label{lemma:intermediate_words_sources}
    \item Suppose that $\{v_n\}_{1 \leq n \leq n'}$ is such a path where $v_1 = w_1$ and $v_{n'} = w_2$. Consider the reduced composition
\begin{equation}
    w_1^{-1} w_2= \prod_{\ell = 1}^{d} A_{i_\ell , j_\ell}^{(k_\ell)} \quad d \geq 1
\end{equation}
given by Lemma~\ref{lemma:plane_switching}. Then, there exist times $\{n_m\}_{1 \leq m\leq d - 1}$ satisfying $1 <  n_1 < n_2< \dots <n_{d - 1} < n'$, such that $v_{n_m} = w_1 \prod_{\ell = 1}^{m} A_{i_\ell , j_\ell}^{(k_\ell)}$ for each $1 \leq m \leq d- 1$. \label{lemma:intermediate_words_intermediate}
\end{enumerate}
\end{lemma}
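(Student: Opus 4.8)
The three parts are essentially independent, with the substance concentrated in part~\ref{lemma:intermediate_words_intermediate}. For part~\ref{lemma:intermediate_words_markov}, the plan is a direct computation with the Markov property. Right-composition with an element of $\genset{N}$ preserves the source by \ref{eq:g_prop_composition}, so under $P_x$ every $W_n$ has source $\source(x)$; hence $x^{-1}W_n$ is a well-defined arrow with $x^{-1}W_0 = e_{\target(x)}$, $\target(x^{-1}W_n)=\target(W_n)$, and increments $(x^{-1}W_n)^{-1}(x^{-1}W_{n+1}) = W_n^{-1}W_{n+1}$ that coincide with those of $\{W_n\}$. By Assumption~\ref{ass:prob_form} the conditional law of the next increment depends on the current state only through its target, which agrees for $x^{-1}W_n$ and $W_n$. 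So $\{x^{-1}W_n\}$ is a Markov chain with the transition probabilities \eqref{eq:full_transition_probs} started from $e_{\target(x)}$, and by uniqueness of the law of such a chain this is $P_{e_{\target(x)}}$.

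For part~\ref{lemma:intermediate_words_sources}, the ``only if'' direction is immediate, since by Assumption~\ref{ass:append_A_each_step} every step preserves the source. For ``if'', take $w_1\neq w_2$ with common source $i$; then $w_1^{-1}w_2$ is a non-unit arrow, so Lemma~\ref{lemma:plane_switching} provides a reduced representation $\prod_{\ell=1}^d A_{i_\ell,j_\ell}^{(k_\ell)}$ with $d\geq1$ and each $i_\ell\neq j_\ell$. Then $v_m \defn w_1\prod_{\ell=1}^{m-1}A_{i_\ell,j_\ell}^{(k_\ell)}$ for $1\leq m\leq d+1$ is a path from $w_1$ to $w_2$ each of whose $d$ steps right-composes an element of $\genset{N}$, hence of probability $\prod_{\ell=1}^d p_{i_\ell,j_\ell}^{(k_\ell)}>0$ by Assumption~\ref{ass:probs_positive}.

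Part~\ref{lemma:intermediate_words_intermediate} is where I expect the real work. After translating by $w_1^{-1}$ (legitimate since all $v_n$ share a source) I would set $u_n \defn w_1^{-1}v_n$, so that $u_1 = e_{\target(w_1)}$, $u_{n'}=w_1^{-1}w_2$, and consecutive $u_n$ differ by right-composition of an element of $\genset{N}$; write $\prod_{\ell=1}^d A_{i_\ell,j_\ell}^{(k_\ell)}$ for the reduced representation of $u_{n'}$ given by Lemma~\ref{lemma:plane_switching}. The key combinatorial step -- which I regard as the main obstacle, the rest being bookkeeping -- is to show that one such step alters the reduced word in exactly one of three ways: it \emph{appends} a new generator (``syllable'') at the end; it \emph{deletes} the last syllable; or it \emph{replaces} the last syllable by a different element of $\genset{N}$ with the same source and the same upper index. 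In particular the reduced words of $u_n$ and $u_{n+1}$ then agree in every syllable but the last one of the shorter. This follows from a short case analysis: write $u_n$ in the form \eqref{eq:generic_reduced_word}, right-compose the generator $A_{\target(u_n),\ell}^{(\kappa)}$, and apply the relations \eqref{eq:groupoid_letter_relations} (equivalently moves \ref{eq:eclass_backtrack}--\ref{eq:eclass_same_plane}) to the product of the last syllable of $u_n$ with this generator, distinguishing whether $\kappa$ equals or is opposite to the plane of the last syllable, and in the former case whether $\ell$ equals or differs from the source of that syllable.

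With this at hand I would run a monovariant argument. Let $g(u)\in\{0,1,\dots,d\}$ be the number of leading syllables of the reduced word of $u$ that coincide with the corresponding syllables of $u_{n'}$. The three-case description gives $|g(u_{n+1})-g(u_n)|\leq1$, a delete never increasing $g$, and -- crucially -- $g$ strictly increasing only via an append or a replace that extends the match, in which case necessarily $u_{n+1}=\prod_{\ell=1}^{g(u_n)+1}A_{i_\ell,j_\ell}^{(k_\ell)}$. Since $g(u_1)=0$ and $g(u_{n'})=d$, a discrete intermediate-value argument shows that for each $1\leq m\leq d-1$ the first time $n_m$ at which $g(u_{n_m})=m$ satisfies $g(u_{n_m-1})=m-1$, whence $u_{n_m}=\prod_{\ell=1}^m A_{i_\ell,j_\ell}^{(k_\ell)}$; minimality gives $1<n_1<\dots<n_{d-1}<n_d\leq n'$, and undoing the translation yields $v_{n_m}=w_1\prod_{\ell=1}^m A_{i_\ell,j_\ell}^{(k_\ell)}$, as required. (Alternatively one can induct on $d$: let $n_1$ be the last time before $n'$ with $|u_n|\leq1$; the next step is forced to be an append and, by the three-case description, the leading syllable is frozen once $|u_n|\geq2$, so $u_{n_1}=A_{i_1,j_1}^{(k_1)}$; then apply the inductive hypothesis to $\{(A_{i_1,j_1}^{(k_1)})^{-1}u_n\}_{n\geq n_1}$, which runs from $e_{j_1}$ to a reduced word of length $d-1$.)
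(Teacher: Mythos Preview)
Your proposal is correct. Parts~\ref{lemma:intermediate_words_markov} and~\ref{lemma:intermediate_words_sources} match the paper's argument essentially verbatim (you are simply more explicit about the Markov-property computation and the construction of the positive-probability path from partial products).

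For part~\ref{lemma:intermediate_words_intermediate} you give two routes. Your \emph{alternative} inductive argument---take the last time $n_1$ before $n'$ with $|u_n|\le 1$, observe that from then on $|u_n|\ge 2$ so the first syllable is frozen and must equal $A_{i_1,j_1}^{(k_1)}$, then left-translate and recurse---is exactly the paper's proof: the paper handles the base case $w_1=e_i$, $w_2=A_{i,j}^{(k)}A_{j,\ell}^{(-k)}$ via $\alpha=\max\{n<n':|v_n|=1\}$ and then says ``iterate with the help of \ref{lemma:intermediate_words_markov}''. Your \emph{primary} monovariant argument, using $g(u)$ = length of the maximal matching prefix with $u_{n'}$, is a genuinely different organization. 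It rests on the same combinatorial fact you isolate (one step is an append, a delete, or a same-plane replace of the last syllable), which the paper uses implicitly when it says ``$|v_i|$ changes by at most one'' and ``the first generator is frozen once $|v_k|\ge 2$''. The monovariant packaging has the advantage of producing all the intermediate times $n_1<\cdots<n_{d-1}$ in a single pass without an induction on $d$, at the cost of introducing and analyzing the function $g$; the paper's inductive version is shorter to state but leaves the iteration step to the reader. Either is fine here.
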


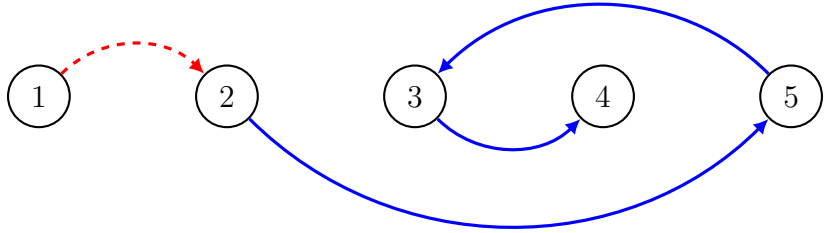
\begin{figure}[ht]
\centering
\begin{tikzpicture}

\coordinate (LLtr) at (-47mm,3mm);
\coordinate (LLtl) at (-53mm,3mm);

\coordinate (LLbr) at (-47mm,-3mm);
\coordinate (LLbl) at (-53mm,-3mm);

\coordinate (Ltr) at (-22mm,3mm);
\coordinate (Ltl) at (-28mm,3mm);

\coordinate (Lbr) at (-22mm,-3mm);
\coordinate (Lbl) at (-28mm,-3mm);

\coordinate (Mtl) at(-3mm,3mm);
\coordinate (Mtr) at(3mm,3mm);

\coordinate (Mbl) at(-3mm,-3mm);
\coordinate (Mbr) at(3mm,-3mm);

\coordinate (Rtr) at (28mm,3mm);
\coordinate (Rtl) at (22mm,3mm);

\coordinate (Rbr) at (28mm,-3mm);
\coordinate (Rbl) at (22mm,-3mm);

\coordinate (RRtl) at (47mm,3mm);
\coordinate (RRtr) at (53mm,3mm);

\coordinate (RRbl) at (47mm,-3mm);
\coordinate (RRbr) at (53mm,-3mm);

\node[] at (-50mm,0mm) {$1$};
\draw[black, thick] (-50mm,0mm) circle (4.1mm);
% \draw (LLbl) rectangle (LLtr);

\node[] at (-25mm,0mm) {$2$};
\draw[black, thick] (-25mm,0mm) circle (4.1mm);
% \draw (Lbl) rectangle (Ltr);

\node[] at (0mm,0mm) {$3$};
\draw[black, thick] (0mm,0mm) circle (4.1mm);
% \draw (Mbl) rectangle (Mtr);

\node[] at (25mm,0mm) {$4$};
\draw[black, thick] (25mm,0mm) circle (4.1mm);
% \draw (Rbl) rectangle (Rtr);

\node[] at (50mm,0mm) {$5$};
\draw[black, thick] (50mm,0mm) circle (4.1mm);
% \draw (RRbl) rectangle (RRtr);

\draw[-{Latex[length=2mm, width=2mm]}, red, very thick, dashed] (LLtr) to[out=45,in=135] (Ltl);

\draw[-{Latex[length=2mm, width=2mm]}, blue, very thick] (Lbr) to[out=-45,in=-135] (RRbl);

\draw[-{Latex[length=2mm, width=2mm]}, blue, very thick] (RRtl) to[out=135,in=45] (Mtr);

\draw[-{Latex[length=2mm, width=2mm]}, blue, very thick] (Mbr) to[out=-45,in=-135] (Rbl);

\end{tikzpicture}
\caption{An illustration for Statement~\ref{lemma:intermediate_words_intermediate} of Lemma~\ref{lemma:intermediate_words}.
Let  $w_1 = A_{1 , 2}^{(1)}$ (red, dashed) and $w_1^{-1} w_2 = A_{2 , 5}^{(-1)} A_{5 , 3}^{(1)} A_{3 , 4}^{(-1)}$ (blue, solid) in the $N = 5$ case. A path joining $w_1$ and $w_2$ must visit the intermediate words $w_1 A_{2, 5}^{(-1)}$, $w_1 A_{2, 5}^{(-1)} A_{5, 3}^{(1)}$ in this order.}
\label{fig:multiple_R_path_example}
\end{figure}

\begin{remark} \label{remark:physical_situation_connection}
To connect with the original problem described in Section~\ref{sec:motivation}, we first note that any path in the domain $D$  (described in Figure ~\ref{fig:path}) starting at a window and ending at a window can be naturally mapped into an element of $\arrowset$. Indeed, an element of $\arrowset$ corresponds to the homotopy class of a path connecting two windows, where we allow the starting and ending points of the path to move inside a window.

Let $B_t$ be the position of the reflected Brownian motion in $D$ at time $t$. Let $\tau_0 = \inf_{s \geq 0}\{B_s \text{ at a window}\}$ and
\begin{equation}
\tau_{n + 1} = \inf_{s \geq \tau_n}\{ B_{s} \text{ at a window different from the window visited at time } \tau_n \}.
\end{equation}
Denoting by $W_n$ the element in $\arrowset$ corresponding to the Brownian path in the time interval $[\tau_0, \tau_n]$, we see that the sequence $(W_n, B_{\tau_n})_{n\ge 0}$ is a Markov chain on $\arrowset\times D$. By the narrow escape problem  \cite{HolcmanSchuss} the transition distribution of this Markov chain will only depend on the first coordinate (an element in $\arrowset$) as all window sizes go to 0, which leads to a Markov chain on $\arrowset$ satisfying the conditions described  in Section \ref{sec:markov_chain}. The narrow escape problem also implies that understanding the growth of the length of $W_n$ in $n$ allows us to understand the growth of the entanglement of the Brownian path in $t$.
\end{remark}

\section{Statement of main result}
\label{sec:main}

We now state our main result, which is the computation of the almost-sure limit $\lim_{n \to \infty}|W_n|_{\metric}/n$ as well as a central limit theorem. To do this, we require a set of functions whose properties are collected in the following proposition.

\begin{proposition} \label{prop:R_properties}
    There is an $\epsilon > 0$ and a unique set of complex functions
    \begin{equation}
    \{R_{i, j}^{(k)}:D_{1+\epsilon}\to \isset{C} : i \neq j , \ \ 1 \leq i,  j \leq N, \ \ k \in \{-1, 1\} \},\qquad D_r=\{z:|z|< r\},
\end{equation}
  satisfying the following properties:
\begin{enumerate}[label=(R\arabic{*})]
    \item \label{eq:R_property_radius} Each $R_{i, j}^{(k)}$ is complex analytic in the disk $D_{1+\epsilon}$.

     \item \label{eq:R_property_physical} If $\lambda \in [0, 1]$, then $R_{i, j}^{(k)}(\lambda) \in (0, 1)$.

     \item \label{eq:R_property_equations} They satisfy the system of equations
     \begin{equation} \label{eq:fund_R_eqs}
    R_{i, j}^{(k)} = \lambda \left[ p_{i, j}^{(k)} + \sum_{m \neq i, j } p_{i, m}^{(k)}  R_{m, j}^{(k)} + \sum_{m \neq i } p_{i, m}^{(-k)} R_{m, i}^{(-k)} R_{i, j}^{(k)}   \right], \quad \text{for }\lambda\in D_{1+\epsilon}.
    \end{equation}
 \end{enumerate}
\end{proposition}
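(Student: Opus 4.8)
\emph{Strategy.} The plan is to combine the holomorphic implicit function theorem (for the analytic structure and for uniqueness) with a probabilistic construction of a genuine solution on the real interval $[0,1]$, and then to push the domain past $|\lambda|=1$ using a quantitative transience estimate for the word length. The hard part is the last one.

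\emph{Local existence and uniqueness.} Rewrite \eqref{eq:fund_R_eqs} as $F(\lambda,R)=0$, where $F\colon\isset{C}\times\isset{C}^{2N(N-1)}\to\isset{C}^{2N(N-1)}$ is the polynomial map with components $F_{i,j}^{(k)}(\lambda,R)=R_{i,j}^{(k)}-\lambda\big[\,p_{i,j}^{(k)}+\sum_{m\neq i,j}p_{i,m}^{(k)}R_{m,j}^{(k)}+\sum_{m\neq i}p_{i,m}^{(-k)}R_{m,i}^{(-k)}R_{i,j}^{(k)}\,\big]$. Since $F(0,\mathbf 0)=0$ and $\partial_R F(0,\mathbf 0)=\idmat$, the holomorphic implicit function theorem produces $\delta>0$ and a unique holomorphic $R\colon D_\delta\to\isset{C}^{2N(N-1)}$ with $R(0)=\mathbf 0$ and $F(\lambda,R(\lambda))\equiv 0$. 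As \eqref{eq:fund_R_eqs} forces any solution to vanish at $\lambda=0$, this is the unique holomorphic germ at $0$ solving the system, so uniqueness of the whole family on a common disk $D_{1+\epsilon}$ will follow from the identity theorem once existence there is established.

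\emph{A solution on $[0,1]$ and the functional equation.} I would construct the solution probabilistically. Running the chain from $W_0=e_i$, let $\tau_{i,j}^{(k)}$ be the first time $n\ge 1$ at which the reduced word $W_n$ begins with the generator $A_{i,j}^{(k)}$, with $\tau_{i,j}^{(k)}=\infty$ if this never occurs, and put $R_{i,j}^{(k)}(\lambda)\defn E_{e_i}\!\big[\lambda^{\tau_{i,j}^{(k)}}\indic(\tau_{i,j}^{(k)}<\infty)\big]$. A first-step decomposition---conditioning on the generator appended at time $1$, and using the shift structure of the chain: a word beginning with $A_{i,m}^{(k)}$ behaves, as far as its leading segment is concerned, like a fresh chain from $e_m$, while after an opposite-plane first step the chain returns to $e_i$ for the first time with generating function $R_{m,i}^{(-k)}$ and then restarts---shows that these functions satisfy exactly \eqref{eq:fund_R_eqs} for $\lambda\in[0,1]$. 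The Taylor coefficients $c_{i,j}^{(k)}(n)=P_{e_i}(\tau_{i,j}^{(k)}=n)$ are nonnegative with $c_{i,j}^{(k)}(1)=p_{i,j}^{(k)}>0$, so $R_{i,j}^{(k)}>0$ on $(0,1]$, and $\sum_n c_{i,j}^{(k)}(n)\le 1$; these functions coincide with the germ above near $0$.

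\emph{The main obstacle.} What remains---the strict bound $R_{i,j}^{(k)}(\lambda)<1$ on $[0,1]$ (property (R2)) and holomorphy on a disk of radius $>1$ (property (R1))---is a quantitative transience statement for $|W_n|$, and is exactly where $N\ge 3$ and Assumption~\ref{ass:probs_positive} are essential (for $N=2$ the chain is recurrent, $R(1)=\mathbf 1$, and $\mathbf 1$ is itself a solution of \eqref{eq:fund_R_eqs} at $\lambda=1$). The estimate I would prove is a uniform drift bound: from any state an opposite-plane generator, which raises $|W_n|$ by one, is appended with total probability bounded below, and an excursion argument using Assumption~\ref{ass:probs_positive} gives $|W_n|$ a positive drift over macroscopic time windows, whence $P_{e_i}(|W_m|\le c\text{ for all }m\le n)\le C\rho^{\,n}$ for some $c$ and some $\rho<1$. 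This yields, first, $P_{e_i}(\tau_{i,j}^{(k)}<\infty)<1$---with positive probability a leading generator other than $A_{i,j}^{(k)}$ is frozen because the word never returns to $e_i$ after the first step---giving (R2); and, second, $c_{i,j}^{(k)}(n)\le C\rho^{\,n}$, so by the Cauchy--Hadamard formula the series $\sum_n c_{i,j}^{(k)}(n)\lambda^n$ has radius of convergence $>1$, giving (R1), and \eqref{eq:fund_R_eqs} then extends to all of $D_{1+\epsilon}$ by analytic continuation. An equivalent route is to show the nonnegative Jacobian of the bracketed expression in \eqref{eq:fund_R_eqs}, scaled by $\lambda$ and evaluated along the real solution, has spectral radius $<1$ for $\lambda\in[0,1]$---the case $\lambda=1$ reducing to exhibiting a strictly positive vector $v$ with $Mv\le\theta v$ for $\theta<1$, $M$ that Jacobian at $(1,R(1))$, built from the same excursion estimates---and then to apply the holomorphic implicit function theorem at each point of $[0,1]$ together with compactness to extend $R$ holomorphically to a neighborhood of $[0,1]$, which contains some $D_{1+\epsilon}$. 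Either way, uniqueness follows from the identity theorem as above: any holomorphic family solving \eqref{eq:fund_R_eqs} near $0$ vanishes at $0$, hence equals $R$ near $0$ and on the overlap of the domains.
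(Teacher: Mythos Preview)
Your overall architecture mirrors the paper's: construct the solution probabilistically as first-passage generating functions, verify \eqref{eq:fund_R_eqs} by a first-step decomposition, and then push past $|\lambda|=1$ via an exponential tail bound. Your uniqueness argument---implicit function theorem at $\lambda=0$ plus the identity theorem---is a legitimate alternative to the paper's, which instead proves pointwise uniqueness on $[0,1]$ among solutions lying in $(0,1)^{2N(N-1)}$ by a convexity argument borrowed from branching processes (Proposition~\ref{prop:Rs_unique}); your route is arguably cleaner, though it does not give the pointwise statement.

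The genuine gap is the transience step. You correctly flag it as the main obstacle but do not supply the key idea. The one-step increment $|W_{n+1}|-|W_n|$, conditioned on the last letter of $W_n$ being $A_{i,j}^{(k)}$, has mean $\sum_{m\neq j}p_{j,m}^{(-k)}-p_{j,i}^{(k)}$, which can be negative under Assumption~\ref{ass:probs_positive} alone (take $p_{j,i}^{(k)}$ close to $1$); so a naive drift argument fails, and your ``excursion argument over macroscopic time windows'' is left unsubstantiated. Your alternative route is circular: the Jacobian you describe is precisely the paper's matrix $\ismat{M}(\lambda)$ of Eq.~\eqref{eq:M_mat_def}, and producing a strictly positive vector $v$ with $\ismat{M}(1)v\le\theta v$ ``from the same excursion estimates'' already presupposes transience. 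The paper breaks the circle indirectly (Proposition~\ref{prop:transience}): assuming recurrence forces $R_{i,j}^{(k)}(1)=1$; differentiating \eqref{eq:fund_R_eqs} shows the Perron--Frobenius eigenvalue $\mu(\lambda)$ of $\ismat{M}(\lambda)$ satisfies $\mu(\lambda)<1$ for $\lambda\in(0,1)$; and then one constructs a nonzero eigenvector of $\ismat{M}(1)$ with eigenvalue $1$ via an ansatz satisfying $u_{i,j}^{(k)}=-u_{j,i}^{(k)}$, so the entries cannot all have one sign and $1$ is not the Perron root---hence $\mu(1)>1$, contradicting continuity. This eigenvector construction is the missing idea. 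Once transience is in hand, the exponential return-time estimate (Proposition~\ref{prop:radii_of_convergence}) is indeed an excursion argument of the flavour you sketch, with transience supplying the uniform positive escape probability that seeds it.
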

Let $\ismat{B}:\{-1,1\}\times D_{1+\epsilon}\times \isset{C}\to \isset{C}^{N\times N}$ be a matrix-valued function whose $i, j^{\text{th}}$ entry is
\begin{equation} \label{eq:B_matrix_intro}
\matinds{B}{i}{j}(k; \lambda, z) \defn (1 - \delta_{i, j})\, z^{|A_{i , j}^{(k)} |_{\metric} } R_{i, j}^{(k)}(\lambda)
\end{equation}
where $\delta_{i, j}$ is the Kronecker delta.  Now let
\begin{equation} \label{eq:g_intro}
    h(\lambda, z) \defn  \det\left[\idmat - \ismat{B}(1; \lambda, z)\, \ismat{B}(-1; \lambda, z) \right],
\end{equation}
where $\idmat$ is the identity matrix. Let
\begin{subequations}
\label{eq:gamma_sigma}
\begin{align}
    \gamma &\defn \frac{\partial_z h(1, 1)}{\partial_\lambda h(1, 1)}, \label{eq:gamma_intro} \\
    \sigma^2 &\defn \frac{\partial_z^2 h(1, 1) + \partial_z h(1, 1) - 2 \gamma \, \partial_{z, \lambda}^2 h(1, 1) + \gamma^2 \, \left(\partial_\lambda^2 h(1, 1) + \partial_\lambda h(1, 1) \right) }{\partial_\lambda h(1, 1)}. \label{eq:sigma_intro}
\end{align}
\end{subequations}
We are now ready to state our main result:
\begin{theorem} \label{thm:main}
Consider the Markov chain $\{W_n\}_{n \geq 0}$ satisfying Assumptions~\ref{ass:append_A_each_step}, \ref{ass:prob_form} and \ref{ass:probs_positive} and having transition probabilities defined in Eq.~\eqref{eq:full_transition_probs}. Then, for any initial condition $W_0$ we have
\begin{subequations}
\begin{align}
    \lim_{n \to \infty} \frac{|W_{n}|_\metric}{n} &= \gamma \quad a.s. \label{eq:main_lln}\\
    \frac{|W_{n}|_\metric - \gamma n}{\sqrt{n}} &\to \mathcal{N}(0, \sigma^2) \quad \text{in law}. \label{eq:main_clt}
\end{align}
\end{subequations}
The constants $\gamma$ and $\sigma^2$ are defined in Eq.~\eqref{eq:gamma_sigma} and $\mathcal{N}(0, \sigma^2)$ denotes the normal distribution with mean $0$ and variance $\sigma^2$.
\end{theorem}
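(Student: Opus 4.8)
\emph{Proof strategy.} The plan is to exhibit a regeneration (renewal) structure for $\{W_n\}$ so that $|W_n|_\metric$ is, up to a bounded error, a renewal--reward process whose i.i.d.\ increments have exponential moments, to deduce \eqref{eq:main_lln} and \eqref{eq:main_clt} from the renewal law of large numbers and the renewal central limit theorem, and then to match the resulting constants with \eqref{eq:gamma_sigma}. The first thing I would record is the probabilistic meaning of Proposition~\ref{prop:R_properties}: $R_{i,j}^{(k)}(\lambda)=E_{e_i}\!\bigl[\lambda^{\sigma};\,\sigma<\infty\bigr]$, where $\sigma$ is the first time the word equals the single generator $A_{i,j}^{(k)}$ for the chain started at $e_i$. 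The system \eqref{eq:fund_R_eqs} is precisely its first-step decomposition --- using Lemma~\ref{lemma:intermediate_words}\ref{lemma:intermediate_words_markov} to recast a first passage from a word $w$ to a word $wA_{i,j}^{(k)}$ as a first passage out of an identity --- and property~\ref{eq:R_property_radius} says exactly that these first-passage times have exponential moments, which is what will make the renewal blocks integrable near $1$.

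Next I would establish stabilization: since $N\ge3$ and every $p_{i,j}^{(k)}\in(0,1)$, the chain is transient on the (tree-like) graph of reduced words, so a.s.\ $W_n$ converges prefix by prefix to an infinite reduced word and $|W_n|_\metric\to\infty$. I would then define regeneration times $\theta_0<\theta_1<\theta_2<\cdots$: informally, $\theta_m$ is a time at which one more maximal block of letters alternating in the upper index $k$ (as in Lemma~\ref{lemma:plane_switching}) has just been permanently appended to the reduced word, and the trajectory thereafter stays strictly ``above'' $W_{\theta_m}$. By the strong Markov property together with Lemma~\ref{lemma:intermediate_words}\ref{lemma:intermediate_words_markov}, the pairs $(\eta_m,\xi_m):=\bigl(\theta_m-\theta_{m-1},\,|W_{\theta_m}|_\metric-|W_{\theta_{m-1}}|_\metric\bigr)$ are independent and identically distributed for $m\ge2$. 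The key algebraic point is then that the block generating function $U(\lambda,z)=E\bigl[\lambda^{\eta_m}z^{\xi_m}\bigr]$ and the Green's function of the concatenated process are assembled from the $R_{i,j}^{(k)}$ with the $+1/-1$ alternation encoded exactly by the product $\ismat{B}(1;\lambda,z)\,\ismat{B}(-1;\lambda,z)$ of \eqref{eq:B_matrix_intro}; summing the geometric series over blocks yields the matrix $\bigl(\idmat-\ismat{B}(1;\lambda,z)\ismat{B}(-1;\lambda,z)\bigr)^{-1}$, so its determinant $h(\lambda,z)$ carries the dominant singularity. Because the blocks are a.s.\ finite, at $z=1$ the relevant generating function has its pole at $\lambda=1$, forcing $h(1,1)=0$; and since $1$ is then a simple Perron--Frobenius eigenvalue of $\ismat{B}(1)\ismat{B}(-1)$ at $(1,1)$, one has $h(\lambda,z)=c(\lambda,z)\bigl(1-U(\lambda,z)\bigr)$ near $(1,1)$ with $c$ analytic and nonvanishing, whence $\partial_\lambda h(1,1)\neq0$.

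Granting this, the limits are routine. The renewal LLN gives $|W_n|_\metric/n\to E[\xi]/E[\eta]$ a.s., and the renewal CLT gives $\bigl(|W_n|_\metric-(E[\xi]/E[\eta])\,n\bigr)/\sqrt n\Rightarrow\mathcal N\!\bigl(0,\operatorname{Var}(\xi-(E[\xi]/E[\eta])\eta)/E[\eta]\bigr)$. Differentiating $h=c\,(1-U)$ at $(1,1)$ and using $E[\eta]=\partial_\lambda U(1,1)$, $E[\xi]=\partial_z U(1,1)$, $E[\eta(\eta-1)]=\partial_\lambda^2U(1,1)$, and so on, the identity $E[\xi]=\gamma\,E[\eta]$ (i.e.\ $\partial_z U(1,1)=\gamma\,\partial_\lambda U(1,1)$) makes all terms involving derivatives of $c$, as well as all ``square'' terms $(\partial_zU)^2,\ \partial_zU\,\partial_\lambda U,\ (\partial_\lambda U)^2$, cancel in pairs, leaving exactly $\gamma=\partial_z h(1,1)/\partial_\lambda h(1,1)$ and the formula \eqref{eq:sigma_intro} for $\sigma^2$; the extra summands $\partial_z h(1,1)$ and $\gamma^2\partial_\lambda h(1,1)$ there are simply the factorial-moment corrections $E[\xi(\xi-1)]\to\operatorname{Var}(\xi)$ and $E[\eta(\eta-1)]\to\operatorname{Var}(\eta)$. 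Finally, for an arbitrary initial condition $W_0=x$, Lemma~\ref{lemma:intermediate_words}\ref{lemma:intermediate_words_markov} gives that $x^{-1}W_n$ under $P_{e_{\target(x)}}$ has the law of $W_n$, while $\bigl|\,|W_n|_\metric-|x^{-1}W_n|_\metric\,\bigr|\le|x|_\metric$ is a fixed constant, hence negligible for both \eqref{eq:main_lln} and \eqref{eq:main_clt}.

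The main obstacle, I expect, is the middle step: setting up the regeneration times so that the blocks are genuinely i.i.d.\ in spite of the fact that the one-step transition law depends on the plane $k$ of the current last letter, and then proving the exact identification of the block and Green's-function data with $h(\lambda,z)=\det[\idmat-\ismat{B}(1;\lambda,z)\ismat{B}(-1;\lambda,z)]$, including $h(1,1)=0$, $\partial_\lambda h(1,1)\neq0$, and $\sigma^2>0$. Matching the combinatorics of the $\pm1$ alternation to the matrix product, checking that the radius of analyticity stays above $1$ uniformly near $(1,1)$ (which is where Proposition~\ref{prop:R_properties}\ref{eq:R_property_radius} is used), and handling the Perron--Frobenius perturbation there, is where the real work lies; the renewal conclusions and the reduction to $W_0=e_i$ are then standard.
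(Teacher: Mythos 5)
Your proposal takes a genuinely different route from the paper: rather than invoking the Sawyer--Steger theorem (the paper's Theorem~\ref{thm:sawyer}) as a black box, you aim to construct a regeneration structure directly and deduce the LLN/CLT from classical renewal theory, matching constants afterwards. This is a legitimate and well-motivated strategy --- it is essentially the idea that underlies the \emph{proof} of Sawyer--Steger --- and your observations about the probabilistic meaning of $R_{i,j}^{(k)}$, the invariance of the $(\gamma,\sigma^2)$ formulas under multiplication of $g$ by an analytic nonvanishing factor, and the reduction to $W_0=e_i$ via Lemma~\ref{lemma:intermediate_words}\ref{lemma:intermediate_words_markov} are all correct. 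What the paper gains by packaging the renewal argument into Theorem~\ref{thm:sawyer} is that the hypotheses reduce to the purely analytic facts supplied by Propositions~\ref{prop:radii_of_convergence}, \ref{prop:A_matrix_properties}, and \ref{prop:G_in_terms_of_A}, so no regeneration structure ever needs to be exhibited.

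The gap is exactly where you flag it, and it is substantial, not cosmetic. First, the regeneration times $\theta_m$ are not stopping times: ``the trajectory thereafter stays strictly above $W_{\theta_m}$'' is a future event, so i.i.d.-ness of the blocks $(\eta_m,\xi_m)$ requires the standard cone-point/ladder-epoch machinery (conditioning on the first escape never returning, or an Athreya--Ney-style construction), and the transition law's dependence on both the window $j$ and the plane $k$ of the last letter forces the regeneration to return the chain to a fixed \emph{type}, so one must also show such returns have geometric tails. That is precisely the content the paper extracts from Proposition~\ref{prop:transience} and the exponential bound on $P_{e_i}(W_n=e_i)$ inside the proof of Proposition~\ref{prop:radii_of_convergence}; your sketch assumes it rather than reproving it. Second, $h(\lambda,z)=\det\left[\idmat-\ismat{B}(1;\lambda,z)\ismat{B}(-1;\lambda,z)\right]$ is the determinant of an $N\times N$ matrix; the factorization $h=c\cdot(1-U)$ with $c(1,1)\neq0$ and $U$ the scalar block generating function is not automatic. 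What is available from Perron--Frobenius/Kato is $h=(1-\rho)\cdot c$ with $\rho(\lambda,z)$ the simple PF eigenvalue, and one then needs to argue that $1-U$ and $1-\rho$ vanish on the same analytic curve near $(1,1)$ and that $U$ extends analytically past $(1,1)$, so that the second-order implicit-function data (and hence $\sigma^2$) agree. This identification is essentially the content of Sawyer--Steger's proof; as written, your proposal leaves it as the acknowledged hard step rather than closing it.
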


\begin{remark}
\label{remark:gilch}
Gilch \cite{gilch2008} studies similar problems in the context of random walks on regular languages. A random walk on a regular language as defined in \cite{Lalley2001} is a Markov chain on the set of all finite words from a finite alphabet with the following conditions. In one jump only the last two letters of a word may be modified and at most one letter may be adjoined or deleted. The transition probabilities only depend on the last two letters of the current word. Our process $\{W_{n}\}_{n \geq 0}$ is a random walk on a regular language formed by the alphabet $\genset{N}$.

Theorem~2.4 in \cite{gilch2008} provides a law of large numbers under the assumption that the considered random walk is transient. This could potentially lead to another way to obtain the constant $\gamma$ in Theorem~\ref{thm:main}. However, the identification of the constant appearing in Theorem~2.4 of \cite{gilch2008} requires the solution of a more complicated problem than in our case.
\end{remark}

%%%%%%%%%%%%%%%%%%%%%%%%%%%%%%%%%%%%%%%%%%%%%%%%%%%%%%%%%%%%%%%%%%%%%%%%%%%%%%%%%%
%%%%%%%%%%%%%%%%%%%%%%%%%%%%%%%%%%%%%%%%%%%%%%%%%%%%%%%%%%%%%%%%%%%%%%%%%%%%%%%%%%
%%%%%%%%%%%%%%%%%%%%%%%%%%%%%%%%%%%%%%%%%%%%%%%%%%%%%%%%%%%%%%%%%%%%%%%%%%%%%%%%%%
\section{Examples}
\label{sec:examples}
%%%%%%%%%%%%%%%%%%%%%%%%%%%%%%%%%%%%%%%%%%%%%%%%%%%%%%%%%%%%%%%%%%%%%%%%%%%%%%%%%%
%%%%%%%%%%%%%%%%%%%%%%%%%%%%%%%%%%%%%%%%%%%%%%%%%%%%%%%%%%%%%%%%%%%%%%%%%%%%%%%%%%
%%%%%%%%%%%%%%%%%%%%%%%%%%%%%%%%%%%%%%%%%%%%%%%%%%%%%%%%%%%%%%%%%%%%%%%%%%%%%%%%%%

Here we demonstrate several applications of Theorem~\ref{thm:main}. We will consider two simple metrics and compute the constants in Eqs.~\eqref{eq:gamma_intro} and \eqref{eq:sigma_intro}. The first is the metric $|\cdot|$ defined in Eq.~\eqref{eq:word_length_metric_condition}; the second is the metric $| \cdot |_{\metrictwo}$ generated by $\genset{N}$ defined for the generators as
\begin{equation}
    |A_{i, j}^{(k)}|_{\metrictwo} = |i - j|.
\end{equation}

%%%%%%%%%%%%%%%%%%%%%%%%%%%%%%%%%%%%%%%%%%%%%%%%%%%%%%%%%%%%%%%%%%%%%%%%%%%%%%%%%%
%%%%%%%%%%%%%%%%%%%%%%%%%%%%%%%%%%%%%%%%%%%%%%%%%%%%%%%%%%%%%%%%%%%%%%%%%%%%%%%%%%
\subsection{The one parameter case with \texorpdfstring{$N = 3$}{N = 3}}
%%%%%%%%%%%%%%%%%%%%%%%%%%%%%%%%%%%%%%%%%%%%%%%%%%%%%%%%%%%%%%%%%%%%%%%%%%%%%%%%%%
%%%%%%%%%%%%%%%%%%%%%%%%%%%%%%%%%%%%%%%%%%%%%%%%%%%%%%%%%%%%%%%%%%%%%%%%%%%%%%%%%%

We take $N = 3$ and the set of transition probabilities
\begin{align*}
p_{2, 1}^{(1)} = p_{2, 3}^{(1)} = p_{2, 1}^{(-1)} = p_{2, 3}^{(-1)} &= 1/4, \\
p_{1, 2}^{(1)} = p_{3, 2}^{(1)} = p_{1, 2}^{(-1)} = p_{3, 2}^{(-1)} &= q, \\
p_{1, 3}^{(1)} = p_{3, 1}^{(1)} = p_{1, 3}^{(-1)} = p_{3, 1}^{(-1)} &= 1/2 - q ,
\end{align*}
for $0 < q < 1/2$.  This represents a situation where the planar domain of Section~\ref{sec:motivation} is left-right and up-down symmetric about its center.  Let $\gamma_3(q), \sigma_3^2(q)$ be the constants appearing in Theorem~\ref{thm:main} for the $|\cdot|$ metric and let $\gamma_{3, \metrictwo}(q), \sigma_{3, \metrictwo}^2(q)$ be the same constants for the $|\cdot|_{\metrictwo}$ metric. We will show that
\begin{subequations}
\begin{align}
\gamma_3(q) &= \frac{3q + (1 - 4q)\sqrt{(8 - 7q)q}}{4(1 - 4q^2)},  \label{eq:3_window_natural_metric_gamma} \\
\gamma_{3, \metrictwo}(q) &= \frac{\sqrt{(8 - 7q)q} - q}{2(2q + 1)},  \label{eq:3_window_F_metric_gamma} \\
    \sigma^2_{3}(q) &= \tfrac{4 (8 + 5 Q) + (68 - 56 Q) q + (500 - 101 Q) q^2 - (1471 +
    64 Q) q^3 + 8 (1 + 42 Q) q^4 + 728 q^5}{8 (1 + 2 q)^3 (8 - 23 q + 14 q^2)}, \label{eq:3_window_natural_metric_sigma} \\
    \sigma^2_{3, \metrictwo}(q) &= \tfrac{(32 + 4 Q) + (36 + 28 Q) q + (80 - 21 Q) q^2 - (199 + 30 Q) q^3 +
 70 q^4}{2 (1 + 2 q)^3 Q^2}, \label{eq:3_window_F_metric_sigma}
\end{align}
\end{subequations}
where $Q = \sqrt{(8 - 7q)q}$. The constants $\gamma_3(q), \gamma_{3, \metrictwo}(q)$ are plotted in Figure~\ref{fig:3_window_gamma_plot} and the constants $\sigma^2_{3}(q), \sigma^2_{3, \metrictwo}(q)$ are plotted in Figure~\ref{fig:3_window_sigma_plot}.

By our choice of probabilities, Eq.~\eqref{eq:fund_R_eqs} reduces to a set of three equations by symmetry. We define
\begin{align*}
R_1 &\defn R_{2, 1}^{(1)} = R_{2, 3}^{(1)} = R_{2, 1}^{(-1)} = R_{2, 3}^{(-1)},  \\
R_2 &\defn R_{1, 2}^{(1)} = R_{3, 2}^{(1)} = R_{1, 2}^{(-1)} = R_{3, 2}^{(-1)},  \\
R_3 &\defn R_{1, 3}^{(1)} = R_{3, 1}^{(1)} = R_{1, 3}^{(-1)} = R_{3, 1}^{(-1)},
\end{align*}
which leads to
\begin{subequations}
\label{eq:3_window_r_eqs}
\begin{align}
R_1 &= \tfrac14\lambda\left[1 + R_3 + 2 R_2 R_1 \right], \\
R_2 &= \lambda\left[q + \left(\tfrac{1}{2} - q \right)R_2 + q R_1 R_2 + \left(\tfrac{1}{2} - q \right)R_3 R_2  \right],\\
R_3 &= \lambda \left[\tfrac{1}{2} - q + q R_1 + q R_1 R_3 + \left(\tfrac{1}{2} - q \right)R_3^2 \right].
\end{align}
\end{subequations}
Substituting $\lambda = 1$ gives a cubic equation for $R_2(1)$. We choose the solution which satisfies $R_2(1) \in (0, 1)$ (by property~\ref{eq:R_property_physical}). By implicit differentiation of Eq.~\eqref{eq:3_window_r_eqs} it follows that
\begin{subequations}
\label{eq:one_parameter_R_vals}
\begin{alignat}{3}
R_1(1) &= \tfrac{1}{2};  &&R_1'(1) &&= \frac{3q + 2 + Q}{2(Q - q)} \label{eq:one_parameter_R_vals_1} \\
R_2(1) &= \frac{3q - Q}{2(2q - 1)}; &&R_2'(1) &&= \frac{2(q + 1)}{Q} \label{eq:one_parameter_R_vals_2} \\
R_3(1) &= \frac{q - 2 + Q}{2(2q - 1)};  \quad && R_3'(1) &&= \frac{2Q + q(Q - 6 + q(5 - 4q - 4Q))}{q(2q - 1)(Q + 7q - 8)} \label{eq:one_parameter_R_vals_3}
\end{alignat}
\begin{align}
R_1''(1) &= \tfrac{4 Q + 16 (2 + Q) q +  2 (42 + 19 Q) q^2  (26 + 31 Q) q^3 -
 3 (53 + 4 Q) q^4 + 84 q^5}{4 (1 - q)^2 q^2 Q^2}, \\
 R_2''(1) &= \tfrac{8 (3 + Q) + (25 Q - 12) q +  (49 + 4 Q) q^2 - 4 (9 + 7 Q) q^3 -
 16 q^4}{(1 - q) Q^3}, \\
 R_3''(1) &= \tfrac{-16 (1 + Q) + (48 - 34 Q) q + 6 (23 Q -43 ) q^2 +
 2 (329 + 18 Q) q^3 - (176 + 137 Q) q^4 + (28 Q -541) q^5 + 300 q^6}{2  (1 - q)^2  (2 q - 1) Q^3}.
\end{align}
\end{subequations}
Since our $p_{i, j}^{(k)}$ are symmetric with respect to $k = \pm 1$, in the $|\cdot| $ metric, we construct
\begin{equation} \label{eq:Bmat_3_window_K}
\ismat{B}(1; \lambda, z) = \ismat{B}(-1;\lambda, z) = \begin{pmatrix}
0 &  z R_2 & z R_3 \\
z R_1 &  0 & z R_1 \\
z R_3 & z R_2 & 0
\end{pmatrix},
\end{equation}
so that
\begin{equation}
h(\lambda, z) = (1 - z^2 R_3^2) \left[(1 - 2 z^2 R_1 R_2)^2 - z^2 R_3^2 \right],
\end{equation}
Substituting this into Eqs.~\eqref{eq:gamma_intro} and \eqref{eq:sigma_intro} and using Eq.~\eqref{eq:one_parameter_R_vals} gives the required constants. Similarly, in the $|\cdot|_{\metrictwo}$ metric, we have
\begin{equation} \label{eq:Bmat_3_window_W}
\ismat{B}(1; \lambda, z) = \ismat{B}(-1;\lambda, z)  = \begin{pmatrix}
0 &  z R_2 & z^2 R_3 \\
z R_1 &  0 & z R_1 \\
z^2 R_3 & z R_2 & 0
\end{pmatrix}.
\end{equation}
Hence, we find
\begin{equation}
h(\lambda, z) = (1 - z^4 R_3^2) \left[(1 - 2 z^2 R_1 R_2)^2 - z^4 R_3^2 \right].
\end{equation}
We again substitute this into Eqs.~\eqref{eq:gamma_intro} and \eqref{eq:sigma_intro} and use Eq.~\eqref{eq:one_parameter_R_vals} to obtain the required constants.
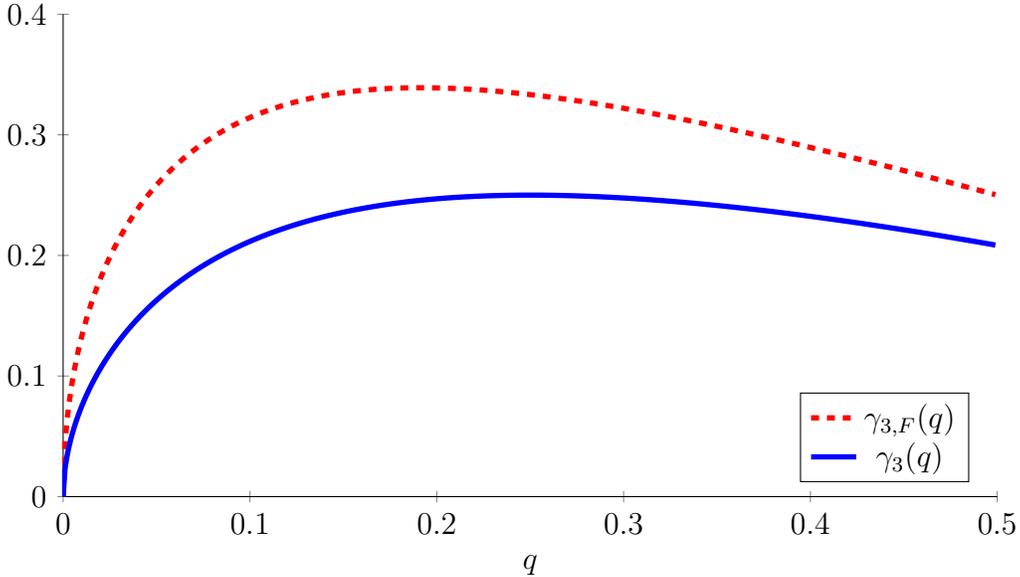
\begin{figure}[ht]
    \centering
\begin{tikzpicture}
\begin{axis}[
    height = 8cm,
    width = 14cm,
    xmin = 0,
    xmax = 0.5,
    ymin = 0,
    ymax = 0.4,
    axis lines = left,
    xlabel = $q$,
    xtick = {0, 0.1, 0.2, 0.3, 0.4, 0.5},
    ytick = {0, 0.1, 0.2, 0.3, 0.4},
    legend pos = south east,
    axis line style={-}
]

\addplot[line width = 2pt, dashed, color = red]
table[col sep = comma]{plotdata/gamma3Fdata.csv};
\addlegendentry{$\gamma_{3, \metrictwo}(q)$ }

\addplot[line width = 2pt, solid, color = blue]
table[col sep = comma]{plotdata/gamma3data.csv};
\addlegendentry{$\gamma_3(q)$}

\end{axis}
\end{tikzpicture}
\caption{The constants $\gamma_3(q)$ of Eq. \eqref{eq:3_window_natural_metric_gamma}  and $\gamma_{3, \metrictwo}(q)$ of Eq. \eqref{eq:3_window_F_metric_gamma}. We note that $\gamma_3(q)$ has a maximum value of $1/4$ when $q = 1/4$ and $\gamma_{3, \metrictwo}(q)$ has a maximum value of $(2/23)(2 \sqrt{6} - 1)$ when $q = (8 - \sqrt{6})/29$.}
\label{fig:3_window_gamma_plot}
\end{figure}

\begin{figure}
     \centering
     \begin{subfigure}[t]{0.45\linewidth}
        \centering
        \begin{tikzpicture}
        \begin{axis}[
            height = 5cm,
            width = 8cm,
            xmin = 0,
            xmax = 0.5,
            ymin = 0.5,
            ymax = 0.7,
            axis lines = left,
            xlabel = $q$,
            xtick = {0, 0.1, 0.2, 0.3, 0.4, 0.5},
            ytick = {0.5, 0.6, 0.7},
            % legend pos = south east,
            axis line style={-}
        ]

        \addplot[line width = 2pt, solid, color = black]
        table[col sep = comma]{plotdata/variance3q.csv};

        \end{axis}
        \end{tikzpicture}

        \caption{}
        \label{fig:3_window_sigma_plot_nat}
     \end{subfigure}
    %  \hfill
     \begin{subfigure}[t]{0.45\linewidth}
        \centering
        \begin{tikzpicture}
        \begin{axis}[
            height = 5cm,
            width = 8cm,
            xmin = 0,
            xmax = 0.5,
            ymin = 0.7,
            ymax = 2.5,
            axis lines = left,
            xlabel = $q$,
            xtick = {0, 0.1, 0.2, 0.3, 0.4, 0.5},
            ytick = {1.0, 1.5, 2.0, 2.5},
            % legend pos = south east,
            axis line style={-}
        ]

        \addplot[line width = 2pt, solid, color = black]
        table[col sep = comma]{plotdata/variance3qF.csv};

        \coordinate (insetPosition) at (axis cs:0.5,2.5);
        \end{axis}

        %small plot
        \begin{axis}[
            at={(insetPosition)},
            anchor={outer north east},
            height = 3cm,
            width = 4cm,
            ymax = 2.018,
            ytick = {2, 2.006, 2.012, 2.018},
            xlabel = $q \times 10^3$,
            x label style={font = \tiny, at={(axis description cs:0.5,-0.25)},anchor=north},
            % label style={font=\tiny},
            xticklabel style={font = \tiny},
            yticklabel style={font = \tiny, /pgf/number format/.cd,fixed zerofill,precision=3},
            axis lines = left,
            axis line style={-}
        ]

        \addplot[line width = 1.5pt, solid, color = black]
        table[col sep = comma]{plotdata/variance3qFMIN.csv};

        \end{axis}
        \end{tikzpicture}

        \caption{}
        \label{fig:3_window_sigma_plot_F}
     \end{subfigure}

    \caption{(A) The quantity $\sigma_{3}^2(q)$ of Eq.~\eqref{eq:3_window_natural_metric_sigma}. We note that $\sigma_{3}^2(q)$ has a maximum value of $11/16$ when $q = 1/4$. (B) The quantity $\sigma_{3, \metrictwo}^2(q)$ of Eq.~\eqref{eq:3_window_F_metric_sigma}. We note that $\sigma_{3, \metrictwo}^2(q)$ has a very slight maximum value of $2.01584\dots$ when $q = 0.00205319\dots$ (inset). The exact value of $q$ for which $\sigma_{3, \metrictwo}^2(q)$ is maximized is a root of a certain eighth-order polynomial.}
    \label{fig:3_window_sigma_plot}
\end{figure}
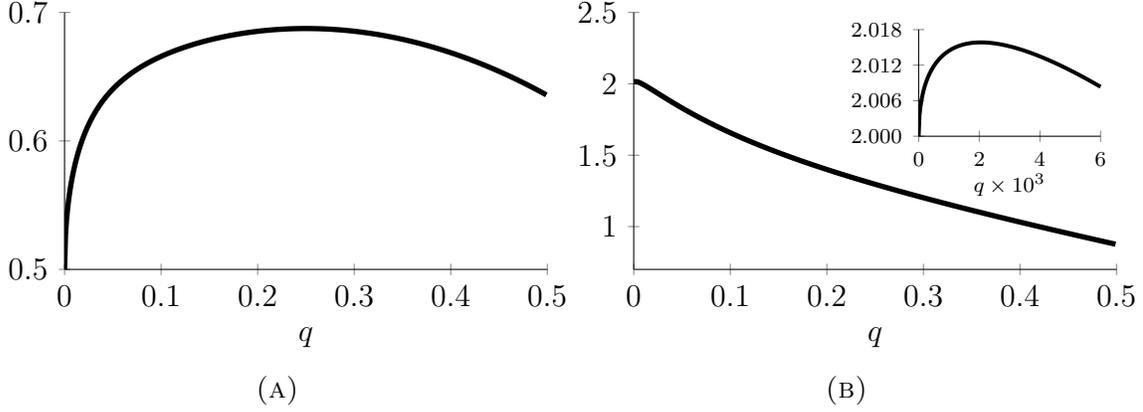
%%%%%%%%%%%%%%%%%%%%%%%%%%%%%%%%%%%%%%%%%%%%%%%%%%%%%%%%%%%%%%%%%%%%%%%%%%%%%%%%%%
%%%%%%%%%%%%%%%%%%%%%%%%%%%%%%%%%%%%%%%%%%%%%%%%%%%%%%%%%%%%%%%%%%%%%%%%%%%%%%%%%%
\subsection{An asymmetric case with \texorpdfstring{$N = 3$}{N = 3}}
%%%%%%%%%%%%%%%%%%%%%%%%%%%%%%%%%%%%%%%%%%%%%%%%%%%%%%%%%%%%%%%%%%%%%%%%%%%%%%%%%%
%%%%%%%%%%%%%%%%%%%%%%%%%%%%%%%%%%%%%%%%%%%%%%%%%%%%%%%%%%%%%%%%%%%%%%%%%%%%%%%%%%

We take $N = 3$ and the following arbitrarily chosen set of probabilities:
\begin{alignat*}{7}
p_{2, 1}^{(1)} &= 17/40; \quad &&p_{2, 3}^{(1)} &&= 1/5; \quad  &&p_{2, 1}^{(-1)} &&= 1/8; \quad &&p_{2, 3}^{(-1)} &&= 1/4 \\
p_{1, 2}^{(1)} &= 43/70; \quad &&p_{3, 2}^{(1)} &&= 43/72; \quad &&p_{1, 2}^{(-1)} &&= 1/7; \quad &&p_{3, 2}^{(-1)} &&= 1/8 \\
p_{1, 3}^{(1)} &= 1/10; \quad &&p_{3, 1}^{(1)} &&= 1/9; \quad &&p_{1, 3}^{(-1)} &&= 1/7; \quad &&p_{3, 1}^{(-1)} &&= 1/6.
\end{alignat*}
Truncating to six significant digits, solving Eq.~\eqref{eq:fund_R_eqs} numerically gives, for $r_{i, j}^{(k)} \defn R_{i, j}^{(k)}(1)$,
\begin{alignat*}{7}
r_{2, 1}^{(1)} &= 0.591572; \quad &&r_{2, 3}^{(1)} &&= 0.404666; \quad  &&r_{2, 1}^{(-1)} &&= 0.388890; \quad &&r_{2, 3}^{(-1)} &&= 0.579542 \\
r_{1, 2}^{(1)} &= 0.769190; \quad &&r_{3, 2}^{(1)} &&= 0.791039; \quad &&r_{1, 2}^{(-1)} &&= 0.305398; \quad &&r_{3, 2}^{(-1)} &&= 0.245890 \\
r_{1, 3}^{(1)} &= 0.386687; \quad &&r_{3, 1}^{(1)} &&= 0.538119; \quad &&r_{1, 3}^{(-1)} &&= 0.387184; \quad &&r_{3, 1}^{(-1)} &&= 0.300936,
\end{alignat*}
and, for $d_{i, j}^{(k)} \defn \text{d} R_{i, j}^{(k)}(\lambda)/\text{d}\lambda |_{\lambda = 1}$,
\begin{alignat*}{7}
d_{2, 1}^{(1)} &= 1.36978; \quad &&d_{2, 3}^{(1)} &&= 1.37284; \quad  &&d_{2, 1}^{(-1)} &&= 2.05937; \quad &&d_{2, 3}^{(-1)} &&= 2.69097 \\
d_{1, 2}^{(1)} &= 1.44102; \quad &&d_{3, 2}^{(1)} &&= 1.71828; \quad &&d_{1, 2}^{(-1)} &&= 1.31219; \quad &&d_{3, 2}^{(-1)} &&= 0.991008 \\
d_{1, 3}^{(1)} &= 1.56411; \quad &&d_{3, 1}^{(1)} &&= 1.99059; \quad &&d_{1, 3}^{(-1)} &&= 2.01524; \quad &&d_{3, 1}^{(-1)} &&= 1.19855,
\end{alignat*}
and, for $v_{i, j}^{(k)} \defn \text{d}^2 R_{i, j}^{(k)}(\lambda)/\text{d}\lambda^2 |_{\lambda = 1}$,
\begin{alignat*}{7}
v_{2, 1}^{(1)} &= 10.3365; \quad &&v_{2, 3}^{(1)} &&= 12.8916; \quad  &&v_{2, 1}^{(-1)} &&= 26.2278; \quad &&v_{2, 3}^{(-1)} &&= 32.1490 \\
v_{1, 2}^{(1)} &= 9.45100; \quad &&v_{3, 2}^{(1)} &&= 13.7182; \quad &&v_{1, 2}^{(-1)} &&= 15.3088; \quad &&v_{3, 2}^{(-1)} &&= 11.6415 \\
v_{1, 3}^{(1)} &= 15.5010; \quad &&v_{3, 1}^{(1)} &&= 18.8416; \quad &&v_{1, 3}^{(-1)} &&= 26.1337; \quad &&v_{3, 1}^{(-1)} &&= 14.3857.
\end{alignat*}

Here, $\ismat{B}(1; \lambda, z)$ and $\ismat{B}(- 1; \lambda, z)$ will be different; however they are each $3 \times 3$ matrices with the same structure as Eqs.~\eqref{eq:Bmat_3_window_K} and \eqref{eq:Bmat_3_window_W} in the corresponding metric. Via Eqs.~\eqref{eq:gamma_intro} and \eqref{eq:sigma_intro} we find the constants $\gamma_{\text{a}}, \sigma^2_{\text{a}}$ in the $|\cdot|$ metric and $\gamma_{\text{a}, \metrictwo}, \sigma^2_{\text{a}, \metrictwo}$ in the $|\cdot|_{\metrictwo}$ metric to be
\begin{subequations}
\begin{alignat}{2}
\gamma_{\text{a}} &= 0.272913\dots  \quad \sigma^2_{\text{a}} &&= 0.587598\dots \\
\gamma_{\text{a}, \metrictwo} &= 0.334211\dots \quad \sigma^2_{\text{a}, \metrictwo} &&= 0.916276\dots
\end{alignat}
\end{subequations}
%%%%%%%%%%%%%%%%%%%%%%%%%%%%%%%%%%%%%%%%%%%%%%%%%%%%%%%%%%%%%%%%%%%%%%%%%%%%%%%%%%
%%%%%%%%%%%%%%%%%%%%%%%%%%%%%%%%%%%%%%%%%%%%%%%%%%%%%%%%%%%%%%%%%%%%%%%%%%%%%%%%%%
\subsection{The totally symmetric case}
%%%%%%%%%%%%%%%%%%%%%%%%%%%%%%%%%%%%%%%%%%%%%%%%%%%%%%%%%%%%%%%%%%%%%%%%%%%%%%%%%%
%%%%%%%%%%%%%%%%%%%%%%%%%%%%%%%%%%%%%%%%%%%%%%%%%%%%%%%%%%%%%%%%%%%%%%%%%%%%%%%%%%

For any $N \geq 3$, we take $p_{i, j}^{(k)} = 1/(2N - 2)$ for all $(i, j, k)$. Let $\gamma_\text{sym}, \sigma_\text{sym}^2$ be the constants appearing in Theorem~\ref{thm:main} for the $|\cdot|$ metric and let $\gamma_{\text{sym}, \metrictwo}, \sigma_{\text{sym}, \metrictwo}^2$ be the same constants for the $|\cdot|_{\metrictwo}$ metric. We will show that
\begin{subequations}
\begin{align}
    \gamma_{\text{sym}} &= \frac{N - 2}{2(N - 1)}, \label{eq:symmetric_natural_metric_gamma} \\
    \gamma_{\text{sym}, \metrictwo} &=  \frac{(N + 1)(N - 2)}{6(N - 1)} , \label{eq:symmetric_F_metric_gamma} \\
    \sigma^2_{\text{sym}} &= \frac{N^2 + 2N - 4}{4(N - 1)^2}, \label{eq:symmetric_natural_metric_sigma} \\
  \sigma^2_{\text{sym}, \metrictwo} &= \frac{11 N^5 - 2 N^4 + 15 N^3 - 36 N - 8}{180 N (N - 1)^2}. \label{eq:symmetric_F_metric_sigma}
\end{align}
\end{subequations}
Note that Eqs.~\eqref{eq:symmetric_natural_metric_gamma} and \eqref{eq:symmetric_F_metric_gamma} also hold for $N = 2$ since $\{W_n\}_{n \geq 0}$ is recurrent in this case. We will use the following lemma for the characteristic polynomial of a Kac--Murdock--Szeg{\H{o}} matrix \cite{kms1953}.
\begin{lemma} \label{lemma:det_kms}
Let $\ismat{U}_n(z)$ be an $n \times n$ matrix whose $i, j^{\text{th}}$ entry is $[\ismat{U}_{n}(z)]_{i, j}= z^{|i-j|}$. Let $\phi_n(x, z) = \textup{det}\left[\ismat{U}_n(z) - x \idmat\right]$. Then, defining $\phi_0(x, z) = 1$, we have
\begin{equation} \label{eq:kms_result}
    \phi_{n}(x, z) = (1 - x - z^2(1 + x)) \phi_{n - 1}(x, z) - x^2 z^2 \phi_{n - 2}(x, z); \phi_0(x, z) = 1, \phi_1(x, z) = 1 - x.
\end{equation}
\end{lemma}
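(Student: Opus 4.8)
The plan is to prove the three-term recursion by reducing $\phi_n(x,z) = \det[\ismat{U}_n(z) - x\idmat]$ to the analogous determinants of the top-left $(n-1)\times(n-1)$ and $(n-2)\times(n-2)$ principal submatrices, which are exactly $\ismat{U}_{n-1}(z)-x\idmat$ and $\ismat{U}_{n-2}(z)-x\idmat$. The structural fact that makes this work is that although $\ismat{U}_n(z)$ is a full matrix, consecutive rows are nearly proportional in their ``already seen'' coordinates: for $i \ge 2$ one has $[\ismat{U}_n(z)]_{i,j} = z\,[\ismat{U}_n(z)]_{i-1,j}$ for every $j \le i-1$, and the same holds for columns by symmetry. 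Hence subtracting $z$ times one row (column) from the next clears out almost everything, and the perturbation $-x\idmat$ contributes only a single extra term that is easy to track.

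Concretely, I would set $M \defn \ismat{U}_n(z) - x\idmat$ and perform two determinant-preserving elementary operations: first replace column $n$ by $(\text{column } n) - z\,(\text{column } n-1)$, then replace row $n$ by $(\text{row } n) - z\,(\text{row } n-1)$. A short direct computation — this is the only real bookkeeping in the proof — shows that the resulting matrix $M''$ has: its top-left $(n-1)\times(n-1)$ block untouched (so equal to $\ismat{U}_{n-1}(z)-x\idmat$); its last column equal to zero except for the entry $zx$ in row $n-1$ and the entry $1 - x - z^2(1+x)$ in row $n$; and, by the symmetry of the construction, its last row zero except for the entry $zx$ in column $n-1$ and $1-x-z^2(1+x)$ in column $n$.

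Next I would expand $\det M'' = \phi_n(x,z)$ along the sparse last column. The diagonal contribution is $\bigl(1 - x - z^2(1+x)\bigr)$ times the minor obtained by deleting row $n$ and column $n$, i.e.\ $\bigl(1 - x - z^2(1+x)\bigr)\phi_{n-1}(x,z)$. The remaining contribution, from the entry $zx$ in position $(n-1,n)$, is (up to the cofactor sign $(-1)^{2n-1}=-1$) the quantity $zx$ times a minor whose own last row is $(0,\dots,0,zx)$; expanding that minor once more along its last row leaves the top-left $(n-2)\times(n-2)$ block of $M''$, namely $\ismat{U}_{n-2}(z)-x\idmat$, and contributes $zx\cdot\phi_{n-2}(x,z)$. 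Combining the signs yields $\phi_n = \bigl(1-x-z^2(1+x)\bigr)\phi_{n-1} - x^2 z^2\,\phi_{n-2}$, as claimed. The initialization is immediate: $\phi_0(x,z) = 1$ is the empty determinant and $\phi_1(x,z) = \det[1-x] = 1-x$; one also checks the two elementary operations and the two cofactor expansions are legitimate for every $n \ge 2$, reading $\ismat{U}_0(z)$ as the empty matrix (and $n=2$ can be verified against $(1-x)^2-z^2$ as a sanity check).

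I do not expect a genuine obstacle here. The only points requiring care are verifying which entries survive the two elementary operations and tracking the cofactor signs in the two expansions; both are routine. An alternative would be to invoke the classical fact that the inverse of a Kac--Murdock--Szeg\H{o} matrix is tridiagonal and extract a three-term recursion from Jacobi-matrix / continued-fraction theory, but the elementary reduction above is shorter and self-contained.
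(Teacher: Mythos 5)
Your proof is correct and takes essentially the same approach as the paper: two determinant-preserving elementary operations (subtracting $z$ times an adjacent row and column) followed by two cofactor expansions. The only cosmetic difference is that you work at the bottom-right corner of the matrix while the paper works at the top-left; since the Kac--Murdock--Szeg\H{o} matrix is persymmetric, these are mirror images of the same argument.
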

\begin{proof}[Proof of Lemma~\ref{lemma:det_kms}]
We multiply the second row of $\ismat{U}_n(z) - x\idmat$ by $-z$ and add it to the first row. Then, we multiply the second column of the resulting matrix by $-z$ and add it to the first column. The result is that
\begin{equation}
\phi_{n}(x, z) = \begin{vmatrix}
1 - x & z & z^2 & \dots \\
z & 1- x & z & \dots \\
z^2 & z & 1 - x & \dots \\
\vdots & \vdots & \vdots & \ddots
\end{vmatrix} = \begin{vmatrix}
1 - x - z^2(1 + x) & x z & 0 & \dots \\
x z & 1- x & z & \dots \\
0 & z & 1 - x & \dots \\
\vdots & \vdots & \vdots & \ddots
\end{vmatrix} .
\end{equation}
Expanding the determinant along the first column gives
\begin{align}
\phi_{n}(x, z) &= (1 - x - z^2(1 + x)) \phi_{n - 1}(x, z) - x z\begin{vmatrix}
xz & 0 & 0 & \dots \\
z & 1 - x & z & \dots \\
z^2 &  z & 1 - x & \dots \\
\vdots & \vdots & \vdots & \ddots
\end{vmatrix} \\
&= (1 - x - z^2(1 + x)) \phi_{n - 1}(x, z) - x^2 z^2 \phi_{n - 2}(x, z). \label{eq:kms_recurrence}\qedhere
\end{align}
\end{proof}
Since the transition probabilities are all the same for a given $N$, Eqs.~\eqref{eq:fund_R_eqs}, are invariant under interchange of any pair $(i_1, j_1, k_1)$, $(i_2, j_2, k_2)$. Therefore $R_{\text{sym}}(\lambda) \defn R_{i, j}^{(k)}(\lambda)$ satisfies a quadratic equation,
\begin{equation}
    R_{\text{sym}} = \frac{\lambda}{2(N-1)}\left[1 + (N - 2)R_{\text{sym}} + (N - 1)R_{\text{sym}}^2 \right].
\end{equation}
Substituting $\lambda = 1$ and taking the root such that $R_{\text{sym}}(1) \in (0, 1)$ gives
\begin{equation}
    R_{\text{sym}}(1) = \frac{1}{N - 1},\quad R_{\text{sym}}'(1) = \frac{2}{N - 2}, \quad R_{\text{sym}}''(1) = \frac{4(N^2 - 2)}{(N - 2)^3}.
\end{equation}
In the $|\cdot|$ metric, $\matinds{B}{i}{j}(k; \lambda, z) = (1 - \delta_{i, j})z R_{\text{sym}}(\lambda)$. Since $\ismat{B}(k; \lambda, z)$, does not depend on $k$, we will write $\ismat{H}(\lambda, z) \defn \ismat{B}(k; \lambda, z)$. By Lemma~\ref{lemma:det_kms},
\begin{equation}
    \text{det}\left[\idmat \pm \ismat{H}(\lambda, z)\right] = ( \pm z R_{\text{sym}}(\lambda))^N \phi_{N}\left(1 \mp \frac{1}{z R_{\text{sym}}(\lambda)} , 1 \right).
\end{equation}
Hence, we compute $h(\lambda, z) = \text{det}\left[\idmat + \ismat{H}(\lambda, z) \right] \text{det}\left[\idmat -  \ismat{H}(\lambda, z) \right]$. This can be computed up to terms of order $(z - 1)^2$ by substituting $z = 1$ into Eq.~\eqref{eq:kms_result}, solving the resulting linear recurrence relation and applying implicit differentiation. Via Eqs.~\eqref{eq:gamma_intro} and \eqref{eq:sigma_intro} we find the constants $\gamma_{\text{sym}}, \sigma^2_{\text{sym}}$ by expanding $\phi_N(x, z)$ in powers of $(z - 1)$.

In the $|\cdot|_{\metrictwo}$ metric, $\matinds{B}{i}{j} = (1 - \delta_{i, j}) z^{|i - j|} R_{\text{sym}}$. By Lemma  \ref{lemma:det_kms},
\begin{equation} \label{eq:detIminusB_W_metric}
 \det \left[\idmat \pm \ismat{H}(\lambda, z) \right] = (\pm R_{\text{sym}}(\lambda))^N \phi_{N}\left(1 \mp \frac{1}{R_{\text{sym}}(\lambda)}, z\right).
\end{equation}
We again have $h(\lambda, z) = \text{det}\left[\idmat + \ismat{H}(\lambda, z) \right]\, \text{det}\left[\idmat -  \ismat{H}(\lambda, z) \right]$ and the same arguments as above give $\gamma_{\text{sym}, \metrictwo}, \sigma^2_{\text{sym}, \metrictwo}$.

\begin{remark} \label{remark:symmetric_examples}
In the $|\cdot|$ metric, due to the symmetry of the problem, the process $ \{ |W_n|  \}_{n \geq 0}$ is a lazy nearest neighbor walk on the non-negative integers with transition probabilities
\begin{equation}
    P(|W_{n + 1}| = y \st |W_{n}| = x) = \begin{cases}
    1 & \text{if } x = 0 \text{ and } y = 1\\
    \frac{N - 1}{2(N - 1)} & \text{if } x \neq 0 \text{ and } y = x + 1 \\
    \frac{N - 2}{2(N - 1)} & \text{if } x \neq 0 \text{ and } y = x \\
    \frac{1}{2(N - 1)} & \text{if } x \neq 0 \text{ and } y = x - 1 \\
    0 & \text{otherwise}.
    \end{cases}
\end{equation}
This process is transient so Eqs.~\eqref{eq:symmetric_natural_metric_gamma} and \eqref{eq:symmetric_natural_metric_sigma} follow by direct computation.
\end{remark}

%%%%%%%%%%%%%%%%%%%%%%%%%%%%%%%%%%%%%%%%%%%%%%%%%%%%%%%%%%%%%%%%%%%%%%%%%%%%%%%%%%
%%%%%%%%%%%%%%%%%%%%%%%%%%%%%%%%%%%%%%%%%%%%%%%%%%%%%%%%%%%%%%%%%%%%%%%%%%%%%%%%%%
%%%%%%%%%%%%%%%%%%%%%%%%%%%%%%%%%%%%%%%%%%%%%%%%%%%%%%%%%%%%%%%%%%%%%%%%%%%%%%%%%%
\section{Outline of proof}
\label{sec:outline_of_proof}
%%%%%%%%%%%%%%%%%%%%%%%%%%%%%%%%%%%%%%%%%%%%%%%%%%%%%%%%%%%%%%%%%%%%%%%%%%%%%%%%%%
%%%%%%%%%%%%%%%%%%%%%%%%%%%%%%%%%%%%%%%%%%%%%%%%%%%%%%%%%%%%%%%%%%%%%%%%%%%%%%%%%%
%%%%%%%%%%%%%%%%%%%%%%%%%%%%%%%%%%%%%%%%%%%%%%%%%%%%%%%%%%%%%%%%%%%%%%%%%%%%%%%%%%

Our proof strategy uses the double generating function method of Sawyer and Steger (\cite{Sawyer1987}, Theorem~2.2):
\begin{theorem}[Sawyer and Steger] \label{thm:sawyer}
Let $\{Y_n\}_{n \geq 0}$ be a sequence of non-negative random variables and suppose that we can write for some $\delta > 0$
\begin{equation} \label{eq:sawyer_double}
    \genf{G}(\lambda, z) \defn E\left[\sum_{n = 0}^{\infty} z^{Y_n} \lambda^n \right] = \frac{C(\lambda, z)}{g(\lambda, z)} \quad \text{for } \lambda, z \in (1 - \delta, 1),
\end{equation}
where $C(\lambda, z), g(\lambda, z)$ can be extended as analytic functions to the regions $1 - \delta < |\lambda| < 1 + \delta$, $|z - 1| < \delta$ in the complex plane, and $C(1, 1) \neq 0$. Let
\begin{subequations}
\begin{align}
    \mu &\defn \frac{\partial_z g(1, 1)}{\partial_\lambda g(1, 1)},  \\
    \nu^2 &\defn \frac{\partial_z^2g(1, 1) + \partial_z g(1, 1) - 2 \mu \, \partial_{z, \lambda}^2 g(1, 1) + \mu^2 \, \left(\partial_\lambda^2g(1, 1) + \partial_\lambda g(1, 1) \right) }{\partial_\lambda g(1, 1)}.
\end{align}
\end{subequations}
Then
\begin{subequations}
\begin{align}
    \lim_{n \to \infty} \frac{Y_{n}}{n} &= \mu \quad a.s. \\
    \frac{Y_{n} - \mu n}{\sqrt{n}} &\to \mathcal{N}(0, \nu^2) \quad \text{in law}.
\end{align}
\end{subequations}
\end{theorem}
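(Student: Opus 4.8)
The plan is to reduce both assertions to a single precise asymptotic for the functions $f_n(z)\defn E[z^{Y_n}]$ extracted from the singularity structure of $\genf{G}(\lambda,z)=\sum_{n\ge0}f_n(z)\lambda^n=C(\lambda,z)/g(\lambda,z)$ in the variable $\lambda$, then to read off the law of large numbers by a Chernoff bound with Borel--Cantelli and the central limit theorem by characteristic functions. \emph{Step 1 (locating the dominant singularity).} Letting $z\uparrow1$ in $\genf{G}(\lambda,z)=C(\lambda,z)/g(\lambda,z)$ for fixed $\lambda\in(0,1)$, monotone convergence gives $\genf{G}(\lambda,z)\uparrow\sum_n\lambda^n=1/(1-\lambda)$, so $C(\lambda,1)/g(\lambda,1)=1/(1-\lambda)$; since $C(1,1)\neq0$ this forces $g(1,1)=0$ and $g(\lambda,1)=C(1,1)(1-\lambda)+o(1-\lambda)$, hence $\partial_\lambda g(1,1)=-C(1,1)\neq0$, so the denominators defining $\mu$ and $\nu^2$ are nonzero. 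After a standard reduction (cancelling common analytic factors of $C$ and $g$ and shrinking $\delta$) we may assume $\lambda=1$ is the unique zero of $g(\cdot,1)$ in a closed disk $\{|\lambda|\le r\}$ with $r\in(1,1+\delta)$, and that it is simple. The implicit function theorem yields an analytic $z\mapsto\rho(z)$ on a complex neighbourhood $V$ of $1$ with $\rho(1)=1$ and $g(\rho(z),z)=0$; shrinking $V$, $\rho(z)$ is the only zero of $g(\cdot,z)$ in $\{|\lambda|\le r\}$, it is simple, $C(\rho(z),z)\neq0$, and $g$ stays away from $0$ on $\{|\lambda|=r\}\times\overline V$. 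Writing $f_n(z)$ as a Cauchy integral, deforming the contour out to $|\lambda|=r$, and collecting the residue at $\lambda=\rho(z)$ gives, uniformly for $z\in V$,
\[
f_n(z)=A(z)\,\rho(z)^{-n}\bigl(1+O(\beta^n)\bigr),\qquad A(z)\defn\frac{-C(\rho(z),z)}{\rho(z)\,\partial_\lambda g(\rho(z),z)},
\]
with $A$ analytic on $V$, $A(1)=1$, and $\beta\defn\sup_{z\in V}\rho(z)/r<1$.

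\emph{Step 2 (law of large numbers).} Set $\psi(z)\defn-\log\rho(z)$, so $f_n(z)=A(z)e^{n\psi(z)}(1+O(\beta^n))$ with $\psi(1)=0$ and $\psi'(1)=-\rho'(1)=\mu$, the latter by differentiating $g(\rho(z),z)=0$. Fix $\epsilon>0$. For real $z\in(1-\delta_1,1)$, Markov's inequality for the non-negative variable $z^{Y_n}$ gives
\begin{align*}
P\bigl(Y_n\le(\mu-\epsilon)n\bigr)&=P\bigl(z^{Y_n}\ge z^{(\mu-\epsilon)n}\bigr)\le z^{-(\mu-\epsilon)n}f_n(z)\\
&=A(z)\exp\!\bigl(n\,[\psi(z)-(\mu-\epsilon)\log z]\bigr)\bigl(1+O(\beta^n)\bigr),
\end{align*}
and since $\psi(z)-\mu\log z=O((\log z)^2)$ while $\epsilon\log z$ is of first order (and negative for $z<1$), the bracket is strictly negative for $z$ close enough to $1$, so the bound is summable in $n$ and Borel--Cantelli gives $Y_n>(\mu-\epsilon)n$ eventually, a.s. For the matching upper bound one wants the same estimate with real $z\in(1,1+\delta)$, which requires $E[z^{Y_n}]<\infty$ and $E[z^{Y_n}]=f_n(z)$. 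This follows by monotone convergence: fixing $\lambda_0\in(1-\delta,1)$ and $z_0\in(1,1+\delta)$ near $1$ with $g\neq0$ on $\{\lambda_0\}\times[1,z_0]$, letting $z\uparrow z_0$ gives $\sum_nE[z_0^{Y_n}]\lambda_0^n=C(\lambda_0,z_0)/g(\lambda_0,z_0)<\infty$, so each $E[z_0^{Y_n}]$ is finite and $z\mapsto E[z^{Y_n}]$ agrees with $f_n$ near $z=1$. Then $P(Y_n\ge(\mu+\epsilon)n)\le z^{-(\mu+\epsilon)n}f_n(z)$ is summable for $z>1$ close to $1$ by the same expansion, so $Y_n<(\mu+\epsilon)n$ eventually, a.s. Taking $\epsilon=1/k$ and intersecting over $k$ gives $Y_n/n\to\mu$ a.s.

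\emph{Step 3 (central limit theorem).} Fix $t\in\isset{R}$. For $n$ large, $e^{it/\sqrt n}\in V$, so by Step 1
\begin{align*}
E\bigl[e^{it(Y_n-\mu n)/\sqrt n}\bigr]&=e^{-i\mu t\sqrt n}\,f_n\!\bigl(e^{it/\sqrt n}\bigr)\\
&=A\!\bigl(e^{it/\sqrt n}\bigr)\exp\!\Bigl(n\,\psi\!\bigl(e^{it/\sqrt n}\bigr)-i\mu t\sqrt n\Bigr)\bigl(1+O(\beta^n)\bigr).
\end{align*}
A second-order Taylor expansion of $s\mapsto\psi(e^s)$ at $s=0$, using $\psi(1)=0$, $\psi'(1)=\mu$, gives $\psi(e^s)=\mu s+\tfrac12(\psi''(1)+\mu)s^2+O(s^3)$; substituting $s=it/\sqrt n$ and multiplying by $n$, $n\,\psi(e^{it/\sqrt n})-i\mu t\sqrt n=-\tfrac12(\psi''(1)+\mu)t^2+O(n^{-1/2})$. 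Since $A(e^{it/\sqrt n})\to1$ and $O(\beta^n)\to0$, the characteristic function converges pointwise to $\exp\!\bigl(-\tfrac12\nu^2 t^2\bigr)$ with $\nu^2=\psi''(1)+\mu=\mu+\mu^2-\rho''(1)$ (using $\psi''(1)=\mu^2-\rho''(1)$). Differentiating $g(\rho(z),z)=0$ twice at $z=1$ and solving for $\rho''(1)$ identifies this $\nu^2$ with the expression in the statement in terms of the second partials of $g$ at $(1,1)$; moreover $\nu^2\ge0$ automatically, being a limit of characteristic functions of modulus at most $1$. Lévy's continuity theorem gives $(Y_n-\mu n)/\sqrt n\to\mathcal{N}(0,\nu^2)$ in law.

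\emph{Main obstacle.} The delicate part is the interface between the purely function-theoretic asymptotics of $C/g$ and genuine probabilistic quantities. First, one must ensure that after cancelling spurious common zeros of $C$ and $g$ the point $\lambda=1$ is the only singularity of $\genf{G}(\cdot,z)$ in $\{|\lambda|\le r\}$ for $z$ near $1$, so that the contour argument in Step 1 produces the stated asymptotic with a uniform geometric error. Second, and more importantly, the upper tail bound in Step 2 needs $E[z^{Y_n}]$ to be finite and to coincide with the analytic continuation $f_n(z)$ for real $z$ slightly larger than $1$; this is exactly where the hypothesis that $C/g$ \emph{extends analytically} across $z=1$ (not merely that it is finite for $z<1$) is used, and it rules out heavy-tailed behaviour of $Y_n$. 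The remaining work, the two implicit differentiations of $g(\rho(z),z)=0$ needed to match $\nu^2$ with the displayed formula, is routine but calculation-heavy.
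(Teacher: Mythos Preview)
The paper does not prove this theorem: it is quoted verbatim as Theorem~2.2 of Sawyer and Steger~\cite{Sawyer1987} and used as a black box in the proof of Theorem~\ref{thm:main}. There is therefore no ``paper's own proof'' to compare your attempt against.

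That said, your sketch follows the standard route for results of this type (and, as far as I am aware, essentially the route taken in the original Sawyer--Steger paper): extract the coefficients $f_n(z)=E[z^{Y_n}]$ by a contour integral in $\lambda$, deform past the simple pole at $\lambda=\rho(z)$ furnished by the implicit function theorem, and then read off the LLN by Chernoff/Borel--Cantelli and the CLT by characteristic functions. The algebra identifying $\mu$ and $\nu^2$ with the displayed expressions via implicit differentiation of $g(\rho(z),z)=0$ is correct.

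One point worth tightening: you repeatedly speak of zeros of $g(\cdot,z)$ in a \emph{disk} $\{|\lambda|\le r\}$, but the hypothesis only gives analyticity of $C$ and $g$ in the \emph{annulus} $1-\delta<|\lambda|<1+\delta$. The fix is implicit in what you wrote but should be made explicit: for real $z\in(1-\delta,1)$ the series $\sum_n f_n(z)\lambda^n$ converges in the whole unit disk (since $0\le f_n(z)\le1$), so the Cauchy integral for $f_n(z)$ can be taken on a circle $|\lambda|=r_0$ with $r_0\in(1-\delta,1)$, where both representations of $\genf{G}$ are available and agree. The contour deformation then takes place entirely inside the annulus, and the analyticity of $\genf{G}$ in $\{|\lambda|<1\}$ is what guarantees (after cancelling any common analytic factors of $C$ and $g$) that $g(\cdot,z)$ has no zeros in the inner part $1-\delta<|\lambda|<1$ of the annulus. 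With this adjustment your Step~1 goes through. Your ``main obstacle'' paragraph correctly identifies where the hypothesis that $C/g$ extends across $z=1$ does real work, namely in showing $E[z^{Y_n}]<\infty$ for $z$ slightly larger than $1$ so that the upper tail bound in Step~2 is available.
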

We will apply Theorem \ref{thm:sawyer} with $Y_n = |W_n|_{\metric}$. In order to understand the expectation in Eq.~\eqref{eq:sawyer_double} we introduce a family of stopping times. For $x \in \arrowset$ let
\begin{equation} \label{eq:T_stopping_time_def}
    T(m, x) \defn \inf_{k \geq 0} \{W_{m + k} = W_{m} x \};
\end{equation}
note that these can be $\infty$. We define a set of $2N(N - 1)$ generating functions, one for each element of $\genset{N}$ as
\begin{equation} \label{eq:R_fund_all}
    R_{i, j}^{(k)}(\lambda) \defn E_{e_i}\left[\lambda^{T\left(0, A_{i , j}^{(k)} \right)} \right],
\end{equation}
defined for those $\lambda \in \isset{C}$ where the expectation is finite. As we will show, these generating functions are the functions described in Proposition~\ref{prop:R_properties}. To do this, we will show that the $R_{i, j}^{(k)}(\lambda)$ introduced in Eq.~\eqref{eq:R_fund_all} uniquely satisfy Properties~\ref{eq:R_property_radius}, \ref{eq:R_property_physical} and \ref{eq:R_property_equations} of Proposition~\ref{prop:R_properties}.
\begin{remark} \label{remark:R_properties_immediate}
Since each $R_{i, j}^{(k)}(\lambda)$ is a power series whose coefficients are non-negative numbers summing to at most 1, we immediately see that they satisfy weaker versions of Properties~\ref{eq:R_property_radius} and \ref{eq:R_property_physical}. In particular, if $\lambda \in [0, 1]$ then $R_{i, j}^{(k)}(\lambda) \in [0, 1]$ and each $R_{i, j}^{(k)}(\lambda)$ is complex analytic in $\lambda \in D_1$.
\end{remark}
By a first step analysis based on the Markov property, we show the following proposition in Section~\ref{sec:gen_funcs_eqn}:
\begin{proposition} \label{prop:R_property_equations}
The set of functions $R_{i, j}^{(k)}(\lambda)$ introduced in Eq.~\eqref{eq:R_fund_all} satisfy Eq.~\eqref{eq:fund_R_eqs} for $\lambda \in [0, 1]$.
\end{proposition}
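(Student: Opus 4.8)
The plan is to prove Proposition~\ref{prop:R_property_equations} by a first-step analysis of the hitting times $T(0,A_{i,j}^{(k)})$ defined in \eqref{eq:T_stopping_time_def}, converting each conditional expectation that arises back into an expectation started from a unit arrow via the shift-invariance in Lemma~\ref{lemma:intermediate_words}, and then simplifying the words that appear using the relations \eqref{eq:groupoid_letter_relations} and the uniqueness of reduced representations (Lemma~\ref{lemma:plane_switching}). Fix $i\neq j$ and $k\in\{-1,1\}$, abbreviate $T=T(0,A_{i,j}^{(k)})$, and work under $P_{e_i}$. Since $W_0=e_i\neq A_{i,j}^{(k)}$ we always have $T\geq1$ (with $T=\infty$ possible), and $R_{i,j}^{(k)}(\lambda)=\sum_{n\geq1}P_{e_i}(T=n)\,\lambda^n$. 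I would first prove \eqref{eq:fund_R_eqs} for $\lambda\in[0,1)$ and then extend it to $\lambda=1$ by continuity: by Remark~\ref{remark:R_properties_immediate} each $R_{i,j}^{(k)}$ is a power series with nonnegative coefficients of total mass at most $1$, hence continuous on $[0,1]$, while \eqref{eq:fund_R_eqs} is a polynomial identity in $\lambda$ and the $R$'s.

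For $\lambda\in[0,1)$, conditioning on the first increment $W_1=A_{i,m}^{(k')}$ (which has probability $p_{i,m}^{(k')}$, for $m\neq i$ and $k'\in\{-1,1\}$), using $\lambda^{T}=\lambda\cdot\lambda^{T-1}$, and invoking the time-homogeneous Markov property gives
\[
R_{i,j}^{(k)}(\lambda)=\lambda\sum_{m\neq i}\sum_{k'}p_{i,m}^{(k')}\,E_{A_{i,m}^{(k')}}\!\bigl[\lambda^{\widehat T}\bigr],\qquad \widehat T\defn\inf\{n\geq0:W_n=A_{i,j}^{(k)}\}.
\]
By statement~\ref{lemma:intermediate_words_markov} of Lemma~\ref{lemma:intermediate_words}, under $P_{A_{i,m}^{(k')}}$ the process $(A_{i,m}^{(k')})^{-1}W_n$ has the law of $W_n$ under $P_{e_m}$, so $\widehat T$ has the same law as $T\bigl(0,\,A_{m,i}^{(k')}A_{i,j}^{(k)}\bigr)$ under $P_{e_m}$. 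It then remains to reduce $A_{m,i}^{(k')}A_{i,j}^{(k)}$ in the three possible cases: if $k'=k$ and $m=j$ the word equals $e_j$ and the term is $\lambda\,p_{i,j}^{(k)}$; if $k'=k$ and $m\neq i,j$ then $A_{m,i}^{(k)}A_{i,j}^{(k)}=A_{m,j}^{(k)}$ by \eqref{eq:groupoid_letter_relations} and the term is $\lambda\,p_{i,m}^{(k)}R_{m,j}^{(k)}(\lambda)$; and if $k'=-k$ (so automatically $m\neq i$) then $A_{m,i}^{(-k)}A_{i,j}^{(k)}$ is already reduced, since its two letters lie in opposite planes so that neither move \ref{eq:eclass_backtrack} nor \ref{eq:eclass_same_plane} applies.

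In this last case the quadratic term is produced. Since $A_{m,i}^{(-k)}A_{i,j}^{(k)}$ is the length-$2$ reduced word of Lemma~\ref{lemma:plane_switching}, statement~\ref{lemma:intermediate_words_intermediate} of Lemma~\ref{lemma:intermediate_words} forces any trajectory from $e_m$ that reaches it to first visit the intermediate word $A_{m,i}^{(-k)}$; letting $n_1$ be the first such time and applying the strong Markov property at $n_1$ together with statement~\ref{lemma:intermediate_words_markov} of Lemma~\ref{lemma:intermediate_words} (the target of $A_{m,i}^{(-k)}$ being $i$), the time elapsed between $n_1$ and the hit of $A_{m,i}^{(-k)}A_{i,j}^{(k)}$ has, conditionally on $\mathcal{F}_{n_1}$, the law of $T(0,A_{i,j}^{(k)})$ under $P_{e_i}$ and is independent of $\mathcal{F}_{n_1}$. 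Hence $T(0,A_{m,i}^{(-k)}A_{i,j}^{(k)})$ splits as $T(0,A_{m,i}^{(-k)})$ plus an independent copy of $T(0,A_{i,j}^{(k)})$, and (noting that on the event $T(0,A_{m,i}^{(-k)})=\infty$ both sides below vanish for $\lambda<1$)
\[
E_{e_m}\!\bigl[\lambda^{T(0,A_{m,i}^{(-k)}A_{i,j}^{(k)})}\bigr]=R_{m,i}^{(-k)}(\lambda)\,R_{i,j}^{(k)}(\lambda),
\]
so the term from this case is $\lambda\,p_{i,m}^{(-k)}R_{m,i}^{(-k)}(\lambda)R_{i,j}^{(k)}(\lambda)$.

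Summing the $2(N-1)$ first-step contributions — the single term of the first type, the $N-2$ terms of the second type, and the $N-1$ terms of the third type — reproduces exactly the right-hand side of \eqref{eq:fund_R_eqs}, and the identity extends to $\lambda=1$ as explained above. The step I expect to require the most care is the quadratic case: one must verify that $A_{m,i}^{(-k)}A_{i,j}^{(k)}$ truly cannot be shortened (so that the forced visit to $A_{m,i}^{(-k)}$ is legitimate and Lemma~\ref{lemma:intermediate_words} applies) and then arrange the strong-Markov decomposition so that it remains valid when the intermediate hitting time is infinite. Everything else reduces to bookkeeping with the relations \eqref{eq:groupoid_letter_relations}.
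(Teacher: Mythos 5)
Your proof is correct and takes essentially the same route as the paper: a first-step analysis conditioning on $W_1$, translation by Lemma~\ref{lemma:intermediate_words}\ref{lemma:intermediate_words_markov} to an expectation started from a unit arrow, reduction via the relations~\eqref{eq:groupoid_letter_relations}, and in the opposite-plane case the forced intermediate visit from Lemma~\ref{lemma:intermediate_words}\ref{lemma:intermediate_words_intermediate} together with the strong Markov property to factor the generating function. The only cosmetic difference is that the paper works directly at $\lambda=1$ (interpreting $R_{i,j}^{(k)}(\lambda)$ as the power series $\sum_n P(T=n)\lambda^n$, so all identities are equalities of convergent nonnegative series on $[0,1]$), making your separate continuity step unnecessary, though harmless.
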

In Section \ref{sec:transience}, we use Perron--Frobenius theory to prove:
\begin{proposition} \label{prop:transience}
The Markov chain $\{W_n\}_{n \geq 0}$ is transient.
\end{proposition}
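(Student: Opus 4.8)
The plan is to reduce transience to a strict inequality for the return probabilities $R_{i,j}^{(k)}(1)$, and then to extract that inequality from Perron--Frobenius theory applied to the linearization of \eqref{eq:fund_R_eqs} at $\lambda=1$. First I would reduce to hitting probabilities. By statement \ref{lemma:intermediate_words_markov} of Lemma~\ref{lemma:intermediate_words} I may take $W_0=e_i$, and by statement \ref{lemma:intermediate_words_sources} of the same lemma the chain started at $e_i$ is irreducible on $\arrowset_i$, so it is transient iff $P_{e_i}(W_n=e_i\text{ for some }n\ge1)<1$. A first--step decomposition together with the translation invariance of statement \ref{lemma:intermediate_words_markov} and the relation $A_{i,j}^{(k)}A_{j,i}^{(k)}=e_i$ yields
\begin{equation}
P_{e_i}\bigl(W_n=e_i\text{ for some }n\ge1\bigr)=\sum_{j\ne i}\sum_{k\in\{-1,1\}}p_{i,j}^{(k)}\,R_{j,i}^{(k)}(1),
\end{equation}
with $R_{j,i}^{(k)}(1)=P_{e_j}\bigl(T(0,A_{j,i}^{(k)})<\infty\bigr)\in[0,1]$. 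Since the $p_{i,j}^{(k)}$ are positive and sum to $1$ over $(j,k)$, this return probability is $<1$ precisely when $R_{a,b}^{(s)}(1)<1$ for at least one generator; and a short argument using \eqref{eq:fund_R_eqs} and $N\ge3$ shows that if one such value equals $1$ then all of them do, so it suffices to rule out $R_{a,b}^{(s)}(1)=1$ for every generator at once.

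Next I would pass to the minimal--solution picture: write \eqref{eq:fund_R_eqs} at $\lambda=1$ as a fixed--point equation $\mathbf r=F(\mathbf r)$ for the vector $\mathbf r\defn(R_{a,b}^{(s)}(1))_{a\ne b,\,s}$. The map $F\colon[0,1]^{2N(N-1)}\to[0,1]^{2N(N-1)}$ has nonnegative coefficients, hence is coordinatewise nondecreasing; by Proposition~\ref{prop:R_property_equations} the vector $\mathbf r$ is a fixed point, and --- this being the usual feature of first--passage equations --- it is the \emph{minimal} nonnegative fixed point, obtained as the increasing limit of the truncated hitting probabilities $F^{\circ m}(\mathbf 0)\uparrow\mathbf r$. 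Since $\mathbf 1$ is also a fixed point of $F$, what remains is to show that the minimal fixed point is \emph{not} $\mathbf 1$.

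To do this I would use Perron--Frobenius at $\mathbf 1$. Set $J\defn DF(\mathbf 1)$, a nonnegative matrix which, using the connectivity in Lemma~\ref{lemma:intermediate_words} and the positivity of all $p_{i,j}^{(k)}$, is irreducible; Perron--Frobenius then furnishes its spectral radius $\theta$ and a strictly positive eigenvector $\mathbf v$. If $\theta>1$, then for all small $\epsilon>0$ the vector $\mathbf 1-\epsilon\mathbf v$ is strictly positive, lies below $\mathbf 1$, and satisfies $F(\mathbf 1-\epsilon\mathbf v)=\mathbf 1-\epsilon\theta\mathbf v+O(\epsilon^2)\le\mathbf 1-\epsilon\mathbf v$ since $(\theta-1)\mathbf v$ is bounded away from $0$; monotonicity of $F$ then forces $F^{\circ m}(\mathbf 0)\le\mathbf 1-\epsilon\mathbf v$ for every $m$, hence $\mathbf r\le\mathbf 1-\epsilon\mathbf v<\mathbf 1$, which is transience. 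So everything reduces to checking $\theta=\rho_{\mathrm{PF}}(J)>1$. This is the point at which $N\ge3$ is genuinely used: for $N=2$ the chain degenerates to a (null--recurrent) lazy walk on $\mathbb Z$ and $\theta=1$, while for $N\ge3$ the extra window makes the linearized recursion supercritical.

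Steps one and two, the monotone--iteration argument, and the irreducibility of $J$ are routine. The hard part, which I expect to be the technical heart of the proof, is the quantitative claim $\rho_{\mathrm{PF}}(J)>1$: it has to hold uniformly over all admissible transition probabilities, including the degenerate regime in which the chain is strongly biased toward one of the two half--planes. In that regime many rows of $J$ have row sum strictly below $1$, so crude bounds such as $\rho_{\mathrm{PF}}(J)\ge\min(\text{row sums})$ are useless, and one must instead exploit the precise way \eqref{eq:fund_R_eqs} couples the $(k)$-- and $(-k)$--planes (which is exactly the mechanism that keeps the walk escaping even when one plane is almost never used).
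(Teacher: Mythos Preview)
Your outline is sound, and in fact your route and the paper's converge on the same key lemma, which you correctly flag as the hard part and leave open: that the Jacobian $J=DF(\mathbf 1)$ has Perron--Frobenius eigenvalue strictly greater than~$1$.

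The packaging differs. The paper argues by contradiction: assuming recurrence forces $R_{i,j}^{(k)}(1)=1$, so the matrix $M(1)$ obtained by differentiating \eqref{eq:fund_R_eqs} in $\lambda$ coincides with your $J$. From the derivative identity $\mathbf d=\lambda^{-1}\mathbf r+M(\lambda)\mathbf d$ with $\mathbf d,\mathbf r>\mathbf 0$ one gets $\rho_{\mathrm{PF}}(M(\lambda))<1$ for $\lambda\in(0,1)$; showing $\rho_{\mathrm{PF}}(M(1))>1$ then contradicts continuity of the Perron root. Your minimal-fixed-point/branching-process argument is a cleaner direct route and avoids the derivative identity and the continuity step, but it does not save you from having to prove $\rho_{\mathrm{PF}}(J)>1$.

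The paper's device for that step --- the idea you are missing --- is an explicit eigenvector construction. One seeks a nonzero $\mathbf u$ in the kernel of $J-I$ of the form
\[
u_{i,j}^{(k)}=(1-\delta_{j,1})\,\tilde u_j^{(k)}-(1-\delta_{i,1})\,\tilde u_i^{(k)}.
\]
Substituting into $(J-I)\mathbf u$ and using $R_{i,j}^{(k)}(1)=1$ collapses the $2N(N-1)$ equations to an $N\times 2N$ homogeneous linear system in $\tilde{\mathbf u}$, which has a nontrivial solution by a dimension count. The resulting $\mathbf u$ satisfies $u_{i,j}^{(k)}=-u_{j,i}^{(k)}$, so it has entries of both signs; hence $1$ is an eigenvalue of $J$ but cannot be its Perron root, and therefore $\rho_{\mathrm{PF}}(J)>1$. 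The primitivity of $J$ (needed for the Perron--Frobenius dichotomy) is checked separately and is where $N\ge3$ is used. With this lemma in hand, either your argument or the paper's finishes the proof.
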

As a direct corollary, we have:
\begin{corollary} \label{cor:r1_less_than_1}
The set of functions $R_{i, j}^{(k)}(\lambda)$ introduced in Eq.~\eqref{eq:R_fund_all} satisfy $R_{i, j}^{(k)}(1) < 1$.
\end{corollary}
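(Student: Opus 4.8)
The plan is to argue by contradiction: assuming $R_{i_0,j_0}^{(k_0)}(1)=1$ for some triple $(i_0,j_0,k_0)$, I will show that the chain $\{W_n\}$ is recurrent, which contradicts Proposition~\ref{prop:transience}. First I record the probabilistic meaning of these functions at $\lambda=1$. By Remark~\ref{remark:R_properties_immediate}, $R_{i,j}^{(k)}(\lambda)=\sum_{n\geq 0}\lambda^{n}P_{e_i}\!\left(T(0,A_{i,j}^{(k)})=n\right)$ is a power series with non-negative coefficients summing to at most $1$, so it is finite at $\lambda=1$ and
\[
R_{i,j}^{(k)}(1)=P_{e_i}\!\left(T(0,A_{i,j}^{(k)})<\infty\right)=P_{e_i}\!\left(\exists\,n\geq 0:\ W_n=A_{i,j}^{(k)}\right).
\]
Combining this with the homogeneity in Statement~\ref{lemma:intermediate_words_markov} of Lemma~\ref{lemma:intermediate_words}, for any $j$, any $m\neq j$, and any sign $c$ one also gets
\[
P_{A_{j,m}^{(c)}}\!\left(\exists\,n\geq 0:\ W_n=e_j\right)=R_{m,j}^{(c)}(1),
\]
since the target of $A_{j,m}^{(c)}$ is $m$ and $\bigl(A_{j,m}^{(c)}\bigr)^{-1}e_j=A_{m,j}^{(c)}$.

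Now suppose $R_{i_0,j_0}^{(k_0)}(1)=1$. Substituting $\lambda=1$ into the equation of Proposition~\ref{prop:R_property_equations} for the triple $(i_0,j_0,k_0)$ and using this assumption gives
\[
1=p_{i_0,j_0}^{(k_0)}+\sum_{m\neq i_0,j_0}p_{i_0,m}^{(k_0)}R_{m,j_0}^{(k_0)}(1)+\sum_{m\neq i_0}p_{i_0,m}^{(-k_0)}R_{m,i_0}^{(-k_0)}(1).
\]
The coefficients on the right are strictly positive by Assumption~\ref{ass:probs_positive} and sum to $1$ by the row-sum condition $\sum_{m,k}p_{i_0,m}^{(k)}=1$, while each $R$-value lies in $[0,1]$ by Remark~\ref{remark:R_properties_immediate}; hence every $R$-value appearing must equal $1$. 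Thus $R_{i_0,j_0}^{(k_0)}(1)=1$ forces $R_{m,j_0}^{(k_0)}(1)=1$ for all $m\neq i_0,j_0$ and $R_{m,i_0}^{(-k_0)}(1)=1$ for all $m\neq i_0$. I would then iterate this implication: carefully tracking sources, targets, and sign flips, a short combinatorial argument (here is where $N\geq 3$ is used, and is the reason the $N=2$ case is excluded) shows that the set of generators $A_{a,b}^{(c)}$ with $R_{a,b}^{(c)}(1)=1$, being non-empty and closed under these moves, is all of $\genset{N}$.

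Finally, once $R_{a,b}^{(c)}(1)=1$ for every generator, fix any $j$ and condition the chain started at $e_j$ on its first step. Since $W_1=A_{j,m}^{(c)}\neq e_j$ with probability $p_{j,m}^{(c)}$, the Markov property together with the second identity recorded above gives
\[
P_{e_j}\!\left(\exists\,n\geq 1:\ W_n=e_j\right)=\sum_{m\neq j}\sum_{c}p_{j,m}^{(c)}\,P_{A_{j,m}^{(c)}}\!\left(\exists\,n\geq 0:\ W_n=e_j\right)=\sum_{m\neq j}\sum_{c}p_{j,m}^{(c)}\,R_{m,j}^{(c)}(1)=1,
\]
so $e_j$ is a recurrent state, contradicting Proposition~\ref{prop:transience}. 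Hence $R_{i,j}^{(k)}(1)<1$ for every $(i,j,k)$.

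The step I expect to be the main obstacle is the combinatorial propagation in the middle paragraph. Transience alone (namely $P_y(W_n=y\text{ for some }n\geq 1)<1$ for each state $y$) does \emph{not} imply $P_x(W_n=y\text{ for some }n)<1$ for $x\neq y$; one genuinely needs the groupoid relations, encoded in the implication $R_{i,j}^{(k)}(1)=1\Rightarrow R_{m,j}^{(k)}(1)=R_{m,i}^{(-k)}(1)=1$, and must verify that iterating these moves indeed connects every generator (which fails for $N=2$), so that the loop closes back to a recurrence statement. The remaining ingredients — the reading of $R_{i,j}^{(k)}(1)$ as a hitting probability, the first-step decomposition, and Statement~\ref{lemma:intermediate_words_markov} of Lemma~\ref{lemma:intermediate_words} — are routine.
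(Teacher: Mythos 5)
Your proof is correct, and your worry about the middle step is well-founded: the propagation via Eq.~\eqref{eq:fund_R_eqs} does close. Starting from $R_{i,j}^{(k)}(1)=1$, the equation at $\lambda=1$ forces $R_{m,i}^{(-k)}(1)=1$ for all $m\neq i$; applying the equation again to each such triple $(m,i,-k)$ forces $R_{\ell,m}^{(k)}(1)=1$ for all $\ell\neq m$ and all $m\neq i$; applying it once more to such a triple $(\ell,m,k)$ with $m\neq i,\ell$ (such an $m$ exists because $N\geq 3$) forces $R_{\ell',\ell}^{(-k)}(1)=1$ for all $\ell'\neq\ell$ and \emph{every} $\ell$, and one further pass fills in the remaining $(+k)$-entries. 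So all $R_{a,b}^{(c)}(1)=1$, and your first-step identity $P_{e_j}(\exists\,n\geq 1: W_n=e_j)=\sum_{m\neq j,c}p_{j,m}^{(c)}R_{m,j}^{(c)}(1)=1$ gives recurrence, contradicting Proposition~\ref{prop:transience}.

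Where you differ from the paper: the paper's proof is the single line ``Proposition~\ref{prop:transience} implies that $R_{i,j}^{(k)}(1)=P_{e_i}(T(0,A_{i,j}^{(k)})<\infty)<1$,'' which treats the implication ``transient $\Rightarrow$ all hitting probabilities from $e_i$ are $<1$'' as immediate. As you correctly observe, this is \emph{not} automatic for a general transient chain — transience only says return probabilities $P_y(T_y<\infty)<1$, not that $P_x(T_y<\infty)<1$ for $x\neq y$ — so the extra structure (Proposition~\ref{prop:R_property_equations} together with the row-sum constraint and $N\geq3$) really is needed, and your argument supplies it. Your route is therefore the same in spirit (contradiction with Proposition~\ref{prop:transience}), but it makes explicit the bootstrap that the paper leaves implicit; note also that the bootstrap you run is, up to contrapositive, the step ``recurrent $\Rightarrow$ all $R_{i,j}^{(k)}(1)=1$'' already appearing inside the proof of Proposition~\ref{prop:transience}, strengthened to ``one $R=1\Rightarrow$ all $R=1$.'' Your version is more robust; the only thing I would add is the worked-out iteration above so the ``short combinatorial argument'' is not left as a promise.
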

\begin{proof}
Proposition~\ref{prop:transience} implies that $R_{i, j}^{(k)}(1) = P_{e_i}(T(0, A_{i, j}^{(k)})<\infty) < 1$.
\end{proof}
In Section~\ref{sec:unique_root} we demonstrate using techniques from the analysis of branching processes that
\begin{proposition} \label{prop:Rs_unique}
For a given $\lambda \in [0, 1]$, the only solution to Eq.~\eqref{eq:fund_R_eqs} satisfying Property~\ref{eq:R_property_physical} is given by the $R_{i, j}^{(k)}(\lambda)$ introduced in Eq.~\eqref{eq:R_fund_all}.
\end{proposition}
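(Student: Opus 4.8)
I will prove Proposition~\ref{prop:Rs_unique} by recasting \eqref{eq:fund_R_eqs} as a fixed-point problem for a monotone map and then borrowing ideas from the fixed-point theory of multitype Galton--Watson processes. Write the system \eqref{eq:fund_R_eqs} as $R=\Phi_\lambda(R)$, where $\Phi_\lambda\colon[0,1]^{\genset{N}}\to\isset{R}^{\genset{N}}$ has, for $\alpha=A_{i,j}^{(k)}$, its $\alpha$-component equal to the right-hand side of \eqref{eq:fund_R_eqs}. Then $\Phi_\lambda=\lambda f$, where $f$ is the (honest, multivariate) probability generating function of the branching process with type set $\genset{N}$ in which a particle of type $A_{i,j}^{(k)}$ has no children with probability $p_{i,j}^{(k)}$, one child of type $A_{m,j}^{(k)}$ with probability $p_{i,m}^{(k)}$ (for $m\neq i,j$), and two children, of types $A_{m,i}^{(-k)}$ and $A_{i,j}^{(k)}$, with probability $p_{i,m}^{(-k)}$ (for $m\neq i$); one checks $f(\onevec)=\onevec$ from $\sum_{j,k}p_{i,j}^{(k)}=1$. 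This process encodes the first-step/excursion decomposition behind Proposition~\ref{prop:R_property_equations}. Each component of $\Phi_\lambda$ is a polynomial with non-negative coefficients of degree at most two, so $\Phi_\lambda$ is continuous and coordinatewise non-decreasing, and $\Phi_\lambda(\onevec)=\lambda\onevec\le\onevec$ shows $\Phi_\lambda$ maps $[0,1]^{\genset{N}}$ into itself. Hence $\Phi_\lambda^{\,n}(\zerovec)$ is non-decreasing and bounded above by $\onevec$, so it converges to the \emph{minimal} solution $R^{\min}\in[0,1]^{\genset{N}}$ of \eqref{eq:fund_R_eqs}; and every solution $s\in[0,1]^{\genset{N}}$ of \eqref{eq:fund_R_eqs} satisfies $s=\Phi_\lambda^{\,n}(s)\ge\Phi_\lambda^{\,n}(\zerovec)$, hence $s\ge R^{\min}$.

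By Proposition~\ref{prop:R_property_equations} the vector $\hat R\defn(R_{i,j}^{(k)}(\lambda))$ from \eqref{eq:R_fund_all} is a solution of \eqref{eq:fund_R_eqs}; by Remark~\ref{remark:R_properties_immediate} it lies in $[0,1]^{\genset{N}}$, and since $R_{i,j}^{(k)}(\lambda)\ge\lambda\,p_{i,j}^{(k)}>0$ and $R_{i,j}^{(k)}(\lambda)\le R_{i,j}^{(k)}(1)<1$ (monotonicity in $\lambda$ and Corollary~\ref{cor:r1_less_than_1}), it satisfies Property~\ref{eq:R_property_physical} for every $\lambda\in(0,1]$ (the case $\lambda=0$ being vacuous, as there the only solution is $\zerovec$). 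So it is enough to show that \eqref{eq:fund_R_eqs} has a \emph{unique} solution satisfying Property~\ref{eq:R_property_physical}: that solution is then necessarily $R^{\min}=\hat R$. I will use that the ``dependency digraph'' of \eqref{eq:fund_R_eqs} — with an arc from $A_{i,j}^{(k)}$ to every generator whose unknown occurs in the $A_{i,j}^{(k)}$-equation — is strongly connected; this is a restatement of statement~\ref{lemma:intermediate_words_sources} of Lemma~\ref{lemma:intermediate_words}, and it makes the Jacobian $\Phi_\lambda'(x)$ a non-negative irreducible matrix (with this fixed zero-pattern) whenever $x>\zerovec$.

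The crux is the uniqueness. Let $s$ be a solution satisfying Property~\ref{eq:R_property_physical}; then $s\ge R^{\min}$ and $R^{\min}\ge\Phi_\lambda(\zerovec)>\zerovec$, and I claim $s=R^{\min}$. Suppose not, and set $v\defn s-R^{\min}$, so $v\ge\zerovec$, $v\neq\zerovec$. The identity $v=\Phi_\lambda(s)-\Phi_\lambda(R^{\min})=A\,v$ with $A\defn\int_0^1\Phi_\lambda'(R^{\min}+tv)\,dt$ (non-negative, and irreducible since the integrand is) together with the Perron--Frobenius theorem forces $v>\zerovec$ \emph{strictly}. Put $\sigma(\tau)\defn R^{\min}+\tau v$ and $\tau_0\defn\min_{\alpha}(1-R^{\min}_\alpha)/v_\alpha$; since $s<\onevec$ componentwise, $\tau_0>1$, we have $\sigma(\tau)\in[R^{\min},\onevec]$ for $\tau\in[0,\tau_0]$, and $\sigma(\tau_0)_\beta=1$ for some $\beta$. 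Because $v>\zerovec$ strictly, for each $\alpha$ the quadratic polynomial $\tau\mapsto\Phi_\lambda(\sigma(\tau))_\alpha$ has leading ($\tau^2$) coefficient equal to $\lambda$ times the degree-two part of $f_\alpha$ evaluated at $v$, a sum of strictly positive terms; hence this polynomial is convex in $\tau$, and it equals the affine function $\tau\mapsto\sigma(\tau)_\alpha$ at $\tau=0$ and $\tau=1$, where $\sigma$ takes the two fixed values $R^{\min}$ and $s$. A convex function lies above the affine extension of any of its chords, so $\Phi_\lambda(\sigma(\tau))\ge\sigma(\tau)$ for all $\tau\in[1,\tau_0]$, i.e.\ each such $\sigma(\tau)$ is a \emph{sub}solution. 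If $\lambda<1$ this is already a contradiction: the $\beta$-coordinate of $\Phi_\lambda(\sigma(\tau_0))\ge\sigma(\tau_0)$ reads $\lambda f_\beta(\sigma(\tau_0))\ge1$, but $f_\beta(\sigma(\tau_0))\le f_\beta(\onevec)=1$ forces $\lambda\ge1$. Thus for $\lambda\in(0,1)$ the solution is unique and equals $R^{\min}=\hat R$.

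For $\lambda=1$ the map $\Phi_1=f$ is itself a branching generating function and $\onevec$ is a genuine solution, so the preceding argument cannot by itself exclude a second solution in $(0,1)^{\genset{N}}$; here I would invoke the classical fixed-point dichotomy for multitype branching processes (Harris; Athreya and Ney). The process above is non-singular (the childless event and the two-child event have positive probabilities $p_{i,j}^{(k)}$ and $p_{i,m}^{(-k)}$) and positively regular — its mean matrix is irreducible by the connectivity noted above and aperiodic because each diagonal entry $\sum_{m\neq i}p_{i,m}^{(-k)}$ is positive — so the only solutions of $s=f(s)$ in $[0,1]^{\genset{N}}$ are the extinction-probability vector and $\onevec$. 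Since $\hat R$ is a solution all of whose coordinates are $<1$ (Corollary~\ref{cor:r1_less_than_1}), it must be the extinction vector, and since $\onevec$ violates Property~\ref{eq:R_property_physical}, $\hat R$ is the unique solution satisfying that property. (Alternatively the $\lambda=1$ case can be extracted from $\lambda<1$ by continuity, using that $R^{\min}(\lambda)$ increases to $R^{\min}(1)$ as $\lambda\uparrow1$ together with the implicit function theorem.) I expect this $\lambda=1$ endpoint to be the main obstacle, since the clean ``forbidden boundary face'' argument of the previous paragraph degenerates precisely there; the supporting step of deducing the strong connectivity (and aperiodicity) of the dependency digraph from statement~\ref{lemma:intermediate_words_sources} of Lemma~\ref{lemma:intermediate_words} is the other point requiring care.
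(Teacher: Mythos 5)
Your proposal is correct in substance and uses the same core idea as the paper — monotone iteration to the minimal fixed point, then a convexity argument along the segment from that fixed point to a putative second solution — but there are two points worth flagging, one where you do better than the paper and one where you make life harder than necessary.

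On the positive side, your use of the integrated Jacobian and Perron--Frobenius to deduce that $v = s - R^{\min}$ is \emph{strictly} positive is a genuine improvement. The paper's own proof asserts that the second derivative $\varphi''(\theta)$ is not identically zero ``since $f_\ell$ is nonlinear,'' but $\varphi''$ depends on the direction $r - \isvec{q}^\star$, not on $f_\ell$ alone: $\varphi'' = 2\lambda\,(r-q^\star)_{i,j}^{(k)}\sum_{m\ne i}p_{i,m}^{(-k)}(r-q^\star)_{m,i}^{(-k)}$, which would vanish if $r$ and $q^\star$ happened to agree on all the coordinates $(m,i,-k)$. Your irreducibility argument rules this out cleanly. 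However, you should actually verify that the zero-pattern of the Jacobian is irreducible rather than deriving it from Lemma~\ref{lemma:intermediate_words}\ref{lemma:intermediate_words_sources}, which is a statement about the Markov chain, not about the dependency structure of Eq.~\eqref{eq:fund_R_eqs}. (The paper does this by direct computation for the closely related matrix $\ismat{M}(\lambda)$ in Lemma~\ref{lemma:M_primitive}.)

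On the other hand, your handling of $\lambda = 1$ is unnecessarily heavy, and the Athreya--Ney detour can be dropped. The paper's choice of where to read off the contradiction is more robust: with $\varphi(\theta) \defn f_\ell(\sigma(\theta)) - \sigma_\ell(\theta)$, one has $\varphi(0) = 0$, $\varphi(\tau_0) \le 0$ (because $\sigma(\tau_0) \le \onevec$ and $f_\ell(\onevec) = 1$), and strict convexity; therefore $\varphi(\theta) < 0$ for $\theta \in (0,\tau_0)$, and in particular $\varphi(1) < 0$, contradicting the fixed-point equation $\varphi(1) = 0$. This works for every $\lambda \in (0,1]$ in one stroke. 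Your own version can also be made to close: since $\tau_0 > 1$ strictly and your quadratic is \emph{strictly} convex (thanks to $v > \zerovec$), you get the strict inequality $\lambda f_\beta(\sigma(\tau_0)) > \sigma(\tau_0)_\beta = 1$, which already contradicts $\lambda f_\beta(\sigma(\tau_0)) \le \lambda \le 1$ at $\lambda = 1$; you simply stopped at the non-strict $\ge$, which is why you felt the need for the branching-process dichotomy. So the $\lambda = 1$ endpoint is not really an ``obstacle'' — both your approach and the paper's dispose of it with the same convexity lemma, provided the strictness you already established is carried through.
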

For $x, y \in \arrowset$, define the generating function
\begin{equation} \label{eq:visit_generating_function}
    \genf{S}(x, y; \lambda) \defn \sum_{n = 0}^{\infty} P_x(W_{n} = y)\, \lambda^n .
\end{equation}
Next, define the generating function associated with first visits from an arbitrary word to be
\begin{equation} \label{eq:stopping_generating_function}
    \genf{R}(x, y; \lambda) \defn \sum_{n = 0}^{\infty} P_x(T(0, y) = n)\, \lambda^n .
\end{equation}
Note that these functions are identically zero unless $x, y \in \arrowset_i$ for some $1 \leq i \leq N$ by Lemma~\ref{lemma:intermediate_words}. For any $w \in \arrowset_i$, we have $\genf{S}(e_i, w; \lambda) = \genf{R}(e_i, w; \lambda)\, \genf{S}(w, w; \lambda)$ using the strong Markov property with the first hitting time of $w$.  In Section \ref{sec:proof_of_roc}, by obtaining an exponential bound on $P_{e_i}(W_n = e_i)$, we show the following proposition:
\begin{proposition} \label{prop:radii_of_convergence}
The functions $\genf{S}(x, y;\lambda)$ and $\genf{R}(x, y; \lambda)$ introduced in Eqs.~\eqref{eq:visit_generating_function} and \eqref{eq:stopping_generating_function}, respectively, have radii of convergence strictly greater than 1.
\end{proposition}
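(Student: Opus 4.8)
The plan is to reduce the proposition to a single analyticity fact about the first-passage generating functions $R_{i,j}^{(k)}(\lambda)$ of Eq.~\eqref{eq:R_fund_all}, and then prove that fact with the implicit function theorem.

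\emph{Reduction.} Using the translation-invariance part of Lemma~\ref{lemma:intermediate_words}, both $\genf{S}(x,y;\lambda)$ and $\genf{R}(x,y;\lambda)$ reduce to the cases $x=e_\ell$; together with the first-passage identity $\genf{S}(e_\ell,w;\lambda)=\genf{R}(e_\ell,w;\lambda)\,\genf{S}(w,w;\lambda)$ recorded before the proposition and with $\genf{S}(w,w;\lambda)=\genf{S}(e_{\target(w)},e_{\target(w)};\lambda)$, it is enough to treat $\genf{R}(e_\ell,w;\lambda)$ for $w\in\arrowset_\ell$ and the diagonal functions $\genf{S}(e_\ell,e_\ell;\lambda)$. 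For the former, write $w=A_{i_1,i_2}^{(k_1)}\cdots A_{i_d,i_{d+1}}^{(k_d)}$ in the reduced form of Lemma~\ref{lemma:plane_switching} (with $i_1=\ell$); applying the strong Markov property at the successive first-hitting times of the prefixes of $w$ — which are exactly the intermediate words that every path to $w$ must traverse, by Lemma~\ref{lemma:intermediate_words} — and translating each leg back to the identity yields the finite product $\genf{R}(e_\ell,w;\lambda)=\prod_{m=1}^{d}R_{i_m,i_{m+1}}^{(k_m)}(\lambda)$. For the latter, a first-return decomposition at $e_\ell$ gives $\genf{S}(e_\ell,e_\ell;\lambda)=(1-U_\ell(\lambda))^{-1}$ for $\lambda\in[0,1)$, with $U_\ell(\lambda)=\lambda\sum_{j,k}p_{\ell,j}^{(k)}R_{j,\ell}^{(k)}(\lambda)$ and $U_\ell(1)=\sum_{j,k}p_{\ell,j}^{(k)}R_{j,\ell}^{(k)}(1)<1$ by Corollary~\ref{cor:r1_less_than_1}. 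Hence, once we know that each $R_{i,j}^{(k)}$ is analytic on a disk $D_{1+\epsilon}$, the function $1-U_\ell$ is analytic and nonvanishing near $\lambda=1$, so $\genf{S}(e_\ell,e_\ell;\lambda)$ and then all of $\genf{S}(x,y;\lambda),\genf{R}(x,y;\lambda)$ extend analytically past $\lambda=1$; since these are power series with nonnegative coefficients, Pringsheim's theorem pushes their radii of convergence above $1$. The exponential bound on $P_{e_i}(W_n=e_i)$ is precisely the analyticity of $\genf{S}(e_i,e_i;\lambda)$ on $D_{1+\epsilon}$.

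\emph{Analyticity of the $R_{i,j}^{(k)}$.} Write the bracket on the right of Eq.~\eqref{eq:fund_R_eqs} as $\Phi(\mathbf{R})$, so the system is $\mathbf{R}=\lambda\,\Phi(\mathbf{R})$ for the vector $\mathbf{R}=(R_{i,j}^{(k)})_{i\ne j}$; each component $\Phi_{i,j}^{(k)}$ is a polynomial with nonnegative coefficients, hence convex and increasing on $[0,1]^{2N(N-1)}$. By Proposition~\ref{prop:R_property_equations} the point $\mathbf{r}\defn(R_{i,j}^{(k)}(1))$ satisfies $\mathbf{r}=\Phi(\mathbf{r})$; by Corollary~\ref{cor:r1_less_than_1} we have $0<\mathbf{r}<\mathbf{1}$ coordinatewise; and $\mathbf{1}$ is also a fixed point, because $\sum_{j,k}p_{i,j}^{(k)}=1$. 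Comparing $\Phi$ along the segment $t\mapsto\mathbf{r}+t(\mathbf{1}-\mathbf{r})$, $t\in[0,1]$, with the affine interpolant through the two fixed points $\mathbf{r}$ and $\mathbf{1}$, convexity gives $\Phi(\mathbf{r}+t(\mathbf{1}-\mathbf{r}))\le\mathbf{r}+t(\mathbf{1}-\mathbf{r})$, whence $D\Phi(\mathbf{r})\,(\mathbf{1}-\mathbf{r})\le\mathbf{1}-\mathbf{r}$; the inequality is strict in at least one coordinate because each $\Phi_{i,j}^{(k)}$ contains a genuine quadratic term $p_{i,m}^{(-k)}R_{m,i}^{(-k)}R_{i,j}^{(k)}$ with $p_{i,m}^{(-k)}>0$, which makes that coordinate strictly convex in the positive direction $\mathbf{1}-\mathbf{r}$. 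Since $D\Phi(\mathbf{r})$ is a nonnegative matrix whose dependency graph is strongly connected (chase, in Eq.~\eqref{eq:fund_R_eqs}, the dependence of $\Phi_{i,j}^{(k)}$ on $R_{m,j}^{(k)}$ and $R_{m,i}^{(-k)}$, which using $p_{i,j}^{(k)}\in(0,1)$ and $N\ge3$ links all indices), Perron--Frobenius theory yields $\mathrm{spr}(D\Phi(\mathbf{r}))<1$, so $\idmat-D\Phi(\mathbf{r})$ is invertible.

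Therefore the holomorphic implicit function theorem applied to $\lambda\,\Phi(\mathbf{R})-\mathbf{R}$ at $(\lambda,\mathbf{R})=(1,\mathbf{r})$ produces, on a disk about $\lambda=1$, a unique analytic solution $\widetilde{\mathbf{R}}(\lambda)$ with $\widetilde{\mathbf{R}}(1)=\mathbf{r}$ and $\widetilde{\mathbf{R}}=\lambda\,\Phi(\widetilde{\mathbf{R}})$. For real $\lambda$ slightly below $1$, $\widetilde{\mathbf{R}}(\lambda)$ lies in $(0,1)^{2N(N-1)}$ by continuity, so Proposition~\ref{prop:Rs_unique} identifies it with the true generating functions $\mathbf{R}(\lambda)$; since $\mathbf{R}$ is analytic on $D_1$ (Remark~\ref{remark:R_properties_immediate}) and agrees with the analytic $\widetilde{\mathbf{R}}$ on an interval, the identity theorem glues them into a function analytic on $D_1$ together with a neighborhood of $1$, in particular analytic on the real segment $[0,1+\epsilon)$ for some $\epsilon>0$. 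Because each $R_{i,j}^{(k)}$ has nonnegative Taylor coefficients at the origin, Pringsheim's theorem then forces its radius of convergence to be at least $1+\epsilon$; this closes the reduction and, as a byproduct, proves Property~\ref{eq:R_property_radius} of Proposition~\ref{prop:R_properties}. The step I expect to be the main obstacle is the invertibility of $\idmat-D\Phi(\mathbf{r})$, i.e. $\mathrm{spr}(D\Phi(\mathbf{r}))<1$: the weak bound $\mathrm{spr}\le1$ is automatic, but excluding the eigenvalue $1$ needs both the strict-convexity input and a careful verification that $D\Phi(\mathbf{r})$ is irreducible. This is exactly the assertion that the multitype branching process naturally attached to Eq.~\eqref{eq:fund_R_eqs} is strictly subcritical at its minimal, extinction-type fixed point $\mathbf{r}$, and it dovetails with the branching-process machinery already used for Proposition~\ref{prop:Rs_unique}.
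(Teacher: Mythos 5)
Your proof is correct, and it takes a genuinely different route from the paper's. The paper proves the exponential decay of return probabilities directly: it uses Assumption~\ref{ass:probs_positive} and transience to show that the reduced word length escapes to $+\infty$ at a positive linear rate, and then applies an Azuma--Hoeffding--type large-deviations bound (Lemma~\ref{lemma:large_deviations}) to get $P_{e_i}(W_n=e_i)\le C\beta^n$, from which analyticity of $\genf{S}$ and $\genf{R}$ past $\lambda=1$ follows. You instead turn the problem into an analytic one: you continue the vector of first-passage generating functions $\mathbf{R}(\lambda)$ past $\lambda=1$ by applying the holomorphic implicit function theorem to the quadratic system $\mathbf{R}=\lambda\,\Phi(\mathbf{R})$ at $(1,\mathbf{r})$, using the nonnegativity-and-two-fixed-points structure to show $\mathrm{spr}(D\Phi(\mathbf{r}))<1$, and then you use the first-return identity and Pringsheim to push everything else past $1$. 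Your route avoids the stopping-time/martingale construction entirely and instead leans on the algebraic structure already exploited in the proof of Proposition~\ref{prop:Rs_unique}; it also delivers Property~\ref{eq:R_property_radius} of Proposition~\ref{prop:R_properties} as an immediate byproduct rather than a consequence. The price is the invertibility of $\idmat-D\Phi(\mathbf{r})$, which you handle correctly.

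Two remarks on the details. First, a polynomial with nonnegative coefficients is \emph{not} convex on $[0,1]^{2N(N-1)}$ in general (the cross terms $q_{m,i}^{(-k)}q_{i,j}^{(k)}$ have an indefinite Hessian); what is true, and what you actually use, is that each $\Phi_{i,j}^{(k)}$ is convex when restricted to any line with a nonnegative direction vector, because the second derivative along such a line is a nonnegative combination of products of the direction's coordinates. That is exactly the one-dimensional statement \eqref{eq:phi_convexity} from the proof of Proposition~\ref{prop:Rs_unique}, so you should phrase it that way. Second, since every component $\Phi_{i,j}^{(k)}$ carries a quadratic term with a strictly positive coefficient and the direction $\mathbf{1}-\mathbf{r}$ is strictly positive, the chord inequality is strict in \emph{every} coordinate, i.e. $D\Phi(\mathbf{r})(\mathbf{1}-\mathbf{r})<\mathbf{1}-\mathbf{r}$ entrywise. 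This makes the irreducibility of $D\Phi(\mathbf{r})$ unnecessary: from $D\Phi(\mathbf{r})\mathbf{v}\le(1-\varepsilon)\mathbf{v}$ for some $\varepsilon>0$ and a strictly positive vector $\mathbf{v}$, the bound $\mathrm{spr}(D\Phi(\mathbf{r}))\le1-\varepsilon<1$ is immediate. So the step you flagged as the main obstacle is actually the easy part of your argument; no graph-chasing on indices is required.
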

These ingredients allow us to prove Proposition~\ref{prop:R_properties}.
\begin{proof}[Proof of Proposition~\ref{prop:R_properties}]

We will show that the $R_{i, j}^{(k)}$ defined in Eq.~\eqref{eq:R_fund_all} are the unique functions satisfying Properties~\ref{eq:R_property_radius}--\ref{eq:R_property_equations}.

Since $R_{i, j}^{(k)}(\lambda) = \genf{R}(e_i, A_{i, j}^{(k)}; \lambda)$ for each $(i, j, k)$, by Proposition~\ref{prop:radii_of_convergence} there is an $\epsilon > 0$ such that $R_{i, j}^{(k)}$ is complex analytic in $D_{1 + \epsilon}$ as required for Property~\ref{eq:R_property_radius}. Property~\ref{eq:R_property_physical} is satisfied for $\lambda \in [0, 1)$ by definition and for $\lambda = 1$ by Corollary~\ref{cor:r1_less_than_1}. By Proposition~\ref{prop:R_property_equations} and Property~\ref{eq:R_property_radius}, the $R_{i, j}^{(k)}$ satisfy Eq.~\eqref{eq:fund_R_eqs} in $D_{1 + \epsilon}$ as required for Property~\ref{eq:R_property_equations}. Finally, by Proposition~\ref{prop:Rs_unique} and Property~\ref{eq:R_property_radius}, the $R_{i, j}^{(k)}$ are unique which completes the proof.
\end{proof}

We define
\begin{equation} \label{eq:double_gen_G}
    \genf{G}_{i}(\lambda, z) \defn E_{e_i}\left[\sum_{n = 0}^{\infty} z^{ |W_n|_{\metric} } \lambda^n \right].
\end{equation}
Let $\ismat{K}(\lambda, z)$ be the $2N \times 2N$ matrix whose blocks are
\begin{equation} \label{eq:A_mat_def}
\ismat{K}(\lambda, z) \defn \begin{pmatrix}
\zeromat & \ismat{B}(1; \lambda, z) \\
\ismat{B}(-1; \lambda, z) & \zeromat
\end{pmatrix},
\end{equation}
where $\ismat{B}(k; \lambda, z)$ is defined in Eq.~\eqref{eq:B_matrix_intro} and $\zeromat$ is the zero matrix.

For the proof of Theorem~\ref{thm:main}, we require the following two propositions whose proofs are postponed until Section~\ref{sec:proof_of_G}.
\begin{proposition} \label{prop:A_matrix_properties}
In the region $(\lambda, z) \in (0, 1] \times (0, 1]$, the matrix $\ismat{K}(\lambda, z)$ is irreducible and it has non-negative entries. In the region $(\lambda, z) \in (0, 1) \times (0, 1]$, the spectral radius of $\ismat{K}(\lambda, z)$ is strictly less than 1.
\end{proposition}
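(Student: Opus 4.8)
The plan is to handle the three claims in turn. \emph{Non-negativity} is immediate from the definitions: for $(\lambda,z)\in(0,1]\times(0,1]$ we have $z^{|A_{i,j}^{(k)}|_\metric}>0$ (the exponent is non-negative and $z>0$) and $R_{i,j}^{(k)}(\lambda)\in(0,1)$ — positive on $(0,1)$ by definition, and at $\lambda=1$ because $R_{i,j}^{(k)}(1)\ge P_{e_i}(W_1=A_{i,j}^{(k)})=p_{i,j}^{(k)}>0$ by Assumption~\ref{ass:probs_positive}. Hence each off-diagonal block $\ismat{B}(\pm 1;\lambda,z)$ of $\ismat{K}(\lambda,z)$ has zero diagonal and \emph{strictly positive} off-diagonal entries, while the two diagonal blocks vanish.

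For \emph{irreducibility} I would pass to the directed graph $G$ on the $2N$ states $\{1^+,\dots,N^+,1^-,\dots,N^-\}$ in which there is an edge $u\to v$ precisely when the $(u,v)$ entry of $\ismat{K}(\lambda,z)$ is positive; by the previous paragraph these edges are exactly $i^+\to j^-$ and $i^-\to j^+$ for all $i\ne j$. Using $N\ge 3$: for $i\ne j$ pick $m\notin\{i,j\}$ to get $i^+\to m^-\to j^+$ and $i^-\to m^+\to j^-$, and for any $i$ pick distinct $m,\ell\notin\{i\}$ to get $i^+\to m^-\to\ell^+\to i^-$. Together with the direct edges this yields a directed path between every ordered pair of states, so $G$ is strongly connected and $\ismat{K}(\lambda,z)$ is irreducible on all of $(0,1]\times(0,1]$. (The hypothesis $N\ge 3$ is essential here: for $N=2$ the graph $G$ has two components, which is why that case is excluded.)

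For the \emph{spectral radius bound} I would first reduce to $z=1$: if $0<z\le1$ then $z^{|A_{i,j}^{(k)}|_\metric}\le1$, so $\ismat{K}(\lambda,z)\le\ismat{K}(\lambda,1)$ entrywise, and monotonicity of the Perron--Frobenius eigenvalue of non-negative matrices gives $\rho(\ismat{K}(\lambda,z))\le\rho(\ismat{K}(\lambda,1))$; it therefore suffices to show $\rho(\ismat{K}(\lambda,1))<1$ for $\lambda\in(0,1)$. I would prove this by showing that the row sums of $\sum_{n\ge0}\ismat{K}(\lambda,1)^n$ are finite, which forces $\rho<1$ because $\rho(\ismat{K}(\lambda,1))^n\le\|\ismat{K}(\lambda,1)^n\|_\infty\to0$. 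By the block structure of $\ismat{K}$ and its vanishing diagonals, the sum over the column index of the $i^+$-th (resp.\ $i^-$-th) row of $\ismat{K}(\lambda,1)^n$ equals $\sum_w\prod_{\ell=1}^{n}R_{i_\ell,i_{\ell+1}}^{(k_\ell)}(\lambda)$, where $w=A_{i_1,i_2}^{(k_1)}A_{i_2,i_3}^{(k_2)}\cdots A_{i_n,i_{n+1}}^{(k_n)}$ with $i_1=i$ ranges over the reduced words of length $n$ with source $i$ whose first letter has upper index $+1$ (resp.\ $-1$) — the alternation of the $k_\ell$ and the condition that consecutive indices differ being precisely the normal form of Lemma~\ref{lemma:plane_switching}.

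The identity needed to finish is the first-passage factorization
\[
\genf{R}(e_i,w;\lambda)=\prod_{\ell=1}^{|w|}R_{i_\ell,i_{\ell+1}}^{(k_\ell)}(\lambda),
\]
which follows from Lemma~\ref{lemma:intermediate_words}: part~\ref{lemma:intermediate_words_intermediate} forces any path from $e_i$ to $w$ to visit the successive prefixes of $w$ in order, part~\ref{lemma:intermediate_words_markov} identifies the law of each prefix-to-next-prefix increment with that of $T(0,A_{i_\ell,i_{\ell+1}}^{(k_\ell)})$ under $P_{e_{i_\ell}}$, and the strong Markov property makes these increments independent. Granting this, and using $\genf{R}(e_i,w;\lambda)=\sum_m P_{e_i}(T(0,w)=m)\lambda^m\le\sum_m P_{e_i}(W_m=w)\lambda^m$ together with $P_{e_i}(W_m\in\arrowset_i)=1$, the sum over $n$ of the $i^\pm$-row sums of $\ismat{K}(\lambda,1)^n$ is bounded by
\[
\sum_{w\in\arrowset_i}\genf{R}(e_i,w;\lambda)\;\le\;\sum_{w\in\arrowset_i}\sum_{m\ge0}\lambda^m P_{e_i}(W_m=w)\;=\;\sum_{m\ge0}\lambda^m\;=\;\frac{1}{1-\lambda}<\infty .
\]
Summing over the $2N$ rows gives $\sum_{n\ge0}\|\ismat{K}(\lambda,1)^n\|_\infty<\infty$, hence $\rho(\ismat{K}(\lambda,1))<1$, completing the proof. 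The main obstacle is the first-passage factorization: one must invoke Lemma~\ref{lemma:intermediate_words} carefully to be sure a path cannot reach a reduced word without passing through each of its prefixes and that the resulting increments are independent copies of the basic first-passage times; once that is in hand, the sub-probability estimate $\sum_{w}\genf{R}(e_i,w;\lambda)\le(1-\lambda)^{-1}$ and the passage from summable row sums to $\rho<1$ are routine.
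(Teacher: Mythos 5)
Your proposal is correct and follows essentially the same route as the paper: non-negativity is immediate, irreducibility comes from the alternating block structure together with $N \ge 3$, and the spectral-radius bound reduces to the estimate $\sum_{w\in\arrowset_i}\genf{R}(e_i,w;\lambda) \le (1-\lambda)^{-1}$ via the first-passage factorization (which the paper packages as Lemmas~\ref{lemma:first_in_terms_of_funds} and \ref{lemma:B_in_terms_of_R}). The only cosmetic differences are that the paper phrases irreducibility via primitivity of the blocks $\ismat{B}(k;\lambda,z)$, bounds $\genf{R}\le\genf{S}$ using $\genf{S}(y,y;\lambda)\ge 1$ rather than $P_{e_i}(T(0,w)=m)\le P_{e_i}(W_m=w)$, and works with all $z\in(0,1]$ directly rather than first reducing to $z=1$ by entrywise monotonicity.
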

\begin{proposition} \label{prop:G_in_terms_of_A} Let $\isvec{s}(\lambda)$ and $\isvec{v}(i)$ be $N$-vectors whose entries satisfy $\vecinds{s}{j}(\lambda) = \genf{S}(e_j, e_j; \lambda)$,  $\vecinds{v}{j}(i) = \delta_{ij}$ and let $\bar{\isvec{s}}(\lambda)$ and $\bar{\isvec{v}}(\lambda)$ be the $2N$-vectors $\bar{\isvec{s}}(\lambda) = (\isvec{s}(\lambda), \isvec{s}(\lambda))$ and $\bar{\isvec{v}}(i) = (\isvec{v}(i), \isvec{v}(i))$. Then, we have
\begin{equation} \label{eq:G_in_terms_of_A}
\genf{G}_i(\lambda, z)  = \genf{S}(e_i, e_i; \lambda) + \sum_{d = 1}^{\infty}  \bar{\isvec{v}}\transpose(i) \, \ismat{K}(\lambda, z)^d\,  \bar{\isvec{s}}(\lambda),
\end{equation}
where $\genf{G}_i(\lambda, z)$ is defined in Eq.~\eqref{eq:double_gen_G} and $\ismat{K}(\lambda, z)$ is defined in Eq.~\eqref{eq:A_mat_def}.
\end{proposition}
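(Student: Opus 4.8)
The plan is to expand the double generating function $\genf{G}_i(\lambda,z)$ of \eqref{eq:double_gen_G} as a sum over the arrows that the chain can occupy, to organize that sum by the length $d=|w|$ of the reduced word, and to recognize the resulting $d$-fold sum as $\bar{\isvec{v}}\transpose(i)\,\ismat{K}(\lambda,z)^d\,\bar{\isvec{s}}(\lambda)$. I would work throughout with $(\lambda,z)\in(0,1)\times(0,1)$, where every series in sight has non-negative terms and is finite, so that Tonelli's theorem licenses all rearrangements (the identity on this range is what the later arguments require). Since $W_0=e_i$ forces $W_n\in\arrowset_i$ for every $n$, interchanging the sums in \eqref{eq:double_gen_G} and splitting off the empty word gives
\begin{equation*}
\genf{G}_i(\lambda,z)=\sum_{w\in\arrowset_i}z^{|w|_{\metric}}\,\genf{S}(e_i,w;\lambda)=\genf{S}(e_i,e_i;\lambda)+\sum_{d\ge 1}\,\sum_{\substack{w\in\arrowset_i\\ |w|=d}}z^{|w|_{\metric}}\,\genf{S}(e_i,w;\lambda).
\end{equation*}
It then suffices to prove, for each $i$ and each $d\ge 1$,
\begin{equation} \label{eq:Gproof_per_d}
\sum_{\substack{w\in\arrowset_i\\ |w|=d}}z^{|w|_{\metric}}\,\genf{S}(e_i,w;\lambda)=\bar{\isvec{v}}\transpose(i)\,\ismat{K}(\lambda,z)^d\,\bar{\isvec{s}}(\lambda).
\end{equation}

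The core of the argument is to factor each summand on the left of \eqref{eq:Gproof_per_d}. Fix a non-empty $w\in\arrowset_i$ with $|w|=d$, and write its unique reduced representation from Lemma~\ref{lemma:plane_switching} as $w=\prod_{m=1}^{d}A_{i_m,i_{m+1}}^{(k_m)}$ with $i_1=i$, all $i_\ell\neq i_{\ell+1}$, and $k_m=k_1(-1)^{m-1}$; set $w_0=e_i$ and $w_m=\prod_{\ell=1}^{m}A_{i_\ell,i_{\ell+1}}^{(k_\ell)}$, so $w_d=w$. The identity $\genf{S}(e_i,w;\lambda)=\genf{R}(e_i,w;\lambda)\,\genf{S}(w,w;\lambda)$ (noted earlier via the strong Markov property at the first hitting time of $w$), together with Lemma~\ref{lemma:intermediate_words}(i), reduces matters to the product formula
\begin{equation} \label{eq:Gproof_Rfactor}
\genf{R}(e_i,w;\lambda)=\prod_{m=1}^{d}R_{i_m,i_{m+1}}^{(k_m)}(\lambda),\qquad\text{together with}\qquad \genf{S}(w,w;\lambda)=\genf{S}(e_{i_{d+1}},e_{i_{d+1}};\lambda).
\end{equation}
For the first formula Lemma~\ref{lemma:intermediate_words}(iii) is essential: any trajectory from $e_i$ to $w$ must visit $w_1,\dots,w_{d-1}$, and applying that lemma to the initial segment ending at the first visit of each $w_m$ shows the first-hitting times $\tau_m$ of the $w_m$ satisfy $0=\tau_0<\tau_1<\dots<\tau_d$ on $\{\tau_d<\infty\}$. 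Writing $\lambda^{T(0,w)}=\prod_{m=1}^{d}\lambda^{\tau_m-\tau_{m-1}}$ there and applying the strong Markov property at $\tau_{d-1},\dots,\tau_1$ in turn — at each stage using Lemma~\ref{lemma:intermediate_words}(i) to match the conditional law of $\tau_m-\tau_{m-1}$ (the chain restarted at $w_{m-1}$, equivalently at $e_{i_m}$ after translation by $w_{m-1}^{-1}$) with that of $T(0,A_{i_m,i_{m+1}}^{(k_m)})$ under $P_{e_{i_m}}$, whose generating function is $R_{i_m,i_{m+1}}^{(k_m)}(\lambda)$ by \eqref{eq:R_fund_all} — peels off the factors one at a time and yields \eqref{eq:Gproof_Rfactor}. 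Combining with the additivity \eqref{eq:metric_condition}, i.e. $z^{|w|_{\metric}}=\prod_{m=1}^{d}z^{|A_{i_m,i_{m+1}}^{(k_m)}|_{\metric}}$, and the definition \eqref{eq:B_matrix_intro} of $\ismat{B}$ (the Kronecker delta is inactive since $i_m\neq i_{m+1}$), I obtain
\begin{equation*}
z^{|w|_{\metric}}\,\genf{S}(e_i,w;\lambda)=\left(\prod_{m=1}^{d}\matinds{B}{i_m}{i_{m+1}}(k_m;\lambda,z)\right)\genf{S}(e_{i_{d+1}},e_{i_{d+1}};\lambda).
\end{equation*}

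The last step is bookkeeping. By Lemma~\ref{lemma:plane_switching}, non-empty reduced words of length $d$ with source $i$ correspond bijectively to pairs $(k_1,(i_2,\dots,i_{d+1}))$ with $k_1\in\{-1,1\}$, $i_\ell\in\objectset$, $i_\ell\neq i_{\ell+1}$ (and $i_1=i$). Summing the last display over these pairs, and noting that every term with some $i_\ell=i_{\ell+1}$ vanishes because $\matinds{B}{i_\ell}{i_\ell}=0$, the left side of \eqref{eq:Gproof_per_d} becomes $\sum_{k_1\in\{-1,1\}}\sum_{i_2,\dots,i_{d+1}}\bigl(\prod_{m=1}^{d}\matinds{B}{i_m}{i_{m+1}}(k_m;\lambda,z)\bigr)\genf{S}(e_{i_{d+1}},e_{i_{d+1}};\lambda)$ with $k_m=k_1(-1)^{m-1}$. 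This matches the right side: by the block structure of $\ismat{K}$ in \eqref{eq:A_mat_def}, a length-$d$ walk in $\ismat{K}$ starting in the first (top) block at vertex $i$ alternates between the two blocks — its first edge uses $\ismat{B}(1)$, its next $\ismat{B}(-1)$, and so on — and therefore accumulates exactly $\prod_m\matinds{B}{i_m}{i_{m+1}}(k_m)$ with $k_1=+1$, while starting in the second (bottom) block yields $k_1=-1$; the doubled vector $\bar{\isvec{v}}(i)$ runs over both starting copies, and since $\bar{\isvec{s}}(\lambda)$ has the same entries in both blocks, pairing the walk's endpoint with $\bar{\isvec{s}}(\lambda)$ always contributes $\genf{S}(e_{i_{d+1}},e_{i_{d+1}};\lambda)$, the $i_{d+1}$-th entry of $\isvec{s}(\lambda)$. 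This gives \eqref{eq:Gproof_per_d}; summing over $d\ge 1$ and adding the $d=0$ term $\genf{S}(e_i,e_i;\lambda)$ yields \eqref{eq:G_in_terms_of_A}, with $\sum_d\ismat{K}(\lambda,z)^d$ convergent by Proposition~\ref{prop:A_matrix_properties} and the entries of $\bar{\isvec{s}}(\lambda)$ finite by Proposition~\ref{prop:radii_of_convergence}.

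The step I expect to be the main obstacle is \eqref{eq:Gproof_Rfactor}: it hinges on using Lemma~\ref{lemma:intermediate_words}(iii) correctly to force the trajectory through all the intermediate words and thereby order the hitting times, and then on a careful iterated strong Markov argument that keeps precise track of the events $\{\tau_m<\infty\}$, so that the telescoping $\lambda^{T(0,w)}=\prod_m\lambda^{\tau_m-\tau_{m-1}}$ and the factorization of the expectation remain valid even when some of the $\tau_m$ are infinite. The matrix-identification step is conceptually transparent but still demands care in setting up the bijection and in checking that the doubled vectors cause no double counting.
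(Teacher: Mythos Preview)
Your proposal is correct and follows essentially the same route as the paper. The paper packages the two main ingredients you derive inline into separate lemmas: your factorization \eqref{eq:Gproof_Rfactor} is exactly Lemma~\ref{lemma:first_in_terms_of_funds}, and your matrix-identification step is the content of Lemma~\ref{lemma:B_in_terms_of_R} (which records that the $(i,j)$ entry of the alternating product $\ismat{B}(k)\ismat{B}(-k)\cdots$ equals $\sum_{w\in\isset{F}_{i,j}^{(k)}(d)}z^{|w|_{\metric}}\genf{R}(e_i,w;\lambda)$); the final assembly into \eqref{eq:G_in_terms_of_A} is then the same short computation you give.
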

We are now ready to prove Theorem~\ref{thm:main}.
\begin{proof}[Proof of Theorem~\ref{thm:main}]
It is enough to show Theorem~\ref{thm:main} with initial condition $W_0 = e_i$ for each $1 \leq i \leq N$. We will apply Theorem~\ref{thm:sawyer} with $Y_n = |W_n|_{\metric}$. Then, $\genf{G}_i$ in Eq.~\eqref{eq:double_gen_G} is $\genf{G}$ in Eq.~\eqref{eq:sawyer_double}.

By Proposition~\ref{prop:A_matrix_properties}, there is an $\epsilon > 0$ such that the geometric series in Eq.~\eqref{eq:G_in_terms_of_A} converges for $\lambda, z \in (1 - \epsilon, 1)$. Therefore, in this region, we have
\begin{equation}
    \sum_{d = 0}^{\infty}  \bar{\isvec{v}}\transpose(i) \, \ismat{K}(\lambda, z)^d\,  \bar{\isvec{s}}(\lambda) = \bar{\isvec{v}}\transpose(i) \, (\idmat - \ismat{K}(\lambda, z))^{-1} \,  \bar{\isvec{s}}(\lambda) = \frac{\isvec{v}\transpose(i) \, \text{adj}[ \idmat - \ismat{K}(\lambda, z) ] \, \bar{\isvec{s}}(\lambda)}{\text{det}[ \idmat - \ismat{K}(\lambda, z) ]},
\end{equation}
where $\text{adj}[\cdot]$ is the adjugate matrix. Applying this to Eq.~\eqref{eq:G_in_terms_of_A} gives
\begin{equation}
    \genf{G}_{i}(\lambda, z) = \frac{C_i(\lambda, z)}{\det[\idmat - \ismat{K}(\lambda, z)]},
\end{equation}
where $C_i(\lambda, z)$ depends on $\text{adj}[\idmat - \ismat{K}]$ and on $\genf{S}(e_i, e_i;\lambda)$. Therefore, we have that $C_i(\lambda, z)$ and $\det[\idmat - \ismat{K}(\lambda, z)]$ are both polynomial functions of the $R_{i, j}^{(k)}(\lambda)$, $\genf{S}(e_i, e_i;\lambda)$. By Proposition~\ref{prop:radii_of_convergence}, these functions are analytic for $\lambda \in D_{1 + \epsilon}$ with a possibly smaller $\epsilon > 0$. In addition, $C_i(\lambda, z)$ and $\det[\idmat - \ismat{K}(\lambda, z)]$ both depend on $z$ through a finite number of positive powers and hence they are complex analytic in a neighborhood of $z = 1$. It follows that $C_i(\lambda, z)$ and $\det[\idmat - \ismat{K}(\lambda, z)]$ are complex analytic in the region $\lambda \in D_{1 + \epsilon}, |z - 1| < \epsilon$.

It remains to show that $C_i(1, 1) \neq 0$. By Proposition~\ref{prop:A_matrix_properties}, $\ismat{K}(\lambda, z)$ is non-negative and irreducible so, by Perron--Frobenius theory, there is a simple eigenvalue $\mu_{\text{PF}}(\lambda, z)$ of $\ismat{K}(\lambda, z)$ equal to the spectral radius of $\ismat{K}(\lambda, z)$. Also by Proposition~\ref{prop:A_matrix_properties}, we have $\mu_{\text{PF}}(\lambda, z) < 1$ when $(\lambda, z) \in [0, 1) \times (0, 1]$. We can write
\begin{equation} \label{eq:g_pole}
    \genf{G}_i(\lambda, 1) = \frac{1}{1 - \lambda} = \frac{C_i(\lambda, 1)}{\text{det}[\idmat - \ismat{K}(\lambda, 1) ]}.
\end{equation}
Therefore, we must have $\mu_{\text{PF}}(1,1) \geq 1$. By Proposition~\ref{prop:radii_of_convergence}, the entries of $\ismat{K}(\lambda, 1)$ are complex analytic in a neighborhood of $\lambda = 1$. Therefore, $\mu_{\text{PF}}(\lambda, 1)$ is also a complex analytic in a neighborhood of $\lambda = 1$ \cite{kato2013perturbation} and so we have $\mu_{\text{PF}}(1,1) = 1$. The characteristic polynomial of $\ismat{K}(\lambda, 1)$ satisfies
\begin{equation}
    \text{det}[x\, \idmat - \ismat{K}(\lambda, 1)] = (x - \mu_{\text{PF}}(\lambda, 1))\,k(x, \lambda),
\end{equation}
where $k(x, \lambda)$ is a polynomial in $x$ with $k( \mu_{\text{PF}}(\lambda, 1), \lambda) \neq 0$.  This implies that the function $\text{det}[\idmat - \ismat{K}(\lambda, 1) ]$ has a simple zero at $\lambda = 1$. Hence, by Eq.~\eqref{eq:g_pole}, we have that $C_i(1, 1) \neq 0$.  We conclude that Theorem \ref{thm:sawyer} applies to the random variables $\{|W_{n}|_{\metric}\}_{n \geq 0}$ with $g(\lambda, z) = \text{det}[\idmat - \ismat{K}(\lambda, z)]$. Referring to Eq.~\eqref{eq:A_mat_def}, this can also be written in the form
\begin{equation}
    g(\lambda, z) = \text{det}[\idmat - \ismat{B}(1; \lambda, z)\,\ismat{B}(-1; \lambda, z)].
\end{equation}
This completes the proof of Theorem~\ref{thm:main}.
\end{proof}

\section{Proofs of the main steps}
\label{sec:proofs}

The following subsections contain the proofs of the propositions for the proof of Theorem~\ref{thm:main}.

%%%%%%%%%%%%%%%%%%%%%%%%%%%%%%%%%%%%%%%%%%%%%%%%%%%%%%%%%%%%%%%%%%%%%%%%%%%%%%%%%%
%%%%%%%%%%%%%%%%%%%%%%%%%%%%%%%%%%%%%%%%%%%%%%%%%%%%%%%%%%%%%%%%%%%%%%%%%%%%%%%%%%
%%%%%%%%%%%%%%%%%%%%%%%%%%%%%%%%%%%%%%%%%%%%%%%%%%%%%%%%%%%%%%%%%%%%%%%%%%%%%%%%%%
\subsection{Proof of Proposition~\ref{prop:R_property_equations}}
\label{sec:gen_funcs_eqn}
%%%%%%%%%%%%%%%%%%%%%%%%%%%%%%%%%%%%%%%%%%%%%%%%%%%%%%%%%%%%%%%%%%%%%%%%%%%%%%%%%%
%%%%%%%%%%%%%%%%%%%%%%%%%%%%%%%%%%%%%%%%%%%%%%%%%%%%%%%%%%%%%%%%%%%%%%%%%%%%%%%%%%
%%%%%%%%%%%%%%%%%%%%%%%%%%%%%%%%%%%%%%%%%%%%%%%%%%%%%%%%%%%%%%%%%%%%%%%%%%%%%%%%%%

\begin{proof}[Proof of Proposition~\ref{prop:R_property_equations}]
By Lemma~\ref{lemma:intermediate_words}, we have
\begin{equation} \label{eq:translational_symmetry_funds}
P(T(m, xy) = \ell \st W_m = x) = P_{e_{\target(x)}}(T(0, y) = \ell ).
\end{equation}
We can assume that $(i, j, k) = (1, 2, 1)$. Conditioning on possible values of $W_1$ gives
\begin{subequations}
\begin{align}
    E_{e_1}[\lambda^{T( 0, A_{1 , 2}^{(1)} )}] &= p_{1, 2}^{(1)} E_{e_1}[\lambda^{T ( 0, A_{1 , 2}^{(1)} )} \st W_{1} = A_{1 , 2}^{(1)}] \label{eq:fund_proof_1} \\
    &+  \sum_{m \neq 1, 2 } p_{1, m}^{(1)}  E_{e_1}[\lambda^{T( 0, A_{1 , 2}^{(1)} )} \st W_{1} = A_{1 , m}^{(1)} ] \label{eq:fund_proof_2} \\
    &+ \sum_{m \neq 1} p_{1, m}^{(-1)}  E_{e_1}[\lambda^{T( 0, A_{1 , 2}^{(1)} )} \st W_{1} = A_{1 , m}^{(-1)} ]. \label{eq:fund_proof_3}
\end{align}
\end{subequations}
Let us consider these terms line-by-line. In line~\eqref{eq:fund_proof_1}, on the event $W_1 = A_{1 , 2}^{(1)}$ we have $T(0, A_{1 ,2}^{(1)} ) = 1$, thus $E[\lambda^{T ( 0, A_{1 , 2}^{(1)} )} \st W_{1} = A_{1 , 2}^{(1)}] = \lambda$.

Consider any term in the sum on line~\eqref{eq:fund_proof_2}. On the event $\{W_1 = A_{1 , m}^{(1)} \}$ we have $T(0, A_{1 , 2}^{(1)}) = 1 + T(1, A_{m , 2}^{(1)})$ by definition. Hence, by Eq.~\eqref{eq:translational_symmetry_funds},
\begin{equation}
     E_{e_1}[\lambda^{T( 0, A_{1 , 2}^{(1)} )} \st W_{1} = A_{1 , m}^{(1)} ] =  E_{e_m}[\lambda^{1 + T(0, A_{m , 2}^{(1)})}] = \lambda R_{m, 2}^{(1)}(\lambda).
\end{equation}
Consider any term in the sum on line \eqref{eq:fund_proof_3}. If the process is at $A_{1, m}^{(-1)}$ with $m \neq 1$ and visits $A_{1,  2}^{(1)}$ at a later time, then by Lemma~\ref{lemma:intermediate_words} it must visit $e_1$ before visiting $A_{1,  2}^{(1)}$. On the event $\{W_1 = A_{1,  m}^{(-1)} \}$ the first visit to $e_1$ happens at step $1 + T(1, A_{m,  1}^{(-1)})$. Conditioning on the value of $T(0, A_{m, 1}^{(-1)})$ and applying Eq.~\eqref{eq:translational_symmetry_funds} gives
\begin{equation}
E_{e_1}[\lambda^{T(0, A_{1 , 2}^{(1)})} \st W_1 = A_{1 , m}^{(-1)} ] = E_{e_m}[\lambda^{1 + T(0, A_{m , 1}^{(-1)} A_{1 , 2}^{(1)} )}] = \lambda R_{m, 1}^{(-1)}(\lambda) R_{1, 2}^{(1)}(\lambda).
\end{equation}
Collecting all these terms gives Eq.~\eqref{eq:fund_R_eqs}.
\end{proof}

%%%%%%%%%%%%%%%%%%%%%%%%%%%%%%%%%%%%%%%%%%%%%%%%%%%%%%%%%%%%%%%%%%%%%%%%%%%%%%%%%%
%%%%%%%%%%%%%%%%%%%%%%%%%%%%%%%%%%%%%%%%%%%%%%%%%%%%%%%%%%%%%%%%%%%%%%%%%%%%%%%%%%
%%%%%%%%%%%%%%%%%%%%%%%%%%%%%%%%%%%%%%%%%%%%%%%%%%%%%%%%%%%%%%%%%%%%%%%%%%%%%%%%%%
\subsection{Proof of Proposition \ref{prop:transience}}
\label{sec:transience}
%%%%%%%%%%%%%%%%%%%%%%%%%%%%%%%%%%%%%%%%%%%%%%%%%%%%%%%%%%%%%%%%%%%%%%%%%%%%%%%%%%
%%%%%%%%%%%%%%%%%%%%%%%%%%%%%%%%%%%%%%%%%%%%%%%%%%%%%%%%%%%%%%%%%%%%%%%%%%%%%%%%%%
%%%%%%%%%%%%%%%%%%%%%%%%%%%%%%%%%%%%%%%%%%%%%%%%%%%%%%%%%%%%%%%%%%%%%%%%%%%%%%%%%%

\begin{proof}[Proof of Proposition~ \ref{prop:transience}]
Suppose that $\{W_n\}_{n \geq 0}$ is recurrent for a given initial condition. Then, by Lemma~\ref{lemma:intermediate_words}, it is recurrent for any initial condition. Moreover, for a given initial condition $e_i$, the hitting times $T(0, A_{i,j}^{(k)})$ are almost surely finite, hence $R_{i, j}^{(k)}(1) = 1$ for each $(i, j ,k)$.

Let $D_{i, j}^{(k)}(\lambda) \defn \text{d}R_{i, j}^{(k)}/\text{d} \lambda $; we have $0 < D_{i, j}^{(k)}(\lambda) < \infty$ for $0 < \lambda < 1$ (see Remark~\ref{remark:R_properties_immediate}). Define the $2N(N - 1)$-vectors $\isvec{d} \defn (D_{i, j}^{(k)})$ and $\isvec{r} \defn (R_{i, j}^{(k)})$. Differentiating Eq.~\eqref{eq:fund_R_eqs} with respect to $\lambda$ gives the linear system
\begin{equation} \label{eq:M_linear_system}
    \isvec{d} = \lambda^{-1}\isvec{r} + \ismat{M}(\lambda) \isvec{d},
\end{equation}
where $\ismat{M}(\lambda)$ is a $2N(N - 1) \times 2N(N - 1)$ matrix whose entries are
\begin{equation} \label{eq:M_mat_def}
    M(\lambda)_{(i, j, k)}^{(i', j', k')} = \begin{cases}
    \lambda  p_{i, i'}^{(k)} & i' \neq j,\ j' = j,\ k' = k \\
    \lambda p_{i, i'}^{(-k)} R_{i, j}^{(k)}(\lambda) & j' = i,\ k' \neq k \\
    \lambda \sum_{\ell \neq i } p_{i, \ell}^{(-k)} R_{\ell, i}^{(-k)}(\lambda) & i = i',\ j = j',\ k = k'.
    \end{cases}
\end{equation}

By Remark~\ref{remark:R_properties_immediate}, $\ismat{M}(\lambda)$ extends continuously to $\lambda \in [0, 1]$.

By Lemma~\ref{lemma:M_primitive} below, the matrix $\ismat{M}(\lambda)$ is primitive for $\lambda \in (0, 1]$, and hence has Perron--Frobenius eigenvalue $\mu(\lambda) >  0$. There is a corresponding eigenvector $\isvec{v}(\lambda)$ with positive entries, and all other eigenvalues of $\ismat{M}(\lambda)$ are smaller than $\mu(\lambda)$ in norm (see for example \cite{seneta2006non}). Since $\isvec{r} > \zerovec$, by Eq.~\eqref{eq:M_linear_system} we have $\isvec{d} >\ismat{M} \isvec{d}$ and multiplying this vector inequality from the left with $\isvec{v}\transpose$ gives
\begin{equation}
\isvec{v}\transpose \isvec{d} > \isvec{v}\transpose \ismat{M} \isvec{d} = \mu \isvec{v}\transpose \isvec{d}.
\end{equation}
This shows that $\mu(\lambda)<1$ for $\lambda \in (0, 1)$ by the positivity of $\isvec{d}$. We will show that $\mu(\lambda_0) > 1$ for some $\lambda_0 \in (0, 1)$ which proves the lemma by contradiction.

We will now show that there is a non-zero vector $\isvec{u} =(u_{i, j}^{(k)})$ that satisfies $(\ismat{M}(1) - \idmat) \isvec{u} = \zerovec$ and
\begin{equation} \label{eq:M_eigenvector_ansatz}
 u_{i, j}^{(k)} = (1 - \delta_{j, 1})\tilde{u}_{j}^{(k)} - (1 - \delta_{i, 1}) \tilde{u}_{i}^{(k)},
\end{equation}
for some nonzero numbers $\{\tilde{u}_{i}^{(k)} : 1 \leq i \leq N, k \in \{-1, 1\} \}$. We introduce the $N\times 2N$ matrix
\begin{subequations}
\begin{align}
\ismat{J} &\defn (x_{i, (\ell,m)})_{1\le i, \ell \le N, m\in \{-1,1\}}, \\
x_{i,(\ell,m)}
&\defn
- p_{i, \ell}^{(m)} (1-\delta_{\ell, 1}) (1-\delta_{\ell,i})+\delta_{i,\ell} (1-\delta_{i,1})\sum_{j\neq i} p_{i,j}^{(m)}.
\end{align}
\end{subequations}
Suppose that $\isvec{u}$ is of the form in Eq.~\eqref{eq:M_eigenvector_ansatz}. Then for each $(i,j,k)$ we have
\begin{subequations}
\begin{align}
\left[(\ismat{M}(1) - \idmat) \isvec{u} \right]_{(i, j, k)}&=
\sum_{\ell \neq i, j   } p_{i, \ell}^{(k)}  u_{\ell, j}^{(k)} + \sum_{\ell \neq i} p_{i, \ell}^{(-k)} u_{\ell, i}^{(-k)}  - u_{i, j}^{(k)} \sum_{\ell \neq i} p_{i, \ell}^{(k)} \label{eq:m1_minus_I_u}\\
&=
\sum_{\ell \neq i} \sum_{m = \pm 1} p_{i, \ell}^{(m)}\left[ (1 - \delta_{i, 1}) \tilde{u}_{i}^{(m)}- (1 - \delta_{\ell, 1}) \tilde{u}_\ell^{(m)} \right]\\
&=\left[\ismat{J} \tilde{\isvec{u}}\right]_i\,,
\end{align}
\end{subequations}
where we have used $R_{i, j}^{(k)}(1) = 1$ in Eq.~\eqref{eq:m1_minus_I_u}. Hence $\ismat{J} \tilde{\isvec{u}} = \zerovec$ implies $(\ismat{M}(1) - \idmat) \isvec{u} = \zerovec$ if $\isvec{u}$ is given by Eq.~\eqref{eq:M_eigenvector_ansatz}. Since $\ismat{J}$ has dimensions $N\times 2N$, there is a non-trivial vector $\tilde{\isvec{u}}$ in its null-space. Then the vector $\isvec{u}$ defined via Eq.~\eqref{eq:M_eigenvector_ansatz} satisfies $(\ismat{M}(1) - \idmat) \isvec{u} = \zerovec$ which shows that 1 is an eigenvalue of  $\ismat{M}(1)$. Since  $u_{i, j}^{(k)} = - u_{j, i}^{(k)}$ for all $i,j,k$, the entries of the  corresponding eigenvector cannot be all positive, so 1 cannot be the Perron--Frobenius eigenvalue of $\ismat{M}(1)$. This implies $\mu(1)>1$.

Since $\mu(\lambda)$ is a simple root of the characteristic polynomial of $\ismat{M}(\lambda)$, it is continuous in the coefficients of that polynomial \cite{zedek1965}. These coefficients are continuous functions of $\lambda$ on $[0, 1]$. Hence, $\mu(\lambda)$ is a continuous function of $\lambda$, so there exists a $\lambda_0 \in (0, 1)$ such that $\mu(\lambda_0) > 1$, and we have the desired contradiction.
\end{proof}

%%%%%%%%%%%%%%%%%%%%%%%%%%%%%%%%%%%%%%%%%%%%%%%%%%%%%%%%%%%%%%%%%%%%%%%%%%%%%%%%%%
%%%%%%%%%%%%%%%%%%%%%%%%%%%%%%%%%%%%%%%%%%%%%%%%%%%%%%%%%%%%%%%%%%%%%%%%%%%%%%%%%%
%%%%%%%%%%%%%%%%%%%%%%%%%%%%%%%%%%%%%%%%%%%%%%%%%%%%%%%%%%%%%%%%%%%%%%%%%%%%%%%%%%
\subsection{Proof of Proposition~\ref{prop:Rs_unique}}
\label{sec:unique_root}
%%%%%%%%%%%%%%%%%%%%%%%%%%%%%%%%%%%%%%%%%%%%%%%%%%%%%%%%%%%%%%%%%%%%%%%%%%%%%%%%%%
%%%%%%%%%%%%%%%%%%%%%%%%%%%%%%%%%%%%%%%%%%%%%%%%%%%%%%%%%%%%%%%%%%%%%%%%%%%%%%%%%%
%%%%%%%%%%%%%%%%%%%%%%%%%%%%%%%%%%%%%%%%%%%%%%%%%%%%%%%%%%%%%%%%%%%%%%%%%%%%%%%%%%

The proof of Proposition~\ref{prop:Rs_unique} follows well-known techniques in branching processes, see Sevastyanov \cite{sevastyanov1971}.

\begin{proof}[Proof of Proposition~\ref{prop:Rs_unique}]

Fix a given $\lambda \in [0, 1]$. We begin considering Eq.~\eqref{eq:fund_R_eqs} which, in order to simplify the notation, we express as a $2N(N - 1)$-vector equation,
\begin{equation} \label{eq:fixed_point}
    \isvec{q} = \isvec{f}(\isvec{q}, \lambda).
\end{equation}
The vector entries are labeled by the multi-index $\ell = (i, j, k)$, with
\begin{equation} \label{eq:F_fixed_point_def}
    f_{\ell}(\isvec{q}, \lambda)=f_{i, j}^{(k)}(\isvec{q}, \lambda) = \lambda\left(p_{i, j}^{(k)} + \sum_{m \neq i, j } p_{i, m}^{(k)}  q_{m, j}^{(k)} + \sum_{m \neq i } p_{i, m}^{(-k)} q_{m, i}^{(-k)} q_{i, j}^{(k)}\right).
\end{equation}
Observe that each $\vecinds{f}{\ell}(\isvec{q}, \lambda)$ is a quadratic polynomial in $\isvec{q}$ whose coefficients are non-negative numbers summing to at most 1. Therefore, for $2N(N - 1)$-vectors $\isvec{x}, \isvec{y}$, we have
\begin{equation} \label{eq:f_non_decreasing}
    \zerovec \leq \isvec{x} < \isvec{y} \leq \onevec \implies \zerovec  < \isvec{f}(\isvec{x}, \lambda) < \isvec{f}(\isvec{y}, \lambda) \leq \onevec,
\end{equation}
where $\onevec$ is a vector with unit entries. Let $\{\isvec{a}_n(\lambda) \}_{n \geq 0}$ be a sequence defined recursively as
\begin{equation} \label{eq:a_seq_defn}
    \isvec{a}_{n + 1}(\lambda) = \isvec{f}(\isvec{a}_{n}(\lambda), \lambda), \quad \isvec{a}_0(\lambda) = \isvec{f}(\zerovec, \lambda).
\end{equation}
By Lemma~\ref{lemma:sequence_converges_to_r} and Corollary~\ref{cor:r1_less_than_1},
\begin{equation} \label{eq:a_iterates}
    (\isvec{a}_n(\lambda))_\ell \leq \sum_{m = 0}^{2^{n + 1} - 1}  P_{e_i}\left(T\left( 0, A_{i, j}^{(k)} \right) = m \right) \lambda^m \leq  R_{i, j}^{(k)}(\lambda) < 1, \quad n \geq 0.
\end{equation}
Let $\isvec{q}^\star(\lambda)$ be a vector whose entries are $\vecinds{q}{\ell}^\star(\lambda) = R_{i, j}^{(k)}(\lambda)$. By Eqs.~\eqref{eq:f_non_decreasing} and \eqref{eq:a_iterates} we have that $\{\isvec{a}_n(\lambda) \}_{n \geq 0}$ is a strictly increasing sequence bounded above by $\isvec{q}^\star(\lambda)$. Therefore, $\lim_{n \to \infty} \isvec{a}_n(\lambda) = \isvec{q}^\star(\lambda)$, where $\isvec{f}(\isvec{q}^\star, \lambda) = \isvec{q}^\star$ and $\zerovec < \isvec{q}^\star(\lambda) < \onevec$.

\

We will now show that $\isvec{q}^\star(\lambda)$ is the only solution to Eq.~\eqref{eq:fixed_point} satisfying $\zerovec < \isvec{q}(\lambda) < \onevec$. Suppose that there is an $\isvec{r}(\lambda) \neq \isvec{q}^\star(\lambda)$ such that $\isvec{r} = \isvec{f}(\isvec{r}, \lambda)$ and $\zerovec < \isvec{r}(\lambda) < \onevec$. Applying the function $\isvec{f}(\cdot, \lambda)$ repeatedly to both sides of the inequality $\zerovec <  \isvec{r}(\lambda)$ and using Eq.~\eqref{eq:f_non_decreasing} we get $\isvec{q}^\star(\lambda) \leq \isvec{r}(\lambda)$. We will drop the $\lambda$ dependence for the remainder of this proof. Draw the line $\isvec{z}(\theta) =\isvec{q}^\star + (\isvec{r} - \isvec{q}^\star)\theta$; there will be a point on this line $\tilde{\isvec{r}} \leq \onevec$ such that $\vecindstilde{r}{\ell} = 1$ for some $\ell$ and $\tilde{\theta} > 1$. We therefore have $\vecinds{f}{\ell}(\tilde{\isvec{r}}) \leq 1 = \vecindstilde{r}{\ell}$ by Eq.~\eqref{eq:f_non_decreasing}. Let $\varphi(\theta) = \vecinds{f}{\ell}(\isvec{z}(\theta)) - \vecinds{z}{\ell}(\theta)$, then we have
\begin{equation}
  \varphi(0) = \vecinds{f}{\ell}(\isvec{q}^\star) - \vecinds{q}{\ell}^\star = 0,
  \qquad
  \varphi(\tilde{\theta}) = \vecinds{f}{\ell}(\tilde{\isvec{r}}) - \vecindstilde{r}{\ell} \leq 0.
\end{equation}
By direct computation, and since $\isvec{f}_\ell$ is a quadratic polynomial with non-negative coefficients,
\begin{equation} \label{eq:phi_convexity}
\varphi''(\theta) = \sum_{m, n} (\vecinds{r}{m} - \vecinds{q}{m}^\star)(\vecinds{r}{n} - \vecinds{q}{n}^\star) \frac{\partial^2 \vecinds{f}{\ell}}{\partial\vecinds{z}{m} \partial \vecinds{z}{n}} \geq 0.
\end{equation}
Since $\vecinds{f}{\ell}$ is nonlinear, $\varphi''(\theta)$ is not identically zero so convexity gives $\varphi(\theta) < 0$ for $\theta \in (0, \tilde{\theta})$. In particular, $\varphi(1) < 0$, so $\vecinds{f}{\ell}(\isvec{r}) < \vecinds{r}{\ell}$: a contradiction.
\end{proof}

%%%%%%%%%%%%%%%%%%%%%%%%%%%%%%%%%%%%%%%%%%%%%%%%%%%%%%%%%%%%%%%%%%%%%%%%%%%%%%%%%%
%%%%%%%%%%%%%%%%%%%%%%%%%%%%%%%%%%%%%%%%%%%%%%%%%%%%%%%%%%%%%%%%%%%%%%%%%%%%%%%%%%
%%%%%%%%%%%%%%%%%%%%%%%%%%%%%%%%%%%%%%%%%%%%%%%%%%%%%%%%%%%%%%%%%%%%%%%%%%%%%%%%%%
\subsection{Proof of Proposition~\ref{prop:radii_of_convergence}}
\label{sec:proof_of_roc}
%%%%%%%%%%%%%%%%%%%%%%%%%%%%%%%%%%%%%%%%%%%%%%%%%%%%%%%%%%%%%%%%%%%%%%%%%%%%%%%%%%
%%%%%%%%%%%%%%%%%%%%%%%%%%%%%%%%%%%%%%%%%%%%%%%%%%%%%%%%%%%%%%%%%%%%%%%%%%%%%%%%%%
%%%%%%%%%%%%%%%%%%%%%%%%%%%%%%%%%%%%%%%%%%%%%%%%%%%%%%%%%%%%%%%%%%%%%%%%%%%%%%%%%%

In this section, we show that the functions $\genf{S}(x, y;\lambda)$ and $\genf{R}(x, y; \lambda)$ introduced in Eqs.~\eqref{eq:visit_generating_function} and \eqref{eq:stopping_generating_function}, respectively, have radii of convergence strictly greater than 1. This would follow from Proposition~8.1 in Lalley (2001) \cite{Lalley2001}, however the proof of this proposition is incomplete. A correction was provided to us by the author via personal communication and he kindly allowed us to reproduce the corrected proof here, adapted to our case.

To prove Proposition~\ref{prop:radii_of_convergence}, we require the following Lemma which is a consequence of the Azuma--Hoeffding inequality for bounded submartingales \cite{freedman1975}.
\begin{lemma} \label{lemma:large_deviations}
Let $\xi_1, \xi_2, \dots$ be a sequence of Bernoulli random variables adapted to a filtration $\{\mathcal{F}_n\}_{n \geq 0}$. Assume that there exists $p > 0$ such that, for every $n \geq 1$,
\begin{equation} \label{eq:bernoulli_condition}
P(\xi_{n + 1} = 1 \st \mathcal{F}_n) \geq p.
\end{equation}
Then for every $\alpha < p$, there exist $\beta < 1$ and $C < \infty$ such that $\forall n \geq 1$,
\begin{equation}
P\left(\sum_{i = 1}^{m} \xi_{i} \leq \alpha m \text{ for some } m\geq n\right) \leq C \beta^n .
\end{equation}
\end{lemma}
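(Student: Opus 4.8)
The plan is to deduce Lemma~\ref{lemma:large_deviations} from a one-sided (submartingale) version of the Azuma--Hoeffding inequality, applied to a suitably chosen auxiliary process, followed by a union bound over $m \ge n$. First I would reduce to a statement about a single tail probability $P\left(\sum_{i=1}^m \xi_i \le \alpha m\right)$ with an exponential-in-$m$ bound, and only at the end sum the geometric tail. Fix $\alpha < p$ and pick any $\alpha < p' < p$; the idea is that the partial sums $\sum_{i=1}^m \xi_i$ dominate (in a conditional-mean sense) a random walk with positive drift $p$, so falling below $\alpha m$ is a large-deviation event with exponentially small probability.

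Concretely, set $S_0 = 0$ and $S_m = \sum_{i=1}^m \xi_i$, and consider $Z_m \defn S_m - p\,m$. By the hypothesis \eqref{eq:bernoulli_condition}, $E[\xi_{n+1} \st \mathcal{F}_n] \ge p$, so $E[Z_{m+1} - Z_m \st \mathcal{F}_m] = E[\xi_{m+1}\st\mathcal F_m] - p \ge 0$; hence $\{Z_m\}$ is a submartingale with increments $\xi_{m+1} - p \in [-p, 1-p]$, i.e.\ bounded in an interval of length $1$. The event $\{S_m \le \alpha m\}$ is contained in $\{Z_m \le (\alpha - p) m\}$, i.e.\ $\{-Z_m \ge (p-\alpha)m\}$, and $-Z_m$ is a supermartingale with bounded increments. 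The one-sided Azuma--Hoeffding inequality for bounded-increment supermartingales (see \cite{freedman1975}) gives $P(-Z_m \ge t) \le \exp(-2t^2/m)$ for $t \ge 0$; taking $t = (p-\alpha)m$ yields
\begin{equation}
P\left(\sum_{i=1}^m \xi_i \le \alpha m\right) \le \exp\!\left(-2(p-\alpha)^2 m\right) \eqqcolon \rho^m,
\end{equation}
with $\rho = e^{-2(p-\alpha)^2} < 1$.

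Finally, a union bound over $m \ge n$ gives
\begin{equation}
P\left(\sum_{i=1}^m \xi_i \le \alpha m \text{ for some } m \ge n\right) \le \sum_{m \ge n} \rho^m = \frac{\rho^n}{1-\rho},
\end{equation}
so the claim holds with $\beta = \rho = e^{-2(p-\alpha)^2}$ and $C = 1/(1-\rho)$. The only genuinely delicate point is citing the correct one-sided concentration inequality: the $\xi_i$ need not be independent and we only control the conditional means from below, so one must use the submartingale/supermartingale form of Azuma--Hoeffding (as in Freedman \cite{freedman1975}) rather than the classical i.i.d.\ Hoeffding bound; once that tool is in hand the rest is a routine drift computation and geometric summation. (A minor bookkeeping point: one should state the single-$m$ bound for all $m \ge 1$, which is immediate since $\exp(-2(p-\alpha)^2 m) < 1$, and then the geometric sum converges because $\rho<1$.)
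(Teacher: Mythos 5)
Your proof is correct and follows exactly the route the paper has in mind: the paper states this lemma without proof, attributing it directly to the Azuma--Hoeffding inequality for bounded submartingales (Freedman), and your argument---turn $S_m - pm$ into a submartingale with increments of range $1$, apply the one-sided Hoeffding bound to $-Z_m$, then sum the geometric tail---is precisely that derivation. One tiny bookkeeping point: the hypothesis controls $E[\xi_{n+1}\mid\mathcal F_n]$ only for $n\ge 1$, so $\xi_1$ is uncontrolled and $Z_1-Z_0$ need not have nonnegative conditional mean; you should either start the supermartingale at $m=1$ (which shifts $t$ by at most $1$ and $m$ by $1$, still giving a geometric rate) or simply absorb the $m=1$ case into the constant $C$---neither affects the stated conclusion. (Also, the auxiliary $p'$ you introduce is never used and can be deleted.)
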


\begin{proof}[Proof of Proposition~\ref{prop:radii_of_convergence}]
Fix any $1 \leq i \leq N$. For any $x, y \in \arrowset_i$ we have $P_x(W_{n} = y) \geq P_x(T(0, y) = n)$ so it will be sufficient to show that the radius of convergence of $\genf{S}(x, y; \lambda)$ is strictly greater than 1; we begin by showing that that the radii of convergence of the $\genf{S}(x, y; \lambda)$ are all the same.

Let $x, x', y,y' \in \arrowset_i$. By Lemma~\ref{lemma:intermediate_words}, there exists a positive-probability path from $x$ to $y$ that passes through $x'$ then $y'$ on the way. Suppose that the shortest path from $x$ to $x'$ has $\ell_1$ steps, from $y$ to $y'$ has $\ell_2$ steps, and from $x'$ to $y'$ has $n'$ steps. Let $k = \ell_1 + \ell_2$, then by the Markov property we have that, for all $n \geq n'$,
\begin{equation}
    P_x(W_{n + k} = y) \geq P_x(W_{n + \ell_1 + \ell_2} = y, W_{n + \ell_1} = y', W_{\ell_1} = x' ) \geq \epsilon P_{x'}(W_n = y'),
\end{equation}
where $\epsilon > 0$ is independent of $n$. By the same argument, there exists a $k' \geq 0$ and $\epsilon' >  0$ such that, for sufficiently large $n$,
\begin{equation}
P_{x'}(W_{n + k'} = y') \geq \epsilon' P_{x}(W_n = y).
\end{equation}
Therefore, $\genf{S}(x, y; \lambda)$ and $\genf{S}(x', y'; \lambda)$ have the same radii of convergence.

We will now show that $\genf{S}(e_i, e_i; \lambda)$ has radius of convergence strictly greater than 1.  To do this, we will show that there are constants $C < \infty$ and $\beta < 1$ such that $P_{e_i}(W_n = e_i) \leq C \beta^n$. We will write $e \defn e_i$ for the remainder of this proof.

By Assumptions~\ref{ass:append_A_each_step}, \ref{ass:prob_form} and Lemma~\ref{lemma:intermediate_words}, the quantity
\begin{equation*}
    c(w) \defn P_{w}(|W_{n}| > |w| \text{ for all } n > 0)
\end{equation*}
takes one of a finite number of possible values (since, if $w$ is non-empty, $c(w)$ only depends on the last generator in the reduced representation of $w$.) By Proposition~\ref{prop:transience}, our process is transient and hence $c(A_{i, j}^{(k)}) > 0$ for at least one $(i, j, k)$. For any word $w$, we can append at most four generators at the end of $w$ to produce a word ending in $A_{i, j}^{(k)}$ in a way that the length of the word strictly increases during this process. Hence by the Markov property, there exists a $q > 0$ such that
\begin{equation} \label{eq:escape_prob}
P_w(|W_{n}| > |w|  \text{ for all } n > 0) \geq q \quad \forall w \in \arrowset.
\end{equation}
We fix $m \geq 1$ to be the smallest integer such that $m q > 1- q$.
We define the stopping times $\tau_k$ inductively such that $\tau_0 = 0$ and
\begin{equation}
\tau_{k + 1} \defn \min_{n > \tau_k}\{|W_{n}| - |W_{\tau_k}| \in\{-1, 0, m \} \}.
\end{equation}
By the transience of our process, $\tau_k < \infty$ almost surely and $W_{\tau_k}$ is well-defined.

Consider the event $\{W_n = e\}$ and take $j = \max\{k: \tau_k < n \}$.
By Assumptions~\ref{ass:append_A_each_step} and \ref{ass:prob_form}, we have $|W_{m + 1}| - |W_m| \in \{-1, 0, 1\}$ for each $m \geq 0$.  It follows that $|W_{\tau_j}| \leq 1$ since otherwise there would be a $\tau_j < n' < n$ such that $|W_{n'}| -|W_{\tau_j}|\in \{-1,0\}$, so $\tau_{j + 1} \leq n' < n$ which is a contradiction. Therefore, on the event $\{W_n = e\}$ we have $|W_{\tau_j}|\in\{0,1\}$ and $\tau_{j+1}=n$.

Let $\gamma \in (0, 1)$, then partitioning $\{W_n = e\}$ using the event $\{ \tau_{k} = n \text{ for some } k \geq \gamma n \}$ and its compliment we get the upper bound
\begin{equation} \label{eq:return_to_e_bound}
    P_e(W_n = e) \leq P(\tau_{\lceil \gamma n\rceil} > n) + P_e(W_{\tau_k} = e \text{ for some } k \geq \gamma n),
\end{equation}
where $\lceil \cdot \rceil$ is the ceiling function. We will show that there is a $\gamma$ such that each term in Eq.~\eqref{eq:return_to_e_bound} has an exponential bound.

Define the Bernoulli random variables $\{\xi_k\}_{k \geq 1}$ by
\begin{equation}
\xi_{k} \defn \begin{cases}
1 & m k < \tau_j \leq m(k + 1) \text{ for some } j \geq 0\\
0 & \text{otherwise}.
\end{cases}
\end{equation}
These are adapted to the filtration $\mathcal{F}_{m(k + 1)}$, $k \geq 1$ where $\mathcal{F}_n$ is the $\sigma$-algebra generated by the first $n$ steps of the process. By Assumption~\ref{ass:probs_positive}, we have
\begin{equation} \label{eq:increase_by_1_lower_bound}
    P_w(|W_1|=|w|+1) > 0,  \quad \forall w \in \arrowset.
\end{equation}
By Eq.~\eqref{eq:increase_by_1_lower_bound} and the Markov property, there exists an $\alpha > 0$ such that
\begin{equation} \label{eq:lemma_condition_xi}
P_e(\xi_{k + 1} = 1  \st \mathcal{F}_{mk}) \geq \alpha, \quad \forall k \geq 1.
\end{equation}
Suppose that $\tau_\ell > mb $ for some $\ell, b \geq 1$. If $\sum_{k = 1}^{b} \xi_{k} > \ell$, then there would be at least $\ell$ distinct intervals of length $m$ up to time $mb$ containing a $\tau_k$, which would imply that $\tau_\ell \leq mb$. Therefore, we must have
\begin{equation} \label{eq:tau_sum_condition}
P(\tau_\ell > mb) \leq P\left(\sum_{k = 1}^{b} \xi_{k} \le \ell \right).
\end{equation}
Consider the first term in Eq.~\eqref{eq:return_to_e_bound} and take $\gamma = {\alpha}/{(2m)}$. By Eqs.~\eqref{eq:lemma_condition_xi}, \eqref{eq:tau_sum_condition} and Lemma~\ref{lemma:large_deviations}, we have that there exist constants $C < \infty$ and $0 < \beta < 1$ such that
\begin{equation} \label{eq:k_lesser_exponential}
    P(\tau_{\lceil \gamma n \rceil} > n) \leq P\left(\sum_{k = 1}^{\ell} \xi_{k} \leq (\alpha/2)\ell \text{ for some } \ell \geq n/m \right) \leq C \beta^n, \quad \forall n \geq 0.
\end{equation}
Next, define the Bernoulli random variables $\{\zeta_k\}_{k \geq 1}$ by
\begin{equation}
\zeta_{k + 1} \defn \begin{cases}
1 & |W_{\tau_{k + 1}}|  - |W_{\tau_k}|  = m \\
0 & |W_{\tau_{k + 1}}|  - |W_{\tau_k}|  \in \{-1, 0\}
\end{cases}.
\end{equation}
These are adapted to the filtration $(\mathcal{F}_{\tau_{k + 1}})_{k \geq 0}$. By Eq.~\eqref{eq:escape_prob} and the Markov property, we have
\begin{equation} \label{eq:zeta_lower_bound}
    P(\zeta_{k + 1} = 1 \st \mathcal{F}_{\tau_k}) \geq q, \quad  \forall k \geq 0.
\end{equation}
We fix $r < q$ such that
\begin{equation} \label{eq:Delta_def}
m r - (1 - r) \defn \Delta > 0.
\end{equation}
By Eq.~\eqref{eq:zeta_lower_bound} and Lemma \ref{lemma:large_deviations}, there exist constants $K < \infty$ and $0 < \delta < 1$ such that
\begin{equation}
P_e\left( \sum_{\ell = 1}^{k} \zeta_\ell \leq r k \text{ for some } k \geq n \right) \leq K \delta^n, \quad \forall n \geq 0.
\end{equation}
If $\sum_{\ell = 1}^{k} \zeta_\ell \geq r k$, then $|W_{\tau_k}| \geq  m rk - (1 - r)k = k \Delta$ by Eq.~\eqref{eq:Delta_def}. Therefore,
\begin{equation} \label{eq:length_of_W_exponential}
P_e\left(|W_{\tau_k}| < k \Delta \text{ for some } k \geq n\right) \leq  K \delta^n \quad \forall n \geq 0.
\end{equation}
Considering the second term in Eq.~\eqref{eq:return_to_e_bound}, by Eq.~\eqref{eq:length_of_W_exponential} we have that for all $n \geq 0$,
\begin{equation} \label{eq:k_greater_exponential}
    P_e(W_{\tau_{k}} = e \,\text{ for some } k \geq \gamma n) \leq P_e\left(|W_{\tau_k}| < k \Delta \text{ for some } k \geq \gamma n \right) \leq K (\delta^\gamma)^n ,
\end{equation}
where $0 < \delta^\gamma < 1$. Combining Eqs.~\eqref{eq:k_lesser_exponential} and \eqref{eq:k_greater_exponential} with Eq.~\eqref{eq:return_to_e_bound} gives that $P_e(W_n = e)$ has an exponential bound, which in turn implies that the radius of convergence of $\genf{S}(e, e;\lambda)$ is strictly greater than 1. Therefore, we also have that the radius of convergence of $\genf{S}(x,y;\lambda)$ is strictly greater than 1 for all $x, y \in \arrowset_i$. Since $i$ was arbitrary, we conclude that the radius of convergence of $\genf{S}(x,y;\lambda)$ is strictly greater than 1 for all $x, y \in \arrowset$, as required.
\end{proof}

%%%%%%%%%%%%%%%%%%%%%%%%%%%%%%%%%%%%%%%%%%%%%%%%%%%%%%%%%%%%%%%%%%%%%%%%%%%%%%%%%%
%%%%%%%%%%%%%%%%%%%%%%%%%%%%%%%%%%%%%%%%%%%%%%%%%%%%%%%%%%%%%%%%%%%%%%%%%%%%%%%%%%
%%%%%%%%%%%%%%%%%%%%%%%%%%%%%%%%%%%%%%%%%%%%%%%%%%%%%%%%%%%%%%%%%%%%%%%%%%%%%%%%%%
\subsection{Proof of Proposition~\ref{prop:A_matrix_properties} and Proposition~\ref{prop:G_in_terms_of_A}}
\label{sec:proof_of_G}
%%%%%%%%%%%%%%%%%%%%%%%%%%%%%%%%%%%%%%%%%%%%%%%%%%%%%%%%%%%%%%%%%%%%%%%%%%%%%%%%%%
%%%%%%%%%%%%%%%%%%%%%%%%%%%%%%%%%%%%%%%%%%%%%%%%%%%%%%%%%%%%%%%%%%%%%%%%%%%%%%%%%%
%%%%%%%%%%%%%%%%%%%%%%%%%%%%%%%%%%%%%%%%%%%%%%%%%%%%%%%%%%%%%%%%%%%%%%%%%%%%%%%%%%
We begin by establishing two lemmas we will need for the proofs of Propositions~\ref{prop:A_matrix_properties} and \ref{prop:G_in_terms_of_A}.
\begin{lemma} \label{lemma:first_in_terms_of_funds}
Suppose that $w_1, w_2\in \arrowset_i$ for some $1 \leq i \leq N$ and $w_1 \neq w_2$. Consider the reduced composition
\begin{equation}
    w_1^{-1} w_2= \prod_{\ell = 1}^{d} A_{i_\ell , j_\ell}^{(k_\ell)} \quad d \geq 1
\end{equation}
given by Lemma~\ref{lemma:plane_switching}. Then, we have
\begin{equation} \label{eq:first_in_terms_of_funds}
    \genf{R}(w_1, w_2; \lambda) = \prod_{i = 1}^{d} R_{i_\ell, j_\ell}^{(k_\ell)}(\lambda).
\end{equation}
\end{lemma}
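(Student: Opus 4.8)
The plan is to decompose the first-passage time from $w_1$ to $w_2$ into $d$ successive legs, one for traversing each generator of the reduced word $w_1^{-1}w_2=\prod_{\ell=1}^{d}A_{i_\ell,j_\ell}^{(k_\ell)}$, and to identify the generating function of each leg with the corresponding $R_{i_\ell,j_\ell}^{(k_\ell)}$ via the strong Markov property. I would work throughout with $\lambda\in[0,1)$, using the convention $\lambda^{\infty}=0$; since both sides of \eqref{eq:first_in_terms_of_funds} are complex analytic on a common disk $D_{1+\epsilon}$ by Proposition~\ref{prop:radii_of_convergence} (recall $R_{i,j}^{(k)}(\lambda)=\genf{R}(e_i,A_{i,j}^{(k)};\lambda)$), agreement on $[0,1)$ propagates to all of $D_{1+\epsilon}$.

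First I would set up the intermediate words: put $u_0\defn w_1$ and $u_m\defn w_1\prod_{\ell=1}^{m}A_{i_\ell,j_\ell}^{(k_\ell)}$ for $1\le m\le d$, so that $u_d=w_2$. Each composition is well defined because $\target(u_{m-1})=i_m=\source(A_{i_m,j_m}^{(k_m)})$, which follows from the alternating structure \eqref{eq:generic_reduced_word} (so $j_{m-1}=i_m$). The prefix $\prod_{\ell=1}^{m}A_{i_\ell,j_\ell}^{(k_\ell)}$ is itself reduced, since the moves \ref{eq:eclass_backtrack} and \ref{eq:eclass_same_plane} act only on consecutive pairs of generators, so an applicable move on a prefix would be an applicable move on the whole word; hence it is the unique reduced representation of $w_1^{-1}u_m$. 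Let $\sigma_m$ denote the first hitting time of $u_m$ by $\{W_n\}$ started from $w_1$, so $\sigma_0=0$ and $\sigma_d=T(0,w_1^{-1}w_2)$ in the notation of \eqref{eq:T_stopping_time_def}. Applying Statement~\ref{lemma:intermediate_words_intermediate} of Lemma~\ref{lemma:intermediate_words} to the pair $w_1,u_m$ shows that on $\{\sigma_m<\infty\}$ the chain visits $u_{m-1}$ strictly before $u_m$, whence $\sigma_{m-1}<\sigma_m$ there; consequently, on $\{\sigma_{m-1}<\infty\}$ one has $\sigma_m=\sigma_{m-1}+\tilde\sigma_m$, where $\tilde\sigma_m$ is the first hitting time of $u_m$ by the chain shifted by $\sigma_{m-1}$ (with both sides $+\infty$ when $\sigma_m=\infty$).

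The core step is then a telescoping application of the strong Markov property. Set $g_m(\lambda)\defn E_{w_1}\!\left[\lambda^{\sigma_m}\right]$. Conditioning on $\mathcal F_{\sigma_{m-1}}$ on the event $\{\sigma_{m-1}<\infty\}$, where $W_{\sigma_{m-1}}=u_{m-1}$, the strong Markov property gives $E\!\left[\lambda^{\tilde\sigma_m}\mid\mathcal F_{\sigma_{m-1}}\right]=E_{u_{m-1}}\!\left[\lambda^{T(0,\,u_{m-1}^{-1}u_m)}\right]=E_{u_{m-1}}\!\left[\lambda^{T(0,\,A_{i_m,j_m}^{(k_m)})}\right]$, and by the translation invariance in Statement~\ref{lemma:intermediate_words_markov} of Lemma~\ref{lemma:intermediate_words} (using $\target(u_{m-1})=i_m$) this equals $E_{e_{i_m}}\!\left[\lambda^{T(0,\,A_{i_m,j_m}^{(k_m)})}\right]=R_{i_m,j_m}^{(k_m)}(\lambda)$ by \eqref{eq:R_fund_all}. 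Since $\lambda^{\sigma_{m-1}}=0$ on $\{\sigma_{m-1}=\infty\}$, this yields $g_m(\lambda)=R_{i_m,j_m}^{(k_m)}(\lambda)\,g_{m-1}(\lambda)$; iterating from $g_0\equiv 1$ gives $g_d(\lambda)=\prod_{\ell=1}^{d}R_{i_\ell,j_\ell}^{(k_\ell)}(\lambda)$. Finally $g_d(\lambda)=E_{w_1}\!\left[\lambda^{T(0,w_1^{-1}w_2)}\right]=\genf{R}(w_1,w_2;\lambda)$ for $\lambda\in[0,1)$ by \eqref{eq:stopping_generating_function} and dominated convergence, which is the assertion. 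The only real subtlety is the bookkeeping with possibly infinite hitting times — establishing the strict ordering $\sigma_{m-1}<\sigma_m$ so that $\tilde\sigma_m$ is genuinely the first passage of the shifted chain rather than some later one, and checking that $\{\sigma_{m-1}=\infty\}$ contributes nothing — but these are exactly what Statement~\ref{lemma:intermediate_words_intermediate} of Lemma~\ref{lemma:intermediate_words} and the convention $\lambda^{\infty}=0$ take care of, so I do not anticipate a genuine obstacle.
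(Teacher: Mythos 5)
Your proposal is correct and follows essentially the same route as the paper: the paper's one-line proof invokes exactly Lemma~\ref{lemma:intermediate_words} (to guarantee the chain passes through the intermediate words $u_m=w_1\prod_{\ell=1}^m A_{i_\ell,j_\ell}^{(k_\ell)}$ in order) together with the strong Markov property at those hitting times, and your telescoping $g_m=R_{i_m,j_m}^{(k_m)}g_{m-1}$ is the detailed working-out of that sketch. The extra care you take with the reducedness of prefixes, the possibly infinite stopping times, and the extension from $[0,1)$ to $D_{1+\epsilon}$ is all sound and fills in exactly the gaps the paper leaves implicit.
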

\begin{proof}
This follows from Lemma~\ref{lemma:intermediate_words} and the strong Markov property applied at the stopping times when the process reaches the intermediate words $\prod_{i = 1}^{m} A_{i_m, j_m}^{(k_m)}$ for each $1 \leq m \leq d$.
\end{proof}
We introduce the set of words
\begin{equation} \label{eq:F_set_def}
    \isset{F}_{i,j}^{(k)}(d) \defn \{w \in \arrowset_i: |w|  = d,\ \target(w) = j,\ \mathfrak{K}(w) = k  \},
\end{equation}
where $\mathfrak{K}(w) = k$ if $|w| \geq 1$ and the first letter of $w$ in its reduced representation is of the form $A_{i, j}^{(k)}$. A representative member of $\isset{F}_{i,j}^{(k)}(d)$ is shown in Figure~\ref{fig:F_example}.
\begin{figure}[ht]
\centering
\begin{tikzpicture}

\coordinate (LLtr) at (-47mm,3mm);
\coordinate (LLtl) at (-53mm,3mm);

\coordinate (LLbr) at (-47mm,-3mm);
\coordinate (LLbl) at (-53mm,-3mm);

\coordinate (Ltr) at (-22mm,3mm);
\coordinate (Ltl) at (-28mm,3mm);

\coordinate (Lbr) at (-22mm,-3mm);
\coordinate (Lbl) at (-28mm,-3mm);

\coordinate (Mtl) at(-3mm,3mm);
\coordinate (Mtr) at(3mm,3mm);

\coordinate (Mbl) at(-3mm,-3mm);
\coordinate (Mbr) at(3mm,-3mm);

\coordinate (Rtr) at (28mm,3mm);
\coordinate (Rtl) at (22mm,3mm);

\coordinate (Rbr) at (28mm,-3mm);
\coordinate (Rbl) at (22mm,-3mm);

\coordinate (RRtl) at (47mm,3mm);
\coordinate (RRtr) at (53mm,3mm);

\coordinate (RRbl) at (47mm,-3mm);
\coordinate (RRbr) at (53mm,-3mm);

\node[] at (-50mm,0mm) {$1$};
\draw[black, thick] (-50mm,0mm) circle (4.1mm);
% \draw (LLbl) rectangle (LLtr);

\node[] at (-25mm,0mm) {$2$};
\draw[black, thick] (-25mm,0mm) circle (4.1mm);
% \draw (Lbl) rectangle (Ltr);

\node[] at (0mm,0mm) {$3$};
\draw[black, thick] (0mm,0mm) circle (4.1mm);
% \draw (Mbl) rectangle (Mtr);

\node[] at (25mm,0mm) {$4$};
\draw[black, thick] (25mm,0mm) circle (4.1mm);
% \draw (Rbl) rectangle (Rtr);

\node[] at (50mm,0mm) {$5$};
\draw[black, thick] (50mm,0mm) circle (4.1mm);
% \draw (RRbl) rectangle (RRtr);

\draw[-{Latex[length=2mm, width=2mm]}, black, very thick] (LLtr) to[out=45,in=135] (Ltl);

\draw[-{Latex[length=2mm, width=2mm]}, black, very thick] (Lbr) to[out=-45,in=-135] (RRbl);

\draw[-{Latex[length=2mm, width=2mm]}, black, very thick] (RRtl) to[out=135,in=45] (Mtr);

\draw[-{Latex[length=2mm, width=2mm]}, black, very thick] (Mbr) to[out=-45,in=-135] (Rbl);

\end{tikzpicture}
\caption{One member of the set $\isset{F}_{1, 4}^{(1)}(4)$ in the $N = 5$ case.}
\label{fig:F_example}
\end{figure}
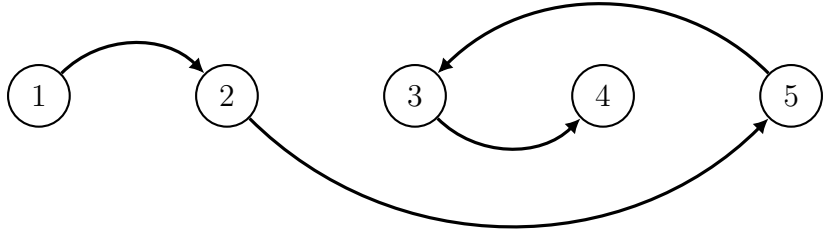
Let $\overline{\ismat{B}}(d, k; \lambda, z)$ be the matrix defined by the product
\begin{equation} \label{eq:B_prod_mat_def}
    \overline{\ismat{B}}(d, k; \lambda, z) \defn \prod_{i = 1}^{d} \ismat{B} \left( (-1)^{i + 1} k; \lambda, z \right),
\end{equation}
where $\ismat{B}(k; \lambda, z)$ is defined in Eq.~\eqref{eq:B_matrix_intro}. Then, we have the following lemma.
\begin{lemma} \label{lemma:B_in_terms_of_R}
The entries of $\overline{\ismat{B}}(d, k; \lambda, z)$ satisfy
\begin{equation} \label{eq:Bbar_in_terms_of_R}
\matindsbar{B}{i}{j}(d, k; \lambda, z) = \sum_{w \in   \isset{F}_{i,j}^{(k)}(d)} z^{|w|_{\metric}} \genf{R}(e_i, w; \lambda), \quad d \geq 1.
\end{equation}
\end{lemma}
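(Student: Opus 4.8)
The plan is to expand the matrix product $\overline{\ismat{B}}(d,k;\lambda,z)=\prod_{\ell=1}^{d}\ismat{B}((-1)^{\ell+1}k;\lambda,z)$ entrywise and match the resulting sum term-by-term with the right-hand side of \eqref{eq:Bbar_in_terms_of_R}. Writing the $(i,j)$ entry of this product out, with $i_1:=i$ and $i_{d+1}:=j$ held fixed,
\[
\matindsbar{B}{i}{j}(d,k;\lambda,z)=\sum_{i_2,\dots,i_d}\ \prod_{\ell=1}^{d}\matinds{B}{i_\ell}{i_{\ell+1}}\bigl((-1)^{\ell+1}k;\lambda,z\bigr),
\]
where the sum runs over all $i_2,\dots,i_d\in\{1,\dots,N\}$. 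By the definition \eqref{eq:B_matrix_intro}, each factor carries a Kronecker factor $(1-\delta_{i_\ell,i_{\ell+1}})$, so the only index sequences that survive are those with $i_\ell\neq i_{\ell+1}$ for every $1\le\ell\le d$.

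\textbf{Key steps.} First I would set up the bijection between the surviving index sequences and $\isset{F}_{i,j}^{(k)}(d)$. By Lemma~\ref{lemma:plane_switching}, an arrow $w$ of reduced length $d$ with $\source(w)=i$, $\target(w)=j$, and first letter of upper index $k$ (that is, $w\in\isset{F}_{i,j}^{(k)}(d)$) is exactly one of the form $A_{i_1,i_2}^{(k)}A_{i_2,i_3}^{(-k)}\cdots A_{i_d,i_{d+1}}^{((-1)^{d+1}k)}$ with $i_1=i$, $i_{d+1}=j$, $i_\ell\neq i_{\ell+1}$, and this representation is unique; hence the surviving sequences correspond bijectively to $w\in\isset{F}_{i,j}^{(k)}(d)$. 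Next I would evaluate each surviving product: for the word $w$ attached to $(i_1,\dots,i_{d+1})$, metric additivity \eqref{eq:metric_condition} gives $z^{|w|_\metric}=\prod_{\ell=1}^{d}z^{|A_{i_\ell,i_{\ell+1}}^{((-1)^{\ell+1}k)}|_\metric}$, while Lemma~\ref{lemma:first_in_terms_of_funds} (applied with $w_1=e_i$, $w_2=w$, using $e_i w=w$ since $w\in\arrowset_i$) gives $\genf{R}(e_i,w;\lambda)=\prod_{\ell=1}^{d}R_{i_\ell,i_{\ell+1}}^{((-1)^{\ell+1}k)}(\lambda)$. Multiplying these, and using $(1-\delta_{i_\ell,i_{\ell+1}})=1$, shows $\prod_{\ell=1}^{d}\matinds{B}{i_\ell}{i_{\ell+1}}((-1)^{\ell+1}k;\lambda,z)=z^{|w|_\metric}\genf{R}(e_i,w;\lambda)$; summing over the surviving sequences, i.e.\ over $w\in\isset{F}_{i,j}^{(k)}(d)$, yields \eqref{eq:Bbar_in_terms_of_R}.

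\textbf{Alternative.} One can instead induct on $d$ via the identity $\overline{\ismat{B}}(d+1,k;\lambda,z)=\ismat{B}(k;\lambda,z)\,\overline{\ismat{B}}(d,-k;\lambda,z)$ (immediate from \eqref{eq:B_prod_mat_def}): the base case $d=1$ follows from $\genf{R}(e_i,A_{i,j}^{(k)};\lambda)=R_{i,j}^{(k)}(\lambda)$ and $\isset{F}_{i,i}^{(k)}(1)=\emptyset$, and the inductive step uses the bijection $\isset{F}_{i,j}^{(k)}(d+1)\leftrightarrow\bigsqcup_{m\neq i}\{A_{i,m}^{(k)}\}\times\isset{F}_{m,j}^{(-k)}(d)$ obtained by splitting off the first letter, together with the same factorizations of $|\cdot|_\metric$ and $\genf{R}$.

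\textbf{Main obstacle.} There is no analytic difficulty here; the content is entirely combinatorial bookkeeping. The one point to handle with care is that the Kronecker factors in \eqref{eq:B_matrix_intro} are precisely what enforces the reducedness condition $i_\ell\neq i_{\ell+1}$, so that the free matrix-product sum over intermediate indices collapses exactly onto $\isset{F}_{i,j}^{(k)}(d)$ with nothing double-counted and nothing omitted — this last point is underwritten by the uniqueness clause of Lemma~\ref{lemma:plane_switching}, and it is the only place where one must be attentive.
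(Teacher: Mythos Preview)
Your proposal is correct. The paper proceeds by induction on $d$, peeling off the \emph{last} factor via $\overline{\ismat{B}}(d+1,k)=\overline{\ismat{B}}(d,k)\,\ismat{B}((-1)^{d+2}k)$ and invoking Lemma~\ref{lemma:first_in_terms_of_funds} and Eq.~\eqref{eq:metric_condition} in the step---essentially your ``Alternative'' mirrored left-to-right---while your primary argument simply unrolls this induction into a single direct expansion using the same ingredients (the Kronecker factors enforcing reducedness, the bijection with $\isset{F}_{i,j}^{(k)}(d)$ via Lemma~\ref{lemma:plane_switching}, and the factorizations of $|\cdot|_\metric$ and $\genf{R}$).
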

\begin{proof}
The proof is by induction on $d$. The set $\isset{F}_{i, j}^{(k)}(1)$ only contains the word $A_{i, j}^{(k)}$. We therefore have
\begin{equation}
\matindsbar{B}{i}{j}(1, k; \lambda, z) =  \matinds{B}{i}{j}(k; \lambda, z) = (1 - \delta_{i, j}) z^{|A_{i , j}^{(k)}|_{\metric}} R_{i, j}^{(k)}(\lambda) = \sum_{w \in   \isset{F}_{i,j}^{(k)}(1)} z^{|w|_{\metric}} \genf{R}(e_i, w; \lambda).
\end{equation}
Let $k_d \defn (-1)^{d + 1} k$. By the induction hypothesis and Lemma~\ref{lemma:first_in_terms_of_funds},
\begin{align}
\matindsbar{B}{i}{j}(d + 1, k; \lambda, z) &= \sum_{\ell}\matindsbar{B}{i}{\ell}(d, k_d; \lambda, z)\, \matinds{B}{\ell}{j}(- k_d; \lambda, z) \nonumber\\
&=  \sum_{\ell} \sum_{w \in   \isset{F}_{i,\ell}^{(k)}(d)} z^{|w|_{\metric}} \genf{R}(e_i, w; \lambda) (1 - \delta_{\ell, j}) z^{|A_{\ell , j}^{(-k_d)}|_{\metric}} R_{\ell, j}^{(-k_d)}(\lambda) \nonumber\\
&= \sum_{\ell} (1 - \delta_{\ell, j}) \sum_{w \in   \isset{F}_{i,\ell}^{(k)}(d)}  z^{|w|_{\metric} + |A_{\ell , j}^{(-k_d)}|_{\metric}} \genf{R}\left(e_i, w A_{\ell , j}^{(-k_d)} ; \lambda \right) \nonumber\\
&=  \sum_{w \in   \isset{F}_{i,j}^{(k)}(d + 1)} z^{|w|_{\metric}} \genf{R}(e_i, w; \lambda) \label{eq:BBar_induction_last_line},
\end{align}
where we have used Eq.~\eqref{eq:metric_condition} in Eq.~\eqref{eq:BBar_induction_last_line}.
\end{proof}
We are now ready to prove Propositions~\ref{prop:A_matrix_properties} and \ref{prop:G_in_terms_of_A}.
\begin{proof}[Proof of Proposition~\ref{prop:A_matrix_properties}]
In the region $(\lambda, z) \in (0, 1] \times (0, 1]$, the non-negativity of $\ismat{K}(\lambda, z)$ immediately follows from the definitions of $|\cdot|_{\metric}$ and $R_{i, j}^{(k)}(\lambda)$. For the remainder of the proof, consider the region $(\lambda, z) \in (0, 1) \times (0, 1]$. Observe that, by Eq.~\eqref{eq:B_prod_mat_def}, we have
\begin{subequations}
\begin{align}
\ismat{K}^{2d} &=  \begin{pmatrix}
\overline{\ismat{B}}(2d, 1; \lambda, z)  & \zeromat \\
\zeromat & \overline{\ismat{B}}(2d, -1; \lambda, z)
\end{pmatrix} \quad d \geq 1 \\
\ismat{K}^{2d - 1} &=  \begin{pmatrix}
\zeromat & \overline{\ismat{B}}(2d - 1, 1; \lambda, z)  \\
\overline{\ismat{B}}(2d - 1, 1; \lambda, z) & \zeromat
\end{pmatrix} \quad d \geq 1.
\end{align}
\end{subequations}
The matrix $\ismat{B}(k; \lambda, z)$ has zeroes on its diagonal and strictly positive entries elsewhere; hence it is primitive. It follows that $\overline{\ismat{B}}(d, k; \lambda, z)$ has strictly positive entries for $d \geq 2$, so $\ismat{K}(\lambda, z)$ is irreducible with period 2. Next, define the double generating function
\begin{align}
    \genf{F}_{i, j}^{(k)}(\lambda, z) &\defn \sum_{n = 0}^{\infty} E_{e_i}[z^{|W_n|_{\metric} } \,\indic(\target(W_n) = j,\ \mathfrak{K}(W_n) = k )] \lambda^n   \\
    &= \sum_{d = 1}^{\infty}\sum_{w \in \isset{F}_{i,j}^{(k)}(d) } z^{|w|_{\metric}}   \genf{S}(e_i, w; \lambda), \label{eq:Gijk_simplified}
\end{align}
where $\indic(\cdot)$ is the indicator function. Writing $\genf{S}(x, y; \lambda) = \genf{R}(x, y; \lambda)\genf{S}(y, y;\lambda)$ and noting that $\genf{S}(y, y; \lambda) \geq 1$, we have that Lemma~\ref{lemma:B_in_terms_of_R} implies
\begin{equation} \label{eq:Bbar_mat_in_terms_of_S}
\matindsbar{B}{i}{j}(d, k; \lambda, z) \leq  \sum_{w \in   \isset{F}_{i,j}^{(k)}(d)} z^{|w|_{\metric}} \genf{S}(e_i, w; \lambda)  \quad d \geq 1.
\end{equation}
Combining Eqs.~\eqref{eq:Gijk_simplified} and \eqref{eq:Bbar_mat_in_terms_of_S} and using the fact that $\lambda \in (0, 1)$ gives
\begin{equation}
   \sum_{d = 1}^{\infty} [\ismat{K}(\lambda, z)^d]_{\ell, m} \leq \max_{i, j, k} \sum_{d = 1}^{\infty} \matindsbar{B}{i}{j}(d, k; \lambda, z)\leq  \max_{i, j, k} \genf{F}_{i, j}^{(k)}(\lambda, z) \leq \frac{1}{1 - \lambda} < \infty,
\end{equation}
which shows that the spectral radius of $\ismat{K}(\lambda, z)$ is strictly less than 1.
\end{proof}

\begin{proof}[Proof of Proposition~\ref{prop:G_in_terms_of_A}]
We have
\begin{equation} \label{eq:g_S}
\genf{G}_i(\lambda, z) =  \sum_{w \in \arrowset_i} z^{|w|_{\metric}}   \genf{S}(e_i, w; \lambda)
=  \genf{S}(e_i, e_i; \lambda) + \sum_{d = 1}^{\infty}\sum_{k, j} \sum_{w \in \isset{F}_{i,j}^{(k)}(d) } z^{|w|_{\metric}}   \genf{S}(e_i, w; \lambda).
\end{equation}
Applying Lemma~\ref{lemma:B_in_terms_of_R} gives, for $d \geq 1$,
\begin{align}
 \bar{\isvec{v}}\transpose(i) \, \ismat{K}(\lambda, z)^d  \,\bar{\isvec{s}}(\lambda) &= \sum_{k} \isvec{v}\transpose(i) \, \overline{\ismat{B}}(d, k; \lambda, z) \,\isvec{s}(\lambda) \nonumber \\
&= \sum_{k} \sum_{\ell, j} \vecinds{v}{\ell}(i)\, \matindsbar{B}{\ell}{j}(d, k; \lambda, z)  \,\vecinds{s}{j}(\lambda) \nonumber \\
&= \sum_{k}   \sum_{\ell , j} \sum_{w \in   \isset{F}_{\ell,j}^{(k)}(d)} \delta_{i, \ell}  \, z^{|w|_{\metric}} \genf{R}(e_\ell, w; \lambda) \,\genf{S}(e_j, e_j; \lambda)  \nonumber \\
&= \sum_{k, j} \sum_{w \in   \isset{F}_{i,j}^{(k)}(d)} z^{|w|_{\metric}} \,\genf{S}(e_i, w; \lambda). \label{eq:vkS_final}
\end{align}
Substituting Eq.~\eqref{eq:vkS_final} into Eq.~\eqref{eq:g_S} completes the proof.
\end{proof}

%%%%%%%%%%%%%%%%%%%%%%%%%%%%%%%%%%%%%%%%%%%%%%%%%%%%%
%%%%%%%%%%%%%%%%%%%%%%%%%%%%%%%%%%%%%%%%%%%%%%%%%%%%%
%%%%%%%%%%%%%%%%%%%%%%%%%%%%%%%%%%%%%%%%%%%%%%%%%%%%%

\appendix

\section{Supplemental proofs}
\label{sec:appendix_proofs}

\begin{proof}[Proof of Lemma \ref{lemma:plane_switching}]

Suppose first that two different reduced words $w, w'$ are in the same equivalence class. Then there is a sequence of words $w_0=w, w_1, w_2, \dots, w_j=w'$ with $j\ge 1$  such that each $w_{i}$ is obtained from $w_{i - 1}$ by a single operation, namely an application of Eq.~\eqref{eq:groupoid_letter_relations}. We will call such an application an ``up'' (``down'') type move if it increases (decreases) the number of generating elements in the word.

 We start by showing that each local sequence of the form $w_{i - 1}\overset{\text{up}}{\to}  w_i \overset{\text{down}}{\to} w_{i + 1}$ can be edited to a new sequence $w_{i - 1}, w_{i - 1, 1}, w_{i - 1, 2} \dots  w_{i - 1, \ell}, w_{i + 1}$ such that all down moves appear to the left of all up moves. If the down move does not include the generators that have been inserted by the up move, then the up and down moves commute so we can place the down move first. There are finitely many cases when the down move includes one of the generators created by the up move. One can check the cases one-by-one to ensure that they can be edited as required; we omit the details here.

The previous statement implies that the entire sequence $w_0, w_1, w_2, \dots w_j$ can be edited to a new sequence $\bar{w}_0=w, \bar{w}_1, \bar{w}_2, \dots \bar{w}_k=w'$ such that all down moves appear to the left of all up moves, and $k\ge 1$. But this cannot happen: since $w, w'$ are both reduced, $\bar{w}_0\to \bar{w}_1$ cannot be a down move and $\bar{w}_{k-1}\to \bar{w}_k$ cannot be an up move. This shows that an arrow cannot have two different reduced representations.

To finish the proof we need to show that each arrow has a reduced representation. For a given composition of finitely many generating elements, we can use the greedy algorithm to apply down steps until we cannot do it anymore. The resulting word  cannot include a product of the form $A_{i, j}^{(k)} A_{j, \ell}^{(k)}$, which means that it must be empty or of the form Eq.~\eqref{eq:generic_reduced_word}.
\end{proof}

\begin{proof}[Proof of Lemma~\ref{lemma:intermediate_words}]

The first statement follows directly from the structure of transition probability function~\eqref{eq:full_transition_probs} and Assumptions~\ref{ass:append_A_each_step} and \ref{ass:prob_form}.

To prove \ref{lemma:intermediate_words_sources}, by \ref{lemma:intermediate_words_markov} we may assume that $w_1 = e_i$ for some $1 \leq i \leq N$.  By Assumptions~\eqref{ass:append_A_each_step} and \ref{ass:prob_form} we have that $\source(W_n) = \source(W_{n + 1})$ for all $n \geq 0$ so we must have $\source(w_2) = i$ if $e_i$ and $w_2$ are connected with a positive probability path. On the other hand, if $\source(w_2) = i$ and $w_2\neq e_i$ then by Lemma~\ref{lemma:plane_switching} we have that $w_2$ can be written in the form
\begin{equation}
    w_2 = A_{i_1, i_2}^{(k)} A_{i_2, i_3}^{(-k)} A_{i_3, i_4}^{(k)} \cdots  A_{i_d, i_{d + 1}}^{((-1)^{d + 1} k)} \text{ such that } i_\ell \neq i_{\ell + 1} \text{ for all } 1 \leq \ell \leq d \text{ and } i_1 = i,
\end{equation}
for some $d \geq 1$. The partial products of this representation provide a positive probability path from $e_i$ to $w_2$ by Assumption~\ref{ass:probs_positive}. This completes the proof of \ref{lemma:intermediate_words_sources}.

We first prove \ref{lemma:intermediate_words_intermediate} in the special case when $w_1=e_i$, $w_2=A_{i, j}^{(k)} A_{j, \ell}^{(-k)}$. Let $v_1=w_1$, $v_2$, $\dots$, $v_{n}=w_2$ be a positive probability path for the Markov chain. We need to show that there is a $0<n'<n$ with $v_{n'}=A_{i, j}^{(k)} $. Let $\alpha = \max\{i < n : |v_{i}| = 1 \}$; this is well defined since $|v_i|$ changes by at most 1 in each step. This also shows that $|v_{k}|\geq 2$ for $\alpha< k \leq n$ and that the first generator in the reduced representation of $v_k$ is $v_\alpha$. Since this first generator is $A_{i, j}^{(k)}$ for $k=n$, this implies that $v_\alpha=A_{i, j}^{(k)} $.
To prove \ref{lemma:intermediate_words_intermediate} in the general case we can iterate this argument with the help of \ref{lemma:intermediate_words_markov}.
\end{proof}

\begin{lemma} \label{lemma:M_primitive}
The matrix $\ismat{M}(\lambda)$ defined in Eq.~\eqref{eq:M_mat_def} is primitive for all $N \geq 3$ and $\lambda \in (0, 1]$.
\end{lemma}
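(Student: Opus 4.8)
The plan is to read off the sign pattern of $\ismat{M}(\lambda)$ from Eq.~\eqref{eq:M_mat_def}, check that this pattern does not depend on $\lambda\in(0,1]$, and then settle primitivity by a graph‑theoretic argument. The first observation is that for $\lambda\in(0,1]$ every $R_{i,j}^{(k)}(\lambda)$ is strictly positive: the crude bound $R_{i,j}^{(k)}(\lambda)\ge \lambda\,P_{e_i}(T(0,A_{i,j}^{(k)})=1)=\lambda\,p_{i,j}^{(k)}$ coming from Eq.~\eqref{eq:R_fund_all} and Assumption~\ref{ass:probs_positive} already gives this. Hence $M(\lambda)_{(i,j,k)}^{(i',j',k')}>0$ exactly when one of the three structural cases in Eq.~\eqref{eq:M_mat_def} applies, and I would introduce the directed graph $\Gamma$ on the $2N(N-1)$ multi‑indices $(i,j,k)$ whose arcs are: (a) $(i,j,k)\to(i',j,k)$ for every $i'\notin\{i,j\}$ (first case); (b) $(i,j,k)\to(i',i,-k)$ for every $i'\neq i$ (second case); and (c) a self‑loop at every vertex, whose weight $\lambda\sum_{\ell\neq i}p_{i,\ell}^{(-k)}R_{\ell,i}^{(-k)}(\lambda)$ is positive precisely because $N\ge 3$ supplies at least one index $\ell\neq i$. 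Since $\ismat{M}(\lambda)$ has non‑negative entries, primitivity is equivalent to $\Gamma$ being strongly connected and aperiodic; the self‑loops (c) make $\Gamma$ aperiodic for free, so the whole content is strong connectivity.

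For strong connectivity I would isolate one composite move — call it a \emph{transfer} — consisting of at most one arc of type (a) followed by one arc of type (b): from $(i,j,k)$ one can reach $(i',i'',-k)$ for any $i''\neq j$ and any $i'\neq i''$ (if $i''=i$ use (b) directly; otherwise $i''\notin\{i,j\}$, so apply (a) then (b)). A transfer flips the plane index $k$, moves the target index to an arbitrary value other than the \emph{current} target index, and leaves the source index free (subject only to differing from the new target index). The remainder is a short counting argument using $N\ge 3$: two transfers reach any vertex $(a,b,k)$ in the same plane, by choosing the intermediate target index $i_1'$ outside the at‑most‑two‑element set $\{j,b\}$ and then landing the second transfer on $(a,b,k)$, which is legal since $b\neq i_1'$; and a vertex $(a,b,-k)$ in the opposite plane is reached by a single transfer when $b\neq j$, while if $b=j$ one first moves to some $(i^\ast,j^\ast,k)$ with $j^\ast\neq j$ by the previous step and then transfers once more (legal because $b=j\neq j^\ast$). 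Stringing these together shows that from an arbitrary vertex one can reach an arbitrary vertex, so $\Gamma$ is strongly connected, and $\ismat{M}(\lambda)$, being non‑negative, irreducible and with positive diagonal, is primitive (see e.g.\ \cite{seneta2006non}).

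I do not expect a serious obstacle here — the argument is entirely combinatorial — but two points require care. First, the range of $i'$ in the first case of Eq.~\eqref{eq:M_mat_def} should be read as $i'\notin\{i,j\}$ (forced by $p_{i,i'}^{(k)}$ being a genuine transition probability); the literal reading "$i'\ne j$'' would only add a harmless diagonal contribution, so the proof is unaffected. Second, one must track exactly which index is forbidden after each transfer so that $N\ge 3$ is invoked only where genuinely needed; indeed for $N=2$ there are no type‑(a) arcs and $\Gamma$ decomposes into two disjoint components, consistent with the reduction to the abelian case noted in Section~\ref{sec:markov_chain}, so the hypothesis $N\ge 3$ in the statement is sharp.
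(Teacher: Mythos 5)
Your proof is correct but takes a genuinely different route from the paper's. The paper's argument is a direct computation: it forms the $0$--$1$ sign-pattern matrix of $\ismat{M}(\lambda)$ (which, as you observe, is independent of $\lambda\in(0,1]$), computes the sign patterns of $\ismat{M}^2$ and $\ismat{M}^3$ by explicit Boolean products, and observes that every entry of $\ismat{M}^3$ is positive; this gives primitivity at once and even pins the primitivity exponent at $3$. You instead pass to the associated directed graph, factor primitivity into irreducibility plus aperiodicity, dispose of aperiodicity via the type-(c) self-loops, and establish strong connectivity by the explicit ``transfer'' construction. Your route is more transparent about the combinatorial mechanism and makes the role of $N\ge 3$ visible---it is precisely what supplies the type-(a) arcs and hence the freedom to choose an intermediate target avoiding two forbidden indices---whereas the paper's is shorter and sharper on the exponent. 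Two small slips worth fixing: in the two-transfer step, ``intermediate target index $i_1'$'' should read $i_1''$, since in your earlier notation the landing vertex of a transfer is $(i',i'',-k)$ with $i''$ the new target; and the self-loop weight is already positive for $N\ge 2$ (any $\ell\neq i$ works), so the parenthetical attributing that positivity to $N\ge 3$ slightly overstates---the self-loops are not where $N\ge 3$ enters.
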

\begin{proof}
To show that $\ismat{M}$ is primitive it is sufficient to show that there is an $n$ such that each entry of $\ismat{M}^n$ is strictly positive. Let $\Gamma(\ismat{Z})$ be the matrix obtained by replacing all the strictly positive entries of any matrix $\ismat{Z}$ with 1. By the assumption that $\lambda \in (0, 1]$, and by direct computation, we have
\begin{subequations}
\begin{align}
\Gamma(\ismat{M})_{(i, j, k)}^{(i', j', k')} &= \delta_{j, j'} \delta_{k, k'} + \delta_{j', i}(1 - \delta_{k', k}), \label{eq:M_primitive_proof_line1} \\
\Gamma(\ismat{M}^2)_{(i, j, k)}^{(i', j', k')} &= \delta_{j, j'} \delta_{k, k'} + (1 - \delta_{j, j'})(1 - \delta_{k, k'}) + \delta_{j', i}(1 - \delta_{k, k'}) + (1 - \delta_{j', i}) \delta_{k, k'}, \label{eq:M_primitive_proof_line2}\\
\Gamma(\ismat{M}^3)_{(i, j, k)}^{(i', j', k')} &= 1. \label{eq:M_primitive_proof_line3}
\end{align}
\end{subequations}
We remark that $N \geq 3$ is used between Eqs.~\eqref{eq:M_primitive_proof_line1} and \eqref{eq:M_primitive_proof_line2}.
\end{proof}

\begin{lemma} \label{lemma:sequence_converges_to_r}
For the sequence $\{\isvec{a}_n(\lambda)\}_{n \geq 0}$ defined in Eq.~\eqref{eq:a_seq_defn}, we have
\begin{equation} \label{eq:a_induction_reslt}
     (\isvec{a}_n(\lambda))_\ell \leq \sum_{m = 0}^{2^{n + 1} - 1}  P_{e_i}\left(T\left( 0, A_{i, j}^{(k)} \right) = m \right) \lambda^m  \quad n \geq 0.
\end{equation}
where $T(m, x)$ is defined in Eq.~\eqref{eq:T_stopping_time_def} and $\ell = (i, j, k)$.
\end{lemma}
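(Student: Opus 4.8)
The plan is to induct on $n$, comparing the deterministic iterates $\isvec{a}_n(\lambda)$ with suitable truncations of the generating functions $R_{i,j}^{(k)}(\lambda)$. Write $N_{i,j}^{(k)}(m)\defn P_{e_i}(T(0,A_{i,j}^{(k)})=m)$, so that $R_{i,j}^{(k)}(\lambda)=\sum_{m\ge 0}N_{i,j}^{(k)}(m)\lambda^m$, and for $n\ge 0$ let $\rho_n$ be the $2N(N-1)$-vector with entries $(\rho_n)_{i,j}^{(k)}(\lambda)\defn\sum_{m=0}^{2^{n+1}-1}N_{i,j}^{(k)}(m)\lambda^m$; this is exactly the right-hand side of the claimed inequality, so it suffices to show $\isvec{a}_n(\lambda)\le\rho_n(\lambda)$ coordinatewise, for each fixed $\lambda\in[0,1]$. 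Each $(\rho_n)_\ell$ is a partial sum of the sub-probability generating function $R_{i,j}^{(k)}$, so $\zerovec\le\rho_n(\lambda)\le\onevec$; moreover, since $\isvec{f}(\cdot,\lambda)$ is a vector of polynomials with non-negative coefficients and $\lambda\ge 0$, it is coordinatewise non-decreasing on $[\zerovec,\onevec]$ (this is the content of Eq.~\eqref{eq:f_non_decreasing}).

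The key algebraic input is the coefficient form of the fixed-point equation. By Proposition~\ref{prop:R_property_equations} the functions $R_{i,j}^{(k)}$ satisfy Eq.~\eqref{eq:fund_R_eqs}; equating coefficients of $\lambda^M$ on both sides yields $N_{i,j}^{(k)}(0)=0$ and, for $M\ge 1$,
\[
N_{i,j}^{(k)}(M)=p_{i,j}^{(k)}\indic_{\{M=1\}}+\sum_{m\neq i,j}p_{i,m}^{(k)}N_{m,j}^{(k)}(M-1)+\sum_{m\neq i}\sum_{\substack{a+b=M-1\\ a,b\ge 0}}p_{i,m}^{(-k)}N_{m,i}^{(-k)}(a)N_{i,j}^{(k)}(b).
\]
(The same recursion also follows from a first-step decomposition of $T(0,A_{i,j}^{(k)})$ as in the proof of Proposition~\ref{prop:R_property_equations}, but only the power-series identity is needed here.) All terms on the right are non-negative.

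For the base case $n=0$ one has $(\isvec{a}_0)_{i,j}^{(k)}=f_{i,j}^{(k)}(\zerovec,\lambda)=\lambda p_{i,j}^{(k)}=(\rho_0)_{i,j}^{(k)}(\lambda)$, with equality. For the inductive step, assume $\zerovec\le\isvec{a}_n(\lambda)\le\rho_n(\lambda)$. By monotonicity, $(\isvec{a}_{n+1})_{i,j}^{(k)}=f_{i,j}^{(k)}(\isvec{a}_n,\lambda)\le f_{i,j}^{(k)}(\rho_n,\lambda)$, and the right-hand side is a polynomial in $\lambda$. Since $\isvec{f}$ is \emph{quadratic}, the product term $\rho_n^{(m,i,-k)}\rho_n^{(i,j,k)}$ has degree at most $2(2^{n+1}-1)$, so after the overall factor $\lambda$ the whole polynomial has degree at most $2^{n+2}-1$. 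Expanding and collecting powers of $\lambda$, the coefficient of $\lambda^M$ in $f_{i,j}^{(k)}(\rho_n,\lambda)$ is obtained from the right-hand side of the displayed recursion for $N_{i,j}^{(k)}(M)$ by restricting every summation index to the range $\{0,\dots,2^{n+1}-1\}$; since all summands are non-negative, that coefficient is at most $N_{i,j}^{(k)}(M)$. Hence $f_{i,j}^{(k)}(\rho_n,\lambda)\le\sum_{M=0}^{2^{n+2}-1}N_{i,j}^{(k)}(M)\lambda^M=(\rho_{n+1})_{i,j}^{(k)}(\lambda)$, which closes the induction.

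The only point requiring care is the degree bookkeeping: it is precisely because $\isvec{f}$ has degree two that a truncation at level $2^{n+1}-1$ propagates to one at level $2^{n+2}-1$, matching the exponent $2^{n+1}-1$ in the statement. Everything else — extracting the coefficient recursion from Eq.~\eqref{eq:fund_R_eqs}, non-negativity of the summands, and the monotone-iteration comparison — is routine.
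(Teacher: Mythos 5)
Your proof is correct and follows essentially the same approach as the paper's: induction on $n$, using monotonicity of $\isvec{f}$ (Eq.~\eqref{eq:f_non_decreasing}) and a coefficient-by-coefficient comparison that exploits the degree-two structure to turn a truncation at $2^{n+1}-1$ into one at $2^{n+2}-1$. The one stylistic difference is that the paper re-derives the needed coefficient identity probabilistically, by partitioning $\{T(0,A_{i,j}^{(k)})=m\}$ on the value of $W_1$ and organizing the terms line by line (including a convolution bound via Eq.~\eqref{eq:double_sum_upper_bound}), whereas you extract the coefficient recursion once and for all from the already-established functional equation of Proposition~\ref{prop:R_property_equations} and then reduce the induction step to the transparent observation that restricting summation ranges in a sum of non-negative terms only decreases it; this is a slightly cleaner packaging of the same argument, and it legitimately relies only on results (Proposition~\ref{prop:R_property_equations}, Remark~\ref{remark:R_properties_immediate}) established independently of this lemma.
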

\begin{proof}
The proof is by induction on $n \geq 0$. We will write $t_{i, j}^{(k)}(\alpha) \defn P_{e_i}\left(T\left( 0, A_{i, j}^{(k)} \right) = \alpha \right)$ to simplify the notation. The base case $n = 0$ is immediate since $t_{i, j}^{(k)}(1) = p_{i, j}^{(k)}$ so $(\isvec{a}_0(\lambda))_\ell = p_{i, j}^{(k)} \lambda$. By the induction hypothesis and Eq.~\eqref{eq:f_non_decreasing}, we have
\begin{subequations} \label{eq:anplus1}
\begin{align}
    (\isvec{a}_{n + 1}(\lambda))_\ell &\leq \lambda p_{i, j}^{(k)} \label{eq:an_line1}\\
    &+ \sum_{\alpha = 0}^{2^{n + 1} - 1} \sum_{m \neq i, j }   p_{i, m}^{(k)} t_{m, j}^{(k)}(\alpha) \lambda^{\alpha + 1} \label{eq:an_line2}  \\
    &+ \sum_{\alpha = 0}^{2^{n + 1} - 1} \sum_{\beta = 0}^{2^{n + 1} - 1} \sum_{m \neq i }  p_{i, m}^{(-k)} t_{m, i}^{(-k)}(\alpha) t_{i, j}^{(k)}(\beta) \lambda^{\alpha + \beta + 1}. \label{eq:an_line3}
\end{align}
\end{subequations}
We show that the right side of Eq.~\eqref{eq:anplus1} can be bounded above by
\begin{align}
\sum_{m = 0}^{2^{n + 2} - 1}  P_{e_i}\left(T\left( 0, A_{i, j}^{(k)} \right) = m \right) \lambda^m.
\end{align}
The term in \eqref{eq:an_line1} is bounded by
\begin{align}
\sum_{m = 0}^{2^{n + 2} - 1}  P_{e_i}\left(T\left( 0, A_{i, j}^{(k)} \right) = m , W_1= A_{i, j}^{(k)}\right) \lambda^m,
\end{align}
and the term in line~\eqref{eq:an_line2} is bounded by
\begin{align} \label{eq:line2_bound}
    \sum_{\alpha = 0}^{2^{n + 2} - 1} \sum_{m \neq i, j }   P_{e_i}\left(T\left( 0, A_{i, j}^{(k)} \right) = \alpha ,  W_1 = A_{i, m}^{(k)} \right)  \lambda^{\alpha} .
\end{align}
Finally, we consider line~\eqref{eq:an_line3}. By Lemma~\ref{lemma:intermediate_words}, we can write
\begin{equation} \label{eq:t_convolution}
     P_{e_m}\left(T\left( 0, A_{m, i}^{(-k)} A_{i, j}^{(k)}  \right) = \beta \right) =  \sum_{\alpha = 0}^{\beta} t_{m, i}^{(-k)}(\alpha)\, t_{i, j}^{(k)}(\beta - \alpha)
\end{equation}
with $t_{i, j}^{(k)}(0) = 0$. For non-negative numbers $c_{\alpha, \beta}$ indexed by $0 \leq \alpha \leq n$, $0 \leq \beta \leq n$, we have
\begin{equation} \label{eq:double_sum_upper_bound}
    \sum_{\alpha = 0}^{n} \sum_{\beta = 0}^{n} c_{\alpha, \beta} \leq \sum_{r = 0}^{2 n } \sum_{\alpha + \beta = r} c_{\alpha, \beta}.
\end{equation}
Therefore, applying Eq.~\eqref{eq:double_sum_upper_bound} then Eq.~\eqref{eq:t_convolution} results in
\begin{equation}
\sum_{\alpha = 0}^{2^{n + 1} - 1} \sum_{\beta = 0}^{2^{n + 1} - 1} t_{m, i}^{(-k)}(\alpha) t_{i, j}^{(k)}(\beta) \lambda^{\alpha + \beta} \leq \sum_{r = 0}^{2^{n + 2} - 2 } P_{e_m}\left(T\left( 0, A_{m, i}^{(-k)} A_{i, j}^{(k)}  \right) = r \right) \lambda^r .
\end{equation}
Hence, the quantity on line~\eqref{eq:an_line3} can be bounded above by
\begin{align} \label{eq:line3_bound}
    \sum_{\alpha = 0}^{2^{n + 2} - 1} \sum_{m \neq i, j }   P_{e_i}\left(T\left( 0, A_{i, j}^{(k)} \right) = \alpha , W_1 = A_{i, m}^{(-k)} \right)  \lambda^{\alpha},
\end{align}
which completes the induction step and hence the proof.
\end{proof}

\section*{Acknowledgments}

The authors are grateful to Steven Lalley for valuable discussions and for his assistance with the proof of Proposition~\ref{prop:transience}.  B.V.~was partially supported by the NSF award DMS-1712551.

\bibliographystyle{siam}
\bibliography{my_bib.bib}

\end{document}